\documentclass[12pt]{amsart}
\usepackage[top=3.0cm, bottom=3.0cm, left=2.5cm, right=2.5cm]{geometry}
\usepackage{fullpage}
\newtheorem{theorem}{Theorem}[section]
\usepackage{framed}
\usepackage{xcolor}
\usepackage{tikz}
\newtheorem{lemma}[theorem]{Lemma}
\newtheorem{proposition}[theorem]{Proposition}
\newtheorem{corollary}[theorem]{Corollary}
\usepackage{amsmath}
\usepackage{graphicx} 
\usepackage{amsfonts}
\usepackage{amssymb}
\newtheorem{definition}[theorem]{Definition}
\newtheorem{example}[theorem]{Example}

\newtheorem{remark}[theorem]{Remark}
\numberwithin{equation}{section}
\usepackage{hyperref,xcolor}
\hypersetup{colorlinks=true, citecolor=cyan, linkcolor=cyan, urlcolor=cyan}

\usepackage{etoolbox}
\usepackage[font=small,labelfont=bf]{caption}
\usepackage{titletoc}
\begin{document}
\author[Beslikas]{Athanasios Beslikas}
\address{Doctoral School of Exact and Natural Studies,
Institute of Mathematics,
Faculty of Mathematics and Computer Science,
Jagiellonian University,
\L{}ojasiewicza 6, PL30348, Cracow, Poland}
\email{athanasios.beslikas@doctoral.uj.edu.pl}

\title{Composition operators and Rational Inner Functions on the bidisc: A geometric approach}
\subjclass[2020]{Primary 32A08, 32A37}
\keywords{Rational inner functions, Bergman spaces, composition operators, level sets, Puiseux factorization}

\begin{abstract} We study composition operators acting on 
the weighted Bergman spaces on the bidisc, i.e.
$C_{\Phi}:A^2_{\beta}(\mathbb{D}^2)\to A^2_{\beta}(\mathbb{D}^2)$ where $\Phi$ is induced by rational inner functions (RIFs) or a RIF and a smooth function (mixed case). Our approach is geometric. Our main result is a uniform criterion for all $\beta\in(-1,0]$ that can be summarized as follows:
Boundedness of the composition operator is equivalent to transversal intersection of the level sets for non-smooth symbols, under the assumption that if any tangential intersection occurs on the singularity it must be of high order.
This extends the characterization of Bayart-Kosi\'nski to the non-smooth self maps of the bidisc. To reach our conclusions, we utilize results obtained by Anderson, Bergqvist, Bickel, Cima and Sola on Clark measures associated to RIFs and Puiseux factorizations. 
\end{abstract}

\maketitle
\section{Introduction} Let $$\mathbb{D}^2:=\bigg\{z=(z_1,z_2)\in\mathbb{C}^2:|z_1|<1,|z_2|<1\bigg\}$$ denote the unit bidisc in two complex variables and $$\mathbb{T}^2:=\bigg\{\zeta=(\zeta_1,\zeta_2)\in\mathbb{C}^2:|\zeta_1|=1,|\zeta_2|=1\bigg\}$$ denote the bitorus, the \textit{distinguished boundary} of the bidisc. 
Consider $f\in \mathcal{O}(\mathbb{D}^2,\mathbb{C}),$ a holomorphic function on the bidisc,  and $\Phi\in\mathcal{O}(\mathbb{D}^2,\mathbb{D}^2)$ a holomorphic self map of the said domain. The composition operator
$$C_{\Phi}(f):=(f\circ\Phi)(z_1,z_2),\quad (z_1,z_2)\in\mathbb{D}^2,$$
defines a new holomorphic function on the bidisc.

Let us for a moment be more abstract. The first problem that one studies from a functional-analytic point of view is when this operator defines a bounded linear operator on a Banach or Hilbert space $X,$ i.e. for what self maps $\varphi$ of the domain $\Omega\subset\mathbb{C}^n,$ the inequality

$$\|C_{\varphi}(f)\|_{X}\leq C\|f\|_{X},$$
is satisfied.
In the case where $X=H^2(\mathbb{D)}$ or $X=A^2_{\beta}(\mathbb{D})$ the Hardy and (weighted) Bergman spaces of the unit disc, the problem is resolved via the celebrated \textit{Littlewood subordination principle}; all self maps $\varphi\in\mathcal{O}(\mathbb{D},\mathbb{D})$ induce bounded composition operators in both spaces.

 Inspired by the work of Shapiro (see the book \cite{Shapiro} for a comprehensive study of composition operators in one complex variable), who built the geometric foundation in the study of composition operators on the one-dimensional setting, this work pursues a geometric interpretation of the problem in two complex variables. In one variable, understanding the geometry of the self map $\varphi\in\mathcal{O}(\mathbb{D},\mathbb{D}),$ leads to results on compactness of the composition operator in Hardy and Bergman spaces. As an example that highlights this geometrical aspect of the problem in one complex variable, Shapiro states in his seminal book \cite{Shapiro}, page 33, the following intuitive description of compactness of the composition operator;

\begin{center} \textit{`` $C_{\varphi}$ is compact \\if and only if $\varphi(z)$ does not get too close to the unit circle too often."}
\end{center}

More recently, mathematicians have turned their attention to the polydisc $\mathbb{D}^n$ and the action of the composition operator on Banach and Hilbert spaces defined on this domain. The problem of characterizing the symbols $\Phi$ that induce bounded composition operator in the Hardy and Bergman spaces of the bidisc was first considered in the work of Koo, Stessin and Zhu \cite{Koo}. Later, Bayart partially corrected the previous results and his paper \cite{Bayart1} together with the more recent paper of Kosi\'nski  \cite{CompositionsKosinski}, led to a characterization of $\mathcal{C}^2(\overline{\mathbb{D}}^2)-$smooth self maps that induce bounded composition operators on the Hardy and Bergman space of the bidisc. This characterization is often reffered to as the \textit{first-order condition}.

All of the studies are technical in nature, a testament of the difficulty of the problem in several complex variables. The higher dimensional setting restricts our intuition on the geometry of the self maps of the bidisc and how these mappings behave when their values approach the  bitorus. In this paper, our goal is to study the self maps of the bidisc with singular behavior, appearing in the papers \cite{Me, Me2}, but this time we focus on the geometry of the self maps. Specifically, for each coordinate function we will picture its level sets on $\mathbb{T}^2,$ compare them, and collect results based on this comparison. Our goal is to relate our results to the \textit{first-order condition.}

For convenience of the reader, let us state this result.

\begin{theorem} (First-order condition) Let $\Phi \in \mathcal{O}(\mathbb{D}^2,\mathbb{D}^2)\cap \mathcal{C}^2(\overline{\mathbb{D}^2})$. Then the composition operator $C_{\Phi}:A^2_{\beta}(\mathbb{D}^2)\to A^2_{\beta}(\mathbb{D}^2) $ is bounded, if and only if for all $\zeta\in \mathbb{T}^2$ such that $\Phi(\zeta)\in \mathbb{T}^2,$ the derivative $d_{\zeta} \Phi$ is invertible. 
\label{characterization}
\end{theorem}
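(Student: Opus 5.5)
The plan is to reduce boundedness to a Carleson-measure condition and then analyze $\Phi$ near the finitely many (or compact set of) boundary points $\zeta\in\mathbb{T}^2$ with $\Phi(\zeta)\in\mathbb{T}^2$. The general principle (as in Koo--Stessin--Zhu and Bayart) is that $C_\Phi$ is bounded on $A^2_\beta(\mathbb{D}^2)$ if and only if the pullback measure $\mu_\Phi=(dA_\beta\otimes dA_\beta)\circ\Phi^{-1}$ is a Carleson measure for $A^2_\beta(\mathbb{D}^2)$. By the product structure, testing on products of Carleson boxes suffices here because $\Phi$ is smooth. Concretely, I will aim to show
$$\mu_\Phi\big(S(\eta_1,h_1)\times S(\eta_2,h_2)\big)\lesssim (h_1h_2)^{2+\beta}$$
uniformly in $\eta\in\mathbb{T}^2$ and small $h_1,h_2>0$. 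Away from the set $K=\{\zeta\in\overline{\mathbb{D}}^2:\Phi(\zeta)\in\mathbb{T}^2\}\cap\mathbb{T}^2$, one of the coordinates of $\Phi$ stays uniformly inside the disc, and there the estimate reduces to the one-variable Littlewood subordination. So everything localizes to neighborhoods of points of $K$. Points where $\Phi$ touches $\mathbb{T}^2$ from inside $\mathbb{D}^2$ or on $\partial\mathbb{D}^2\setminus\mathbb{T}^2$ are excluded by the maximum principle and the Julia lemma, since $|\phi_j|<1$ inside.

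For sufficiency, suppose $d_\zeta\Phi$ is invertible at every $\zeta\in K$. By the inverse function theorem applied to the $\mathcal C^2$ extension, $\Phi$ is a local $\mathcal C^1$ diffeomorphism of a neighborhood $U$ of $\zeta$ in $\overline{\mathbb{D}}^2$, with Jacobian bounded above and below. The Julia--Carathéodory lemma in each variable shows that the normal derivatives of $\phi_j$ at $\zeta$ are positive multiples of the outward normals. Hence $1-|\phi_j(z)|\gtrsim$ a linear combination of $1-|z_1|$ and $1-|z_2|$, and $\Phi^{-1}$ of a product box is contained in a comparable product box, up to a linear change of variables. Combined with the change of variables formula and the bounded Jacobian, this gives the Carleson estimate. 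One then patches finitely many such neighborhoods by compactness of $K$.

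For necessity, suppose $d_\zeta\Phi$ is singular at some $\zeta\in K$; after rotations take $\zeta=\Phi(\zeta)=(1,1)$. The first step is to use Julia's lemma, which makes $\partial\phi_j/\partial z_k(\zeta)\ge 0$. Singularity then forces the matrix of these nonnegative derivatives to have rank $\le1$: either a row vanishes or the two rows are proportional. The second step is to test $C_\Phi$ on normalized reproducing kernels $k_w$ with $w\to(1,1)$ along a suitable curve. Using the second-order Taylor expansion of $\Phi$ (here $\mathcal C^2$ is needed), one shows that $\Phi$ maps a region of measure $\gg (h_1h_2)^{2+\beta}$ into a small box $S(1,h_1)\times S(1,h_2)$. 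Equivalently, $\|C_\Phi k_w\|/\|k_w\|\to\infty$.

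This degenerate case is the main obstacle. One must control the second-order terms so that the degeneracy is not compensated: the image set must hug the bitorus in a way that concentrates measure. I would handle it by splitting into the case of a zero row and the case of proportional rows, and computing explicitly in each case with a parabolic approach region.
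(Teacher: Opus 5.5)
The paper does not prove this statement. It quotes it from Bayart and Kosi\'nski and only sketches the equivalence (1)$\iff$(2) in the transversality proposition, so your proposal has to stand on its own.

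\textbf{Necessity.} This half is viable, but the case split needs fixing.
A zero row never occurs: Julia--Carath\'eodory applied to the diagonal slice $\lambda\mapsto\phi_j(\lambda\zeta_1,\lambda\zeta_2)$ gives $\zeta_1\partial_1\phi_j(\zeta)+\zeta_2\partial_2\phi_j(\zeta)\neq0$. After normalizing $\zeta=\Phi(\zeta)=(1,1)$, a singular $d_\zeta\Phi$ falls into one of two cases.
\begin{enumerate}
\item It has a common zero column. Then the boundary slices are constant, say $\Phi(\cdot,1)\equiv(1,1)$. The set $\{|z_1|<1/2\}\times\{|z_2-1|<h\}\subset\Phi^{-1}(S((1,1),Ch))$ has $V_\beta\approx h^{2+\beta}\gg h^{4+2\beta}$.
\item It has rows $(a,b)$ and $c(a,b)$ with $a,b,c>0$. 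Then the parabolic set $\{|a(z_1-1)+b(z_2-1)|<h,\ |z_1-1|<\sqrt{h}\}$ is mapped into a box of size $Ch$ by the $\mathcal{C}^2$ Taylor expansion, and it has $V_\beta\gtrsim h^{2\beta+7/2}\gg h^{2\beta+4}$.
\end{enumerate}
These are the analogues of the $A_\delta$-sets in the paper's exceptional-line and smooth-tangency theorems (the latter with $M=2$).

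\textbf{The gap: sufficiency for $\beta<0$.} Note that $(-1,0]$ is exactly the range in which the paper invokes the theorem.
Local invertibility and the change of variables formula with $|J_\Phi|\approx1$ control Lebesgue volume only. The measure $dV_\beta$ carries the product weight $(1-|z_1|^2)^\beta(1-|z_2|^2)^\beta$, and $\Phi$ does not transport it to the corresponding weight in the target. To run your argument you therefore need $\prod_k(1-|z_k|^2)^\beta\lesssim\prod_j(1-|\phi_j(z)|^2)^\beta$ on $\Phi^{-1}(S)$.

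Your lower bound for $1-|\phi_j|$ should come from Schwarz--Pick on the slices $\lambda\mapsto\phi_j(\lambda,z_2)$, using that $|\phi_j(0,z_2)|$ stays away from $1$ for $z_2$ near $\zeta_2$ when $\partial_1\phi_j(\zeta)\neq0$. It should not come from a Taylor expansion, whose tangential quadratic terms are not controlled by $1-|z_k|$. Combined with a permutation $\sigma$ such that $\partial_{\sigma(j)}\phi_j(\zeta)\neq0$ for $j=1,2$, it gives $\prod_j(1-|\phi_j(z)|)\gtrsim\prod_k(1-|z_k|)$ near $\zeta$. That closes the argument for $\beta\ge0$.

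For $\beta<0$ the inequality you need is the reverse one, and it is false. The map $\Phi=((z_1+z_2)/2,z_1)$ satisfies the first-order condition, yet at $z=(1-t,1-s)$ with $s\ll t$ one has $\prod_j(1-|\phi_j(z)|)\approx t^2/2\gg ts$. The box estimate does hold for this $\Phi$, but only after integrating and using the curvature of $\mathbb{T}$: $|\phi_1-1|<h_1$ forces both $z_j$ into boundary caps of depth $\approx h_1$ and width $\approx\sqrt{h_1}$. An integral argument of this kind is what the cited works have to supply, and your sketch does not contain it. An alternative would be a Hardy-space bound interpolated against the easy case $\beta=0$.

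\textbf{Smaller points.}
\begin{enumerate}
\item The claim that $\Phi^{-1}$ of a box lies in the linear preimage of a comparable box fails for eccentric boxes. If $h_1\ll h_2^2$, the quadratic remainder of size $h_2^2$ swamps $h_1$, so work with the exact local inverse instead.
\item Testing on boxes is a general fact for $\beta>-1$, not a consequence of smoothness. For $\beta=-1$, rectangles do not suffice for general measures (Chang's condition on open sets is needed), so your reduction does not reach the Hardy space.
\item Near points of $\partial\mathbb{D}^2$ where only $\phi_2$ is unimodular, the bound $V_\beta(\{|\phi_2-\eta_2|<h_2\}\cap U)\lesssim h_2^{2+\beta}$ is not literally Littlewood subordination. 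The subordination constants of the one-variable slices are not uniformly bounded there, so it needs the same local Julia--Carath\'eodory analysis.
\end{enumerate}
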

One could reformulate the first-order condition geometrically as follows (see Proposition \ref{transverse} for a proof of this fact).
\begin{proposition}\label{transverse}\textbf{(First-order condition is equivalent to transversality)}
Let $\Phi=(\phi,\psi)\in \mathcal O(\mathbb D^2,\mathbb D^2)\cap \mathcal{C}^2(\overline{\mathbb D^2}),$
and consider all $\zeta\in \mathbb T^2$ such that
$\Phi(\zeta)\in \mathbb T^2.$
Then the following are equivalent:
\begin{enumerate}
    \item The derivative $d_{\zeta}\Phi$ is invertible for all $\zeta\in\mathbb{T}^2$ such that $\Phi(\zeta)\in\mathbb{T}^2$.
    \item The boundary level curves
    $\mathcal{C}_{\phi(\zeta)}(\phi)\quad \mathrm{and}\quad
    \mathcal{C}_{\psi(\zeta)}(\psi)$
    are smooth for all $\zeta\in\mathbb{T}^2$ such that $\Phi(\zeta)\in\mathbb{T}^2$ and intersect transversely there.
    \item $C_{\Phi}:A^2_{\beta}(\mathbb{D}^2)\to A^2_{\beta}(\mathbb{D}^2)$ is bounded.
\end{enumerate}
\end{proposition}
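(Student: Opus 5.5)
The equivalence (1)$\Leftrightarrow$(3) is exactly Theorem \ref{characterization}, so the real content is (1)$\Leftrightarrow$(2). That step is a linear-algebra statement about the real derivative of $\Phi$ restricted to the torus at a point $\zeta$ with $\Phi(\zeta)\in\mathbb{T}^2$.

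The plan rests on two standard facts. First, if $\zeta\in\mathbb{T}^2$ and $\phi(\zeta)\in\mathbb{T}$, then $\zeta$ is a boundary maximum of $|\phi|$. Hence, by the Hopf lemma or Julia--Carathéodory applied in each variable separately, the partial derivatives satisfy $\zeta_j\partial_j\phi(\zeta)=\lambda_j\phi(\zeta)$ with $\lambda_j\ge 0$. Concretely, the function $t\mapsto\phi(\zeta_1 e^{s},\zeta_2)$ for real $s\le 0$ maps into the disc and reaches the circle at $s=0$. The same holds for $\psi$ with constants $\mu_j\ge0$.

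Second, parametrize $\mathbb{T}^2$ near $\zeta$ by $(\theta_1,\theta_2)\mapsto(\zeta_1e^{i\theta_1},\zeta_2e^{i\theta_2})$. On the torus, $\phi$ takes values in $\overline{\mathbb D}$ and $\arg\phi$ is defined near $\zeta$. The level curve $\mathcal{C}_{\phi(\zeta)}(\phi)=\{\eta\in\mathbb{T}^2:\phi(\eta)=\phi(\zeta)\}$ is locally cut out by the real function $\theta\mapsto\arg\phi$, whose gradient at $\zeta$ is $(\lambda_1,\lambda_2)$. To see this, note that
\[
\partial_{\theta_j}\phi=i\zeta_j\partial_j\phi=i\lambda_j\phi(\zeta),
\]
and $|\phi|$ has vanishing $\theta$-derivative at a maximum.

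With these facts, the complex Jacobian at $\zeta$ becomes
\[
d_\zeta\Phi=\begin{pmatrix}\lambda_1\phi(\zeta)/\zeta_1 & \lambda_2\phi(\zeta)/\zeta_2\\ \mu_1\psi(\zeta)/\zeta_1&\mu_2\psi(\zeta)/\zeta_2\end{pmatrix},
\]
so that
\[
\det d_\zeta\Phi=\frac{\phi(\zeta)\psi(\zeta)}{\zeta_1\zeta_2}\,(\lambda_1\mu_2-\lambda_2\mu_1).
\]
Therefore $d_\zeta\Phi$ is invertible if and only if the vectors $(\lambda_1,\lambda_2)$ and $(\mu_1,\mu_2)$ are linearly independent. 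These vectors are exactly the gradients of the defining functions $\arg\phi$ and $\arg\psi$ of the two level curves in the $\theta$-chart.

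Now I treat the two directions separately.

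\emph{(1)$\Rightarrow$(2).} Linear independence forces both gradients to be nonzero. By the implicit function theorem, which applies since the defining functions are $\mathcal C^1$ on the torus because $\Phi\in\mathcal C^2$, both curves are smooth near $\zeta$, and their tangent lines, being the orthogonal complements of independent vectors, differ; this is transversality.

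\emph{(2)$\Rightarrow$(1).} Interpret ``smooth level curve'' as a regular level set, meaning the defining gradient is nonzero at $\zeta$. Transversality then means the normals are independent, so the determinant is nonzero.

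The main obstacle is this last direction. A level set can be a smooth curve even where the gradient vanishes; for example $\arg\phi$ could vanish to second order in a way that still yields a smooth zero set. I would handle this by fixing the convention, in the definition of $\mathcal C_{\phi(\zeta)}(\phi)$, that smoothness at $\zeta$ means regularity. If that convention is not available, I would argue directly: if $(\lambda_1,\lambda_2)=0$, then $\partial\phi(\zeta)=0$ and hence $d_\zeta\Phi$ has a zero row. I would then exclude this case by showing it contradicts the smoothness-plus-transversality hypothesis in the regular sense. A useful input here is that $\lambda_1+\lambda_2>0$ whenever $\phi$ is nonconstant, by the Julia lemma along the diagonal direction, so degeneracy can only come from the linear-dependence case, which is exactly tangency.
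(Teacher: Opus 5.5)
Your proposal is correct and is essentially the paper's argument: (1)$\Leftrightarrow$(3) is Theorem~\ref{characterization}, and (1)$\Leftrightarrow$(2) comes from the polar-coordinate identity $D(u,v)=\mathrm{diag}(1/\alpha_1,1/\alpha_2)\,d_\zeta\Phi\,\mathrm{diag}(\zeta_1,\zeta_2)$, which is your $\partial_{\theta_j}\arg\phi=\zeta_j\partial_j\phi(\zeta)/\phi(\zeta)=\lambda_j$, combined with the implicit function theorem, and both you and the paper read the boundary level curve as the zero set of the angular function $u=\arg\phi-\arg\phi(\zeta)$ on $\mathbb{T}^2$. Your Julia--Carath\'eodory input (the $\lambda_j\ge 0$ are real and $\lambda_1+\lambda_2>0$) makes explicit what the paper's sketch builds in by declaring $u$ real-valued, and it shows the angular gradients never vanish, so the smoothness clause in (2) is automatic and the regular-versus-smooth worry you raise does not actually arise.
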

Here, by \textit{transversality} we mean that the curves intersect at a strictly positive angle.
The target of the present study is to prove a geometric principle for boundedness of the composition operator in the spirit of the compactness criterion of Shapiro, that is 

\begin{center}\textit{ `` The composition operator $C_{\Phi}$ is bounded on the Bergman space of the bidisc,\\ if and only if the coordinate functions touch the bitorus transversally".}
\end{center}
This, as we saw, is true for smooth symbols, but does this result extend to non-smooth cases?

\subsection{Structure of the paper} 
 In the next two subsections \ref{Bergman} and \ref{RIFs} of the introduction, we provide the setting in which we will study our problem. In Section \ref{results}, we state explicitly our main results that contribute in the study of composition operators. In Section \ref{tools}, we provide the necessary lemmata-theorems-propositions that we shall invoke in the duration of the paper. We split this section in 3 smaller ones for convenience of the reader. Then, we provide the proofs of the main results in Section \ref{proofs}. In Section \ref{examples} we present four concrete examples that illustrate several of our main results in detail.  

\subsection{Bergman spaces on the bidisc and Carleson measures}\label{Bergman}
Among the most natural spaces of holomorphic functions are the (weighted) Bergman spaces in a domain. Our focus will be the polydisc $\mathbb{D}^n,$ and particularly the bidisc $\mathbb{D}^2$. For $\beta\ge-1,$ the \textit{weighted Bergman spaces} are 
$$A^2_{\beta}(\mathbb{D}^2):=\Bigg\{f\in \mathcal{O}(\mathbb{D}^2):\int_{\mathbb{D}^2}|f(z_1,z_2)|^2(1-|z_1|^2)^{\beta}(1-|z_2|^2)^{\beta}dV(z_1,z_2)<+\infty\Bigg\}.$$ Here, $dV(z_1,z_2)=dA(z_1)\times dA(z_2),$ where $dA(z)=\frac{1}{\pi}dxdy,\quad z=x+iy\in\mathbb{D},$ denotes the \textit{normalized Lebesgue measure} on the unit disc. For the sake of notational clarity, $dV_{\beta}$ will denote the radially weighted volume measure.

Whenever $\beta=-1$ we recover the Hardy spaces $A^2_{-1}(\mathbb{D}^2)=H^2(\mathbb{D}^2),$ and for $\beta=0$ we obtain the classical Bergman space $A^2_0(\mathbb{D}^2)=A^2(\mathbb{D}^2).$ The main tool in the paper is the characterization of \textit{Carleson measures} for the weighted Bergman spaces. Carleson measures for the radially weighted Bergman spaces have been fully characterized, thus providing us with a sufficient and necessary condition for the boundedness of the composition operator. Before we state the related Lemma, let us recall some definitions. Let $\tilde{\delta}=(\delta_1,\delta_2)\in (0,1)^2$ and $\zeta=(\zeta_1,\zeta_2)\in\mathbb{T}^2.$ The \textit{two-dimensional Carleson box} is the set $$S(\zeta,\tilde{\delta})=\{z\in\mathbb{D}^2:|z_j-\zeta_j|<\delta_j, \quad j=1,2\}.$$ The characterization of the Carleson measures that can be found in \cite{carleson} gives us for free a straightforward criterion for the boundedness of the composition operator $C_{\Phi}$ on $A^2_{\beta}(\mathbb{D}^2),$ that is
\begin{lemma}
Let $\Phi:\mathbb{D}^2\to \mathbb{D}^2$ be a holomorphic self-map of the bidisc. The composition operator $C_{\Phi}: A^2_{\beta}(\mathbb{D}^2)\to A^2_{\beta}(\mathbb{D}^2)$ is bounded if and only if there is a constant $C>0$ such that for every $\tilde{\delta} \in (0,2)^2$ and $\zeta\in\mathbb{T}^2$:
$$V_{\beta}(\Phi^{-1}(S(\zeta,\tilde{\delta})))\leq CV_{\beta}(S(\zeta,\tilde{\delta})).$$
\end{lemma}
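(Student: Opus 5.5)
The plan is to reduce the boundedness question to a Carleson-measure question via the change-of-variables formula, and then invoke the characterization of Carleson measures for $A^2_{\beta}(\mathbb{D}^2)$ from \cite{carleson}. First write
$$\|C_{\Phi}f\|^2_{A^2_\beta}=\int_{\mathbb{D}^2}|f\circ\Phi|^2\,dV_\beta=\int_{\mathbb{D}^2}|f|^2\,d\mu_\Phi,\qquad \mu_\Phi:=V_\beta\circ\Phi^{-1},$$
where $\mu_\Phi$ is the pushforward of $dV_\beta$ under $\Phi$. This identity holds for nonnegative measurable functions by the abstract change-of-variables formula for image measures, so it applies to $|f|^2$ with no regularity assumption on $\Phi$. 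Consequently $C_\Phi$ is bounded on $A^2_\beta(\mathbb{D}^2)$ if and only if $\mu_\Phi$ is a Carleson measure for $A^2_\beta(\mathbb{D}^2)$, that is, if and only if there is $C>0$ with $\int|f|^2\,d\mu_\Phi\le C\|f\|^2_{A^2_\beta}$ for all $f$.

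Next, quote the characterization from \cite{carleson}. For the radially weighted Bergman spaces on the polydisc with $\beta>-1$, a positive finite Borel measure $\mu$ on $\mathbb{D}^2$ is Carleson if and only if $\mu(S(\zeta,\tilde\delta))\le C\,V_\beta(S(\zeta,\tilde\delta))$ uniformly over all two-parameter Carleson boxes. Substituting $\mu=\mu_\Phi$ and using $\mu_\Phi(S)=V_\beta(\Phi^{-1}(S))$ gives exactly the stated condition.

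The one point needing care, and the main obstacle, is making sure the quoted theorem is the product-box (rectangle) version, as opposed to a Chang-type condition over arbitrary open sets. For Bergman spaces with $\beta>-1$, rectangles suffice: the Bergman box condition is equivalent to uniformly bounded averages over Bergman-metric polydiscs, and the reproducing kernel test gives necessity. I would cite this directly. Necessity can also be checked by hand by testing against the normalized kernels $k_w(z)=\prod_j(1-|w_j|^2)^{(2+\beta)/2}(1-\bar w_jz_j)^{-(2+\beta)}$, with $w$ at the "top" of $S(\zeta,\tilde\delta)$: on the box, $|k_w|^2\gtrsim V_\beta(S)^{-1}$. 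Two further routine points complete the argument. First, $\mu_\Phi$ is finite because $V_\beta(\mathbb{D}^2)<\infty$. Second, allowing $\tilde\delta\in(0,2)^2$ instead of $(0,1)^2$ only changes constants, since large boxes have $V_\beta$ comparable to that of the whole bidisc. For $\beta=-1$ (the Hardy space) the box condition is not sufficient by Carleson's counterexample, so the lemma is used only for $\beta\in(-1,0]$, consistent with the main results.
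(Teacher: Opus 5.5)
Your proposal is correct and follows the same route as the paper, which simply invokes the Carleson-measure characterization: you write $\|C_\Phi f\|^2_{A^2_\beta}=\int|f|^2\,d\mu_\Phi$ with the pullback measure $\mu_\Phi=V_\beta\circ\Phi^{-1}$, then apply the product-box characterization of Carleson measures for $A^2_\beta(\mathbb{D}^2)$. Your restriction to $\beta>-1$ is also the right reading of the statement, since at $\beta=-1$ the volume $V_\beta(S(\zeta,\tilde\delta))$ of every box is infinite and the Hardy space needs a boundary (Chang-type) formulation instead.
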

It is well understood that the volume of the Carleson box in two dimensions behaves like $V_{\beta}(S(\zeta,\delta))\approx\delta_1^{\beta+2}\delta_2^{\beta+2}.$

In the later sections of our paper, we shall need the corresponding non-boundedness criterion of this Theorem. Specifically, if for some $\zeta\in\mathbb{T}^2,$ one shows that
$$\lim_{\delta\to0}\frac{V_{\beta}(\Phi^{-1}(S(\zeta,\delta)))}{\delta^{2\beta+4}}=+\infty,$$
then non-boundedness of the composition operator $C_{\Phi}: A^2_{\beta}(\mathbb{D}^2)\to A^2_{\beta}(\mathbb{D}^2)$ is implied.

\subsection{Rational Inner Functions on the bidisc}\label{RIFs}
In complex analysis and holomorphic function spaces, the concept of \textit{inner functions} finds its source in the theory Hardy space $H^2$ on the unit disc $\mathbb{D},$ specifically, in the inner-outer factorization Theorem of Riesz. A function $\phi:\mathbb{D}\to\mathbb{D}$ is said to be \textit{inner}, if the non-tangential limit function $\phi^{*}$ is unimodular almost everywhere on the circle $\mathbb{T}.$ A function $\phi$ now, defined on the bidisc, is called \textit{inner} if the non-tangential limit function $\phi^*$ is unimodular almost everywhere on $\mathbb{T}^2,$ a direct counterpart of the one dimensional definition. Non-tangential approach regions on the bidisc or polydisc are essentially products of non-tangential approach regions on the disc. 

The rational functions that are inner on the unit disc $\mathbb{D}$ are the finite Blaschke products, i.e. $$\phi(z_1)=e^{i\theta}z_1^N\prod_{j=1}^N\frac{a_j}{|a_j|}\frac{z_1-a_j}{1-\overline{a_j}z_1}, \quad z_1\in\mathbb{D}, \quad a_j\in \mathbb{D},\quad j=1,...,N.$$

In several complex variables, Rudin, Stout and Pfister (see \cite{Rudin,Pfister}), proved a characterization of the Rational Inner Functions (in brief RIFs). To be able to state this result, we need the notion of \textit{stable polynomials} $p\in\mathbb{C}[z_1,z_2]$ on the bidisc. \textit{Stable} in our context means that the polynomial does not vanish on $\mathbb{D}^2.$ The worst case scenario is the polynomial to have zeros on the distinguished boundary $\mathbb{T}^2.$ Take such a polynomial $p(z_1,z_2)$ with bidegree $\deg(p(z_1,z_2))=(n,m)\in\mathbb{N}^2$ and define its reflection
$$\widetilde{p}(z_1,z_2):=z_1^nz_2^m\overline{p\left(\frac{1}{\overline{z_1}},\frac{1}{\overline{z_2}}\right)},\quad (z_1,z_2)\in\mathbb{D}^2.$$
Then, all Rational Inner Functions admit the representation

$$\phi(z_1,z_2)=\lambda z_1^Nz_2^M\frac{\widetilde{p}(z_1,z_2)}{p(z_1,z_2)}\quad (z_1,z_2)\in\mathbb{D}^2,\quad N,M\in\mathbb{N}.$$

Two prominent examples of RIFs on the bidisc are $\kappa(z_1,z_2)$ and the AMY (Agler-McCarthy-Young) functions, namely.

$$\kappa(z_1,z_2)=\frac{2z_1z_2-z_1-z_2}{2-z_1-z_2} \quad \mathrm{and}\quad \phi_{AMY}(z_1,z_2)=\frac{4z_1^2z_2-z_1^2-3z_1z_2-z_1+z_2}{4-3z_1-z_2-z_1z_2+z_1^2}.$$

Since the geometric results that we will present in the sequel depend only on the level-set geometry near boundary singularities, the monomial factors play no essential role in our discussion. We restrict our attention to functions of the form 
$\phi=\widetilde{p}/p.$ 

A significant distinction between finite Blaschke products and RIFs in several complex variables is that the former extend holomorphically to a disc strictly larger than $\mathbb{D},$ whereas RIFs on the bidisc (and polydisc in general) have \textit{singularities} on the distinguished boundary $\mathbb{T}^2.$ The nature of these singularities has been studied extensively by Knese, Bickel, Pascoe, Sola among others, in the papers \cite{PLMS,Pisa,Clark}. These singularities might prevent holomorphic extensions on $\mathbb{T}^2$, but RIFs still exhibit a rather nice non-tangential behavior. A fundamental Theorem of Knese assures us that RIFs have non-tangential limit \textit{everywhere} on $\mathbb{T}^2$ and, moreover, they might possess even higher non-tangential regularity, admitting non-tangential-Taylor expansions. Specifically, characterizations of non-tangentially smooth RIFs can be found in the seminal work of Knese \cite{Knese2}. The reader can also consult the papers \cite{Polonici,AJM,Hilbert1, Knese1, Knese2, TAMSKNESE, Sola} for more information and background on RIFs.

Such functions feature in problems in complex analysis, operator theory and also, they share connections with geometry, algebraic geometry, dynamics, even with engineering. To be more precise, rational inner functions are inducing solutions of interpolation problems on the polydisc (see Agler's interpolation theorem \cite{AMY, AglerInterpolation}, Kosi\'nski's work on the three point Nevanlinna-Pick interpolation problem \cite{Lukasz} among others). They exhibit interesting geometric phenomena on their level set $$\mathcal{C}_{\alpha}(\phi)=\mathrm{clos}\{\zeta\in\mathbb{T}^2:\phi^*(\zeta)=\alpha\}=\{\zeta\in\mathbb{T}^2:p(\zeta)=\alpha\widetilde{p}(\zeta)\},$$ where $\alpha\in\mathbb{T}.$ 
We underline here that the level sets of RIFs will play a vital role in the proofs of the majority of our main results (see next section).

More generally, one could say that RIFs are intimately connected with classic algebraic geometry theory at the level of the book of Fulton (see \cite{Fulton}), specifically with factorization Theorems. Moreover, they appear in Hardy and Dirichlet space theory on the polydisc. The membership problem of RIFs on the Dirichlet-type spaces and their partial derivatives on the Hardy space of the bidisc, has attracted a significant amount of attention and has produced many important sharp results and characterizations, see for instance \cite{PLMS, MeSola} as well as open conjectures (see conjecture 1 in \cite{MeSola}). Last but not least, RIFs possess the so-called \textit{transfer function realization}, a representation of them defined via matrices, that has applications in engineering, see \cite{Kumert} for more comprehensive details.

\subsection{Notational conventions} We write $A\approx B$ if there exist positive constants $C_1,C_2$ such that $C_1B\leq A\leq C_2B.$ If one side inequality holds, i.e. $A\leq C_1B$ then we write $A\lesssim B.$ To avoid introducing too many constants, $C>0$ will denote a positive constant independent of any other parameter that might occur.

\section{Statement of results}\label{results}  
In this paper we consider two categories of self maps of the bidisc. The one category consists of self maps that have one singular RIF and one smooth function in its coordinates. The second category is RIF symbols. Our main result is
Theorem \ref{characterization1} in which we characterize the boundedness of mixed symbols induced by singular RIFs and a smooth function up to the closure of the bidisc. Essentially, we show that transversal intersection of the level sets of the RIF and the smooth function is equivalent to boundedness. To reach  our main result, we prove first a series of geometric conditions that lead to non-boundedness of the operator. We do this for RIF self maps but these conditions can easily be extended to the mixed case. To reach to our conclusions, we study the level sets of two RIFs and how these interact. We prove
\begin{itemize}
\item \textit{Theorem \ref{exceptionalline} (common exceptional lines, for singular RIFs).}
\item \textit{Theorem \ref{exceptionalcurve} (common (arc of an) analytic curve for both mixed and RIF symbols)}, 
\item \textit{Theorem \ref{smoothtangentlines} (tangential intersection on smooth points for both RIF and mixed symbols) }.
\item \textit{Theorem \ref{singlulartangentlines} (tangential intersection on singularity for RIF symbols with common singularity)} 
\end{itemize}
\begin{remark} All of the results stated above can be easily modified for symbols that are mixed, i.e. one coordinate is a singular RIF and the other a $\mathcal{C}^2-$smooth function up to the closure of the bidisc.
\end{remark}
This last result will pose a significant obstacle; no criterion can exist that characterizes boundedness of composition operators with symbols induced by RIFs with common singularity (or mixed symbols), which is uniform for all $\beta>-1$ whenever the level sets are tangential to the singularity with order $M.$ The answer should depend quantitatively on the \textit{contact order} $K_{\phi}$ of each RIF, a geometric quantity that we shall define in Section \ref{tools}.

\begin{theorem}\label{characterization1}\textbf{(Transversality is equivalent to boundedness)} Let $\Phi=(\phi,\psi)$ with $\phi=\widetilde{p}/p$ with a singularity at $(1,1),$ a RIF and $\alpha_1\in\mathbb{T}$ is the generic value at its singularity. Let $\psi\in\mathcal{O}(\mathbb{D}^2,\mathbb{D})\cap \mathcal{C}^2(\overline{\mathbb{D}^2})$ with $\psi(1,1)=\tau\in\mathbb{T}.$ Assume that if the level sets $\mathcal{C}_{\alpha_1}(\phi)$ and $\mathcal{C}_{\tau}(\psi)$ intersect tangentially at $\omega\in\mathbb{T}^2,$ then the order of tangential intersection satisfies $M>4K+1,$ where $K=\max\{K_{\phi},K_{\psi}\}.$  Then, the composition operator $C_{\Phi}:A^2_{\beta}(\mathbb{D}^2)\to A^2_{\beta}(\mathbb{D}^2)$ is bounded for all $\beta\in(-1,0],$ if and only if $\mathcal{C}_{\lambda_1}(\phi)$ and $\mathcal{C}_{\lambda_2}(\psi)$ intersect transversally for all $\lambda_1,\lambda_2\in \mathbb{T}.$
\end{theorem}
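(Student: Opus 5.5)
The argument splits into the two directions and works entirely through the Carleson-box criterion (the Lemma in Subsection \ref{Bergman}): $C_\Phi$ is bounded on $A^2_\beta(\mathbb{D}^2)$ iff $V_\beta(\Phi^{-1}(S(\eta,\tilde\delta)))\lesssim \delta_1^{\beta+2}\delta_2^{\beta+2}$ uniformly in $\eta\in\mathbb{T}^2$, $\tilde\delta$.

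The first step is to localize. Since $\phi$ is a RIF with a single singularity at $(1,1)$ and $\psi\in\mathcal{C}^2(\overline{\mathbb{D}^2})$, away from $(1,1)$ the map $\Phi$ is $\mathcal{C}^2$ up to the boundary on a neighbourhood of each $\zeta\in\mathbb{T}^2\setminus\{(1,1)\}$. The box estimate only needs to be checked near points $\zeta$ with $\Phi(\zeta)\in\mathbb{T}^2$. Near such smooth points we would run the Bayart--Kosi\'nski argument of Theorem \ref{characterization} locally. By Proposition \ref{transverse}, transversality of $\mathcal{C}_{\phi(\zeta)}(\phi)$ and $\mathcal{C}_{\psi(\zeta)}(\psi)$ at $\zeta$ is the same as invertibility of $d_\zeta\Phi$, and this yields the local Carleson bound by the inverse function theorem plus a change of variables. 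Conversely, a tangency at a smooth point gives unboundedness by Theorem \ref{smoothtangentlines}, in its mixed version per the Remark. Likewise, a common exceptional line or a common analytic arc contradicts transversality and yields unboundedness via Theorems \ref{exceptionalline} and \ref{exceptionalcurve}. This settles the ``only if'' direction, except for tangency at the singular point $(1,1)$ itself.

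For tangency at $(1,1)$ the hypothesis forces the order to satisfy $M>4K+1$. The claim is that this is exactly the regime in which the tangency is invisible at the Carleson scale for $\beta\in(-1,0]$. The consistency check is that in this high-order regime the statement's equivalence must be read with such tangencies excluded, which is how the hypothesis is phrased. The real content is therefore the ``if'' direction at $(1,1)$.

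To handle $(1,1)$ I would use the Puiseux factorization of the level curve $\mathcal{C}_{\alpha_1}(\phi)$ near $(1,1)$ (Bickel--Pascoe--Sola), written in local coordinates $\zeta_j=e^{i t_j}$ as branches $t_2=-c_k t_1+O(t_1^{2})$ with contact-order control $K_\phi$. Together with the Clark-measure description of Anderson--Bergqvist--Bickel--Cima--Sola, this should give the estimate
$$|\phi(z)-\alpha_1|\gtrsim \text{dist}\big(z,\{\text{branches}\}\big)^{K_\phi}\cdot(\text{normal part}),$$
which bounds $\{|\phi-\alpha_1|<\delta_1\}$ near $(1,1)$ by a union of curved tubes of width about $\delta_1$ around the branches, thickened radially. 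The preimage $\{|\psi-\tau|<\delta_2\}$ is a tube of width about $\delta_2$ around the smooth curve $\mathcal{C}_\tau(\psi)$. Transversality of $\mathcal{C}_{\tau}(\psi)$ to every branch then makes the intersection of the tubes have area $\lesssim\delta_1\delta_2$. Adding the radial directions with weights $(1-|z_j|^2)^\beta$ and integrating then gives $V_\beta\lesssim(\delta_1\delta_2)^{\beta+2}$. It is here that $\beta\le0$ is used, so that the weight loss from the anisotropic thickness, of order $\delta^{1/K}$ along the branch, is absorbed. For level values $\lambda_1\neq\alpha_1$ near the singularity, all level curves of $\phi$ pass through $(1,1)$ and the same tube argument applies uniformly.

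I expect the main obstacle to be this uniform lower bound for $|\phi-\alpha_1|$ in a full neighbourhood of $(1,1)$ inside $\mathbb{D}^2$, not just on $\mathbb{T}^2$, with constants independent of $\lambda_1$. I would try first to get it from the explicit Puiseux factorization $p(z)=\prod_k(z_2-q_k(z_1))$ and one-variable estimates on each factor, treating the radial variable via $|p|^2-|\tilde p|^2\approx(1-|z_1|^2)+(1-|z_2|^2)$ times a positive factor.
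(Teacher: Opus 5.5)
\paragraph{The necessity direction at the singular point.}
You have the role of the hypothesis $M>4K+1$ backwards, and as a result the ``only if'' direction is not proved at the singular point. A high-order tangency of $\mathcal{C}_{\alpha_1}(\phi)$ and $\mathcal{C}_\tau(\psi)$ at $(1,1)$ is not invisible at the Carleson scale. It is exactly what forces unboundedness, and it is the fourth obstruction the paper invokes, via the mixed version of Theorem \ref{singlulartangentlines}.

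There, with $s=|\zeta-\omega_1|$ and $h$ the relevant branch of $\mathcal{C}_{\alpha_1}(\phi)$, take $A_\delta=\{(r\zeta,z_2):\ s^{M-K}<C\delta,\ 1-r<C\delta s^{K},\ |z_2-h(\zeta)|<C\delta s^{K}\}$. This set lies in $\Phi^{-1}(S((\alpha_1,\tau),C\delta))$, because the derivatives along the branch are only of size $s^{-K}$ and the other level curve stays within $s^{M}$ of $h$. Its volume is $V_\beta(A_\delta)\approx\delta^{2\beta+3+\frac{K(2\beta+3)+1}{M-K}}$. This beats $\delta^{2\beta+4}$ precisely when $M>K(2\beta+4)+1$. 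Since $K(2\beta+4)+1\le 4K+1<M$ for every $\beta\le 0$, $C_\Phi$ is unbounded for all $\beta\in(-1,0]$. The lower bound improves as $M$ grows: the more closely the two curves hug, the more volume concentrates.

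Your reading is also logically untenable. The hypothesis restricts the order of a tangency at $(1,1)$; it does not exclude one. If such a tangency were harmless, the stated equivalence would be false, since you would have a bounded operator with non-transversal level sets. The necessity direction needs this fourth non-boundedness result alongside the common-line, common-arc and smooth-tangency theorems.

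\paragraph{The sufficiency direction.}
Your outline for the ``if'' direction is the paper's: cover $\{|\phi-\alpha_1|<\delta_1\}$ near $(1,1)$ by tubes around the branches, use transversality $|h(z_1)-g(z_1)|\gtrsim|1-z_1|$ to confine $|1-z_1|\lesssim\delta_2$, then apply Fubini. However, the estimate you flag as the main obstacle is essentially the whole content, and the form you write has the wrong scaling. A bound of type $|\phi-\alpha_1|\gtrsim\mathrm{dist}^{K_\phi}$ only yields tubes of width $\delta_1^{1/K_\phi}$, which is far too fat and is inconsistent with your own next sentence. The correct scaling is distance divided by $|1-z_1|^{K_\nu}$.

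The paper obtains it by Puiseux-factoring both $q$ and $\bar q-\alpha q$ on $\mathbb{H}^2$. The two factorizations share the real polynomials $q_\nu$ and differ at order $K_\nu$, where $\psi^{\nu,m}_\alpha(0)\in\mathbb{R}$ while $\mathrm{Im}\,\psi_\nu(0)>0$. This gives
\[ |\Psi-\alpha|\gtrsim\prod_{\nu,m}\frac{|w_2-h^{\alpha,\nu,m}_{\mathbb{H}^2}(w_1)|}{|w_2-h^{\alpha,\nu,m}_{\mathbb{H}^2}(w_1)|+C_{\nu,m}|w_1|^{K_\nu}}, \]
hence containment in tubes of width $\delta_1|1-z_1|^{K_\nu}$, which are thinner than $\delta_1$. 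Factoring $p$ alone, as you propose, never sees the zeros of $\tilde p-\alpha_1 p$, i.e.\ the branches themselves, so it cannot produce this ratio. Once the correct containment is in hand there is no $\delta^{1/K}$ anisotropic loss to absorb. In the paper, $\beta\le 0$ is used only at the end, to pass from $\delta_1^{2}$ to $\delta_1^{2+\beta}$.
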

Here, $K_{\psi}$ is the order of tangency of the level curve of $\psi$ at $\omega\in\mathbb{T}^2.$
For the proof of this result, the necessity direction of the proof requires all of the geometric results stated below. Note that we state them for RIF symbols, but the proofs can be modified in the half smooth-half singular case of self-maps. 
\begin{theorem}\label{exceptionalline}\textbf{(Common exceptional line)} Let $\phi_1,\phi_2$ be RIFs. If there exist $\alpha_1,\alpha_2\in\mathbb{T}$ such that both level sets $\mathcal{C}_{\alpha_1}(\phi_1)$ and $\mathcal{C}_{\alpha_2}(\phi_2)$ contain the same vertical line $\{\zeta\in \mathbb{T}^2:\zeta_1=\tau\}$ (and analogously, horizontal), then the composition operator $C_{\Phi}:A^2_{\beta}(\mathbb{D}^2)\to A^2_{\beta}(\mathbb{D}^2)$ with symbol $\Phi=(\phi_1,\phi_2)$ is not bounded, for all $\beta\ge-1$.
\end{theorem}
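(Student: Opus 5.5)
The plan is to show that a common exceptional line makes $\Phi$ collapse a region of size $\approx\delta\times(\text{fixed set})$ into the Carleson box of size $\delta\times\delta$ centred at $(\alpha_1,\alpha_2)$. The non-boundedness criterion stated after the Carleson lemma then applies.

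\emph{Step 1: factoring.} Write $\phi_j=\widetilde{p_j}/p_j$ and set $q_j:=\widetilde{p_j}-\alpha_j p_j$. The hypothesis $\{\zeta_1=\tau\}\subset\mathcal{C}_{\alpha_j}(\phi_j)$ means $p_j(\tau,\zeta_2)=\alpha_j\widetilde{p_j}(\tau,\zeta_2)$ for all $\zeta_2\in\mathbb{T}$. Since $\bar\alpha_j=1/\alpha_j$, this gives $q_j(\tau,\cdot)\equiv0$ on $\mathbb{T}$, hence identically, as $q_j(\tau,\cdot)$ is a polynomial in $z_2$. Therefore $q_j(z_1,z_2)=(z_1-\tau)r_j(z_1,z_2)$ for a polynomial $r_j$, and
$$\phi_j(z)-\alpha_j=(z_1-\tau)\frac{r_j(z)}{p_j(z)},\qquad j=1,2.$$

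\emph{Step 2: a uniform bound for $r_j/p_j$.} We use that $p_j$ has no zeros in $\mathbb{D}^2$ and only finitely many zeros on $\overline{\mathbb{D}^2}$. After factoring out unimodular-type factors in the usual way (we may assume $p_j$ and $\widetilde{p_j}$ coprime), the zero set of $p_j$ in the closed bidisc is a finite subset of $\mathbb{T}^2$. Choose a closed arc $I\subset\mathbb{T}$ and a radius $\rho\in(0,1)$ such that the set
$$E:=\{z_2\in\mathbb{D}:\ z_2/|z_2|\in I,\ |z_2|>\rho\}$$
satisfies $\overline{\{|z_1-\tau|\le\eta\}\cap\overline{\mathbb{D}}}\times\overline{E}$ avoids all zeros of $p_1$ and $p_2$, for some fixed $\eta>0$. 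This is possible because only finitely many points on $\{\zeta_1=\tau\}$ are singular. On this compact set $|r_j/p_j|\le C$. Hence for $|z_1-\tau|<\delta/C$ and $z_2\in E$ we get $|\phi_j(z)-\alpha_j|<\delta$, that is,
$$\Phi^{-1}\big(S((\alpha_1,\alpha_2),\delta)\big)\supset\big(\{|z_1-\tau|<\delta/C\}\cap\mathbb{D}\big)\times E .$$
(Any set $E$ of positive $V_\beta$-measure kept away from the singular $z_2$-values would do; one could even take $E$ contained in a compact subset of $\mathbb{D}$.)

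\emph{Step 3: volume comparison.} For $\beta>-1$, the one-variable Carleson box estimate gives
$$V_\beta\big(\Phi^{-1}(S)\big)\gtrsim\delta^{\beta+2}\,V_\beta^{(1)}(E)\approx\delta^{\beta+2},$$
while $V_\beta(S)\approx\delta^{2\beta+4}$. The ratio is $\gtrsim\delta^{-(\beta+2)}\to\infty$, so the non-boundedness criterion applies. For $\beta=-1$ (the space $H^2(\mathbb{D}^2)$) the same argument works with the pull-back of normalized measure on $\mathbb{T}^2$ under the boundary values $\Phi^*$, which exist everywhere by Knese's theorem. Take $E$ to be the arc $I$ itself, so that both coordinates are within $\delta$ of $\alpha$ on an arc of length $\approx\delta$ times $I$. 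This gives measure $\approx\delta$ against the Carleson box measure $\delta^2$. The horizontal case is symmetric.

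The main obstacle is Step 2: guaranteeing $|r_j/p_j|$ is bounded uniformly up to the boundary on a product neighbourhood. It hinges on the zero set of $p_j$ in $\overline{\mathbb{D}^2}$ being finite, which is standard for stable polynomials with $\widetilde{p_j}$ and $p_j$ coprime. It also requires choosing $E$ away from the finitely many singular $z_2$-coordinates on the line.
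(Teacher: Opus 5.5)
Your proposal is correct and follows the paper's proof. You factor $\phi_j-\alpha_j=(z_1-\tau)r_j/p_j$, place a product set $\{|z_1-\tau|\lesssim\delta\}\times E$ with $E$ fixed inside $\Phi^{-1}(S((\alpha_1,\alpha_2),\delta))$, and compare $\delta^{\beta+2}$ with $\delta^{2\beta+4}$; the paper does the same with $E=\{|z_2|<1/2\}$. Your differences are refinements rather than a new route:
you prove the factorization directly instead of citing Lemma 3.6 of the Clark-measure paper;
you state explicitly the fact the paper uses tacitly, namely that $p_j$ has only finitely many zeros in $\overline{\mathbb{D}^2}$, all on $\mathbb{T}^2$, which gives a bound on $r_j/p_j$ uniform in $\delta$;
and you handle $\beta=-1$ through the boundary pull-back measure, where the paper's weighted-volume computation is not literally meaningful.
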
  

The situation in the next main result can be explained briefly as follows. Assume that both of the level sets of two different RIFs contain at least one common (arc of an) analytic curve. Then the composition operator induced by the self map $\Phi=(\phi_1,\phi_2)$ of the bidisc, is not bounded on $A^2_{\beta}(\mathbb{D}^2).$
\begin{theorem}\label{exceptionalcurve}\textbf{(Common analytic curve)} Let $\Phi=(\phi_1,\phi_2)$ induced by RIFs $\phi_\ell=\frac{\widetilde{p}_\ell}{p_\ell}$, where $\ell=1,2.$
Assume that there exists  $\alpha_1,\alpha_2\in\mathbb{T}$ and a closed arc $J\subset \mathbb{T}$ of positive length such that $g^{\alpha_1}_{j_1}(\zeta)=g^{\alpha_2}_{j_2}(\zeta)$, for $\zeta\in J$ and some indices $j_1,j_2,$ where $(\zeta,g^{\alpha_\ell}_{j_\ell}(\zeta))$ is a parametrization of the level set $\mathcal{C}_{\alpha_\ell}(\phi_\ell),$ and $J$ avoids the $z_1-$coordinates of the (possible) singularity. Then $C_{\Phi}:A^2_{\beta}(\mathbb{D}^2)\to A^2_{\beta}(\mathbb{D}^2)$ is not bounded, for all $\beta\ge-1.$
\end{theorem}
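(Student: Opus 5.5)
We will use the non-boundedness criterion from the Carleson-measure Lemma: it suffices to find one point $\zeta^0\in\mathbb{T}^2$ with
$$\lim_{\delta\to0}\frac{V_{\beta}(\Phi^{-1}(S(\zeta^0,\delta)))}{\delta^{2\beta+4}}=+\infty .$$
The point is chosen on the common arc. Pick $\zeta_1^0$ in the interior of $J$ and set $\zeta_2^0:=g^{\alpha_1}_{j_1}(\zeta_1^0)=g^{\alpha_2}_{j_2}(\zeta_1^0)$. We target $\zeta^0:=(\alpha_1,\alpha_2)$: we must show that a set of points $z\in\mathbb{D}^2$ near the curve $\Gamma=\{(\zeta_1,g(\zeta_1)):\zeta_1\in J\}$, with $g:=g^{\alpha_1}_{j_1}$, has image inside $S((\alpha_1,\alpha_2),\delta)$ and has $V_\beta$-volume much larger than $\delta^{2\beta+4}$.

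\textbf{Step 1 (local regularity).} Since $J$ avoids the $z_1$-coordinates of the singularities, both $\phi_\ell$ are real-analytic, and in fact holomorphic, on a neighbourhood $U$ of $\Gamma$ in $\mathbb{C}^2$. Here $p_\ell\neq0$ on $\Gamma$, because the zeros of $p_\ell$ on $\mathbb{T}^2$ are finite and excluded. Each $\phi_\ell$ is unimodular on $\mathbb{T}^2\cap U$. The branches $g^{\alpha_\ell}_{j_\ell}$ are analytic on $J$ and extend holomorphically to a neighbourhood of $J$. Hence, on $U$, we can write
$$\phi_\ell(z)-\alpha_\ell=h_\ell(z)\,\bigl(z_2-g(z_1)\bigr),$$
with $h_\ell$ holomorphic and bounded on $U$. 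The key simplification is that the \emph{same} analytic defining function $z_2-g(z_1)$ serves both coordinates. This is exactly where the hypothesis $g^{\alpha_1}_{j_1}=g^{\alpha_2}_{j_2}$ on $J$ enters, via analytic continuation of the identity off $J$.

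\textbf{Step 2 (volume estimate).} For $z\in U\cap\mathbb{D}^2$ with $|z_2-g(z_1)|<c\delta$ we get $|\phi_\ell(z)-\alpha_\ell|<\delta$ for $\ell=1,2$. Hence
$$\Phi^{-1}(S(\zeta^0,\delta))\supset\{z\in\mathbb{D}^2\cap U: z_1\in \widetilde J,\ |z_2-g(z_1)|<c\delta\},$$
where $\widetilde J$ is a fixed-size region of $z_1\in\mathbb{D}$ near $J$ (say $1-|z_1|<\delta$, $\arg z_1\in J$). Since $|g|=1$ on $J$, for $z_1$ close to $J$ the point $g(z_1)$ lies within $O(\delta)$ of $\mathbb{T}$. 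The $z_2$-slice $\{|z_2-g(z_1)|<c\delta\}\cap\mathbb{D}$ therefore contains a region of area $\approx\delta^2$ on which $1-|z_2|\approx\delta$. Its weighted area is $\approx\delta^{2+\beta}$.

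Integrating over $z_1$ is where the gain appears. The $z_1$-region has length $|J|$ of order one in the angular direction and width $\delta$ radially. Using $\int_{1-\delta}^1(1-r)^\beta dr\approx\delta^{1+\beta}$ for $\beta>-1$, its weighted measure is $\approx|J|\,\delta^{1+\beta}$. The total is
$$V_\beta\gtrsim |J|\,\delta^{2\beta+3},\qquad\text{so}\qquad \frac{V_\beta}{\delta^{2\beta+4}}\gtrsim |J|\,\delta^{-1}\to\infty .$$
For $\beta=-1$ (the Hardy case) the same computation is done with boundary measures on $\mathbb{T}^2$ instead. There the preimage contains a $\delta$-neighbourhood of an arc of length $\approx|J|$, against a box of measure $\delta^2$.

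\textbf{Main obstacle.} The main difficulty is Step 1 with honest control on $U$. We must ensure that the factorization, and the approximation $|g(z_1)|\approx 1-O(\delta)$, hold uniformly on a neighbourhood of $\Gamma$ that still meets $\mathbb{D}^2$ in the required set. We also need that $g$ maps the chosen $z_1$-region so that the $z_2$-disc of radius $c\delta$ around $g(z_1)$ lies substantially inside $\mathbb{D}$. We first try to handle this by shrinking $J$ to a compact subarc on which $p_\ell$ is bounded below. On that subarc we use the inner property: $|\phi_\ell|<1$ in $\mathbb{D}^2$ forces $\partial_{z_2}\phi_\ell\neq0$ on the smooth part of the level curve, via a Julia–Carathéodory type argument. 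This gives analyticity of $g$ and the local orientation $|g(z_1)|\le1$ for $|z_1|\le1$.
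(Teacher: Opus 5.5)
Your argument is essentially the paper's. Both bound $|\phi_\ell-\alpha_\ell|$ by the distance to the common curve. Both then take a set that is $\delta$-thin radially in $z_1$ over $J$ times a $\delta$-disc in $z_2$, giving $V_\beta\gtrsim\delta^{\beta+1}\cdot\delta^{\beta+2}=\delta^{2\beta+3}$. Your factorization $\phi_\ell-\alpha_\ell=h_\ell\,(z_2-g(z_1))$ just repackages the paper's estimate: the paper bounds the partial derivatives on a neighbourhood $K_{\rho_0}$ of $\Gamma$, integrates radially in $z_1$, then in $z_2$ up to $g(\zeta)$.

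There is, however, a sign error exactly at the step you flag as the main obstacle. The orientation is the reverse of what you state. Since $\phi_\ell(z_1,g(z_1))=\alpha_\ell\in\mathbb{T}$ on a complex neighbourhood of $J$ and $|\phi_\ell|<1$ on $\mathbb{D}^2$, one has $|g(z_1)|\ge 1$ for $z_1\in\mathbb{D}$ near $J$ (think of $\phi=z_1z_2$, $g(z_1)=1/z_1$). So the centre of your $z_2$-disc lies outside $\overline{\mathbb{D}}$, at distance up to $\approx\|g'\|_\infty(1-|z_1|)$. When $1-|z_1|$ is comparable to $\delta$, the slice $\{|z_2-g(z_1)|<c\delta\}\cap\mathbb{D}$ may be empty. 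The remark that $g(z_1)$ is within $O(\delta)$ of $\mathbb{T}$ does not by itself give the area claim.

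The repair is cheap, and either of the following works:
\begin{itemize}
\item Restrict to $1-|z_1|<\eta\delta$ with $\eta\|g'\|_\infty\le c/2$. This costs only a constant in the $z_1$-measure.
\item Do as the paper does and centre the disc at $g(\zeta)\in\mathbb{T}$, where $\zeta=z_1/|z_1|$. Use $|g(r\zeta)-g(\zeta)|\lesssim 1-r$ to keep $|z_2-g(z_1)|\lesssim\delta$.
\end{itemize}
Either way each slice is essentially a half-disc of weighted area $\approx\delta^{2+\beta}$, and your volume computation goes through unchanged.

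The Julia--Carath\'eodory step is also unnecessary. Only the upper bound $\sup_U|h_\ell|<\infty$ is used, and analyticity of $g$ on $J$ is already supplied by Lemma \ref{parametrization}.
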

In the next Corollary we shall give an algebraic version of the previous Theorem. To fix notation, $\overline{P}$ is the reflection operator for a polynomial $P\in\mathbb{C}[z_1,z_2]$ defined in the bi-upper halfplane $\mathbb{H}^2:=\{z\in\mathbb{C}^2:\mathrm{Im}z_1>0,\mathrm{Im}z_2>0\}.$ 
\begin{corollary}\label{Puiseux2} Let $\Phi=(\phi_1,\phi_2)$ be two RIFs induced by stable polynomials $p_1,p_2\in\mathbb{C}[z_1,z_2]$ on the bidisc and denote by $q_1,q_2$ the corresponding polynomials defined in $\mathbb{H}^2,$ after passing from $\mathbb{D}^2$ to $\mathbb{H}^2$ with a Cayley transform. Assume that $\overline{q_1}-\alpha_1q_1$ and $\overline{q_2}-\alpha_2q_2$ share a common factor in their Weierstrass-Puiseux expansion for some pair $\vec{\alpha}=(\alpha_1,\alpha_2).$ Then the composition operator $C_{\Phi}:A^2_{\beta}(\mathbb{D}^2)\to A^2_{\beta}(\mathbb{D}^2)$ induced by RIFs $\phi_1=\widetilde{p}_1/p_1,\quad \phi_2=\widetilde{p}_2/p_2$ is not bounded, for all $\beta\ge-1$.
\end{corollary}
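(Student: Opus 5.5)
The plan is to reduce Corollary \ref{Puiseux2} to Theorem \ref{exceptionalcurve} by translating the algebraic hypothesis into the geometric one. First we pass to the bi-upper half-plane. Let $\mathcal{C}$ denote the Cayley transform. Then $\phi_\ell\circ\mathcal{C}^{-1}$ is a rational inner function on $\mathbb{H}^2$ of the form $\overline{q_\ell}/q_\ell$, and the level set $\mathcal{C}_{\alpha_\ell}(\phi_\ell)$ corresponds, away from the points at infinity, to the real zero set
\[
\{x\in\mathbb{R}^2:\overline{q_\ell}(x)-\alpha_\ell q_\ell(x)=0\}.
\]

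Next we factor each $\overline{q_\ell}-\alpha_\ell q_\ell$ locally in Weierstrass--Puiseux form around a real point, in the $x_2$ variable:
\[
\overline{q_\ell}-\alpha_\ell q_\ell=u_\ell\prod_j\bigl(x_2-h^{\alpha_\ell}_{\ell,j}(x_1)\bigr),
\]
with $u_\ell$ a unit. We then use the results of Anderson--Bergqvist--Bickel--Cima--Sola on Clark measures. They show that for unimodular $\alpha$ the branches $h_{\ell,j}^{\alpha_\ell}$ are real analytic in $x_1$ on real intervals, with no fractional exponents, so they are exactly the Cayley images of the parametrizations $g^{\alpha_\ell}_{j}$ of $\mathcal{C}_{\alpha_\ell}(\phi_\ell)$.

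The hypothesis of a common factor means that $h^{\alpha_1}_{1,j_1}\equiv h^{\alpha_2}_{2,j_2}$ on a neighbourhood of some real $x_1^0$. By the identity theorem for real-analytic functions, this equality holds on a whole open interval. We shrink that interval to a closed subinterval that avoids the finitely many first coordinates of singularities of $\phi_1,\phi_2$, and also avoids exceptional vertical lines. Pulling back by $\mathcal{C}^{-1}$ gives a closed arc $J\subset\mathbb{T}$ of positive length on which $g^{\alpha_1}_{j_1}=g^{\alpha_2}_{j_2}$.

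Theorem \ref{exceptionalcurve} then applies and gives that $C_\Phi$ is unbounded for all $\beta\ge-1$.

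The main obstacle is the identification step: showing that a shared Puiseux factor really is a shared real branch of positive length rather than a complex branch, and that a point of $J$ can be chosen away from the singular set. I would first try to handle this by appealing to the real-analytic level-curve parametrization of Anderson et al. for unimodular $\alpha$. If a common factor is only given near a singular point, where the branches pass through the singularity, I would instead restrict to a punctured one-sided interval on which the branch is still real analytic. The resulting arc then touches the singular coordinate only at an endpoint, and we shrink it slightly so that it avoids that coordinate entirely.
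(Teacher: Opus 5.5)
Your proposal is correct and follows the paper's route. The paper gives no separate argument: it records the corollary immediately after Theorem \ref{exceptionalcurve} and relies on the remark in Section \ref{tools} that, under the Cayley transform, the real-analytic Puiseux factors of $\overline{q}-\alpha q$ are exactly the level-curve branches $g_j^{\alpha}$, so a shared factor is a shared analytic arc. Your write-up makes that reduction explicit, including the extra step of shrinking to a closed arc $J$ that avoids the $z_1$-coordinates of the singularities when the common factor is only a germ at the singular point.
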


After the statement of the first two geometric non-boundedness criteria, a natural question arises: What exactly happens whenever two distinct curves or a straight line with a curve intersect? In the next two Theorems, we present the case in which the curves (or even a straight line with a curve) are tangential to each other. 
\begin{theorem}\label{smoothtangentlines}\textbf{(Smooth tangential intersection)} Let $\Phi=(\phi_1,\phi_2)$ where $\phi_1,\phi_2$ are RIFs and assume that there exist two analytic curves $(\zeta,g_{j_1}^{\phi_1,\alpha_1}(\zeta))\in\mathcal{C}_{{\alpha}_1}(\phi_1)$ and $(\zeta,g^{\alpha_2,\phi_2}_{j_2}(\zeta))\in\mathcal{C}_{\alpha_2}(\phi_2),$ for some $\alpha_1,\alpha_2\in\mathbb{T}$ such that they intersect tangentially at a point $\zeta_0\in\mathbb{T}^2$ away from the (possible) singularities, i.e. there exists an exponent $M>1$ such that 
$$|g^{\alpha_1,\phi_1}_{j_1}(\zeta)-g_{j_2}^{\alpha_2,\phi_2}(\zeta)|\approx |\zeta-\zeta_0|^M$$
Then, the composition operator $C_{\Phi}:A^2_{\beta}(\mathbb{D}^2)\to A^2_{\beta}(\mathbb{D}^2)$ is not bounded. 
\end{theorem}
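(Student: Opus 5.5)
We argue through the Carleson-box criterion of the Lemma in Section \ref{Bergman}: it suffices to exhibit a point $\omega=\Phi(\zeta_0)=(\alpha_1,\alpha_2)\in\mathbb{T}^2$ for which
$$\lim_{\delta\to0}\frac{V_{\beta}(\Phi^{-1}(S(\omega,\delta)))}{\delta^{2\beta+4}}=+\infty .$$
Since $\zeta_0$ lies away from the singularities of $\phi_1,\phi_2$, both RIFs are real-analytic (indeed holomorphic) in a neighbourhood $U$ of $\zeta_0$ in $\mathbb{C}^2$. They are unimodular on $U\cap\mathbb{T}^2$, and near $\zeta_0$ we may take their level sets to be the graphs $\zeta_2=g_{j_\ell}^{\alpha_\ell,\phi_\ell}(\zeta_1)$. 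Since $\phi_\ell$ is inner, it has nonvanishing derivative in the direction transversal to its level curve. So near $\zeta_0$ we can write
$$1-\overline{\alpha_\ell}\phi_\ell(z)\approx c_\ell(z_1)\bigl(z_2-g_\ell(z_1)\bigr)+O(1-|z_1|)+O(1-|z_2|),$$
where $g_\ell$ is the holomorphic extension of the parametrizing function. Equivalently, the first-order (Julia-type) behaviour gives $1-|\phi_\ell(z)|\approx (1-|z_1|)+(1-|z_2|)$, with the argument controlled by the distance of $\mathrm{Arg}\, z_2$ from the curve.

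The key step is a lower bound for the preimage. Set $z_j=r_je^{i\theta_j}$ with $\zeta_0=(e^{i\theta_1^0},e^{i\theta_2^0})$, and consider the region
$$R_\delta=\Bigl\{z:\ |\theta_1-\theta_1^0|<c\,\delta^{1/M},\ |e^{i\theta_2}-g_1(e^{i\theta_1})|<c\delta,\ 1-r_1<c\delta,\ 1-r_2<c\delta\Bigr\}.$$
On $R_\delta$ the tangency hypothesis gives $|g_1-g_2|\lesssim|\theta_1-\theta_1^0|^M<c^M\delta$. Hence both $|1-\overline{\alpha_1}\phi_1|$ and $|1-\overline{\alpha_2}\phi_2|$ are $\lesssim c\delta$, and for $c$ small this yields $R_\delta\subset\Phi^{-1}(S(\omega,\delta))$. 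Computing the weighted volume, with the weight $(1-r_j)^\beta$ integrated over $1-r_j<c\delta$ (valid for $\beta>-1$), gives
$$V_\beta(R_\delta)\approx \delta^{1/M}\cdot\delta\cdot\delta^{\beta+1}\cdot\delta^{\beta+1}=\delta^{2\beta+3+1/M}.$$
Dividing by $\delta^{2\beta+4}$ leaves $\delta^{1/M-1}$, which tends to $+\infty$ because $M>1$. This proves non-boundedness.

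The main obstacle is justifying the first-order expansion of $1-\overline{\alpha_\ell}\phi_\ell$ uniformly on $R_\delta$. We need to know that the "normal" derivative of $\phi_\ell$ at points of the level curve near $\zeta_0$ is nonzero and bounded. We also need to know that the interior displacements $1-r_j$ contribute only $O(\delta)$, which is where the choice $1-r_j<c\delta$ enters. For the first point I would use that $\phi_\ell=\widetilde p_\ell/p_\ell$ is smooth near $\zeta_0$, with $p_\ell(\zeta_0)\neq0$. The standard fact for inner functions (a Julia–Carathéodory / Hopf-lemma argument on the slice discs) gives $\partial_{z_2}\phi_\ell\neq0$ wherever the level set is a graph over $\zeta_1$. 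Taylor's theorem up to order two then controls the errors uniformly.

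Finally, the endpoint $\beta=-1$ (the Hardy case) needs a separate treatment. There the volume is replaced by boundary measure on $\mathbb{T}^2$ together with the Carleson condition for $H^2$. The analogous count $\delta^{1/M}\cdot\delta/\delta^2\to\infty$ should still go through using non-tangential boundary values on the arc.
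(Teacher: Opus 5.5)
Your argument is correct and is essentially the paper's proof. The paper uses the same Lipschitz bounds $|\phi_\ell(r\zeta,z_2)-\alpha_\ell|\lesssim (1-r)+|z_2-g_{j_\ell}(\zeta)|$ and passes from $g_{j_2}$ to $g_{j_1}$ via the triangle inequality and $|g_{j_1}-g_{j_2}|\approx|\zeta-\zeta_0|^M$; it then integrates over the same kind of set (a disc of radius $\delta$ about $g_{j_1}(\zeta)$ in $z_2$ instead of your polar box) and gets $V_\beta\gtrsim\delta^{2\beta+3+1/M}$. Note that the inclusion $R_\delta\subset\Phi^{-1}(S(\omega,\delta))$ only needs upper bounds on the partial derivatives of $\phi_\ell$ near $\zeta_0$, which follow from $p_\ell(\zeta_0)\neq 0$, so the nonvanishing of the normal derivative you single out as the main obstacle is never actually used.
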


\begin{remark} The exponent $M>1$ is not to be confused with the \textit{order of contact} (see Section \ref{tools}) between two curves of the level set of a RIF. Here we have two different RIFs and curves that are tangential to each other after framing them on the same picture. We cannot speak about their order of contact as they do not belong to the level set of one RIF.
\end{remark}

As we observe in the assumptions, we consider the tangential intersection to occur in a point of smoothness of both RIFs. But what if this intersection happens on a common, singular point for two RIFs? The answer slightly differs and is contained in

\begin{theorem}\label{singlulartangentlines} \textbf{(Singular tangential intersection)} Let $\Phi=(\phi_1,\phi_2)$ where $\phi_1,\phi_2$ RIFs with a common singularity $\omega=(\omega_1,\omega_2)\in\mathbb{T}^2$ and generic values $\alpha_1,\alpha_2$ on $\omega,$ satisfying Theorem 5.6. of \cite{Clark}. Assume that there exist two analytic curves $(\zeta,g_{j_1}^{\phi_1,\alpha_1}(\zeta))\in\mathcal{C}_{{\alpha}_1}(\phi_1)$ and $(\zeta,g^{\alpha_2,\phi_2}_{j_2}(\zeta))\in\mathcal{C}_{\alpha_2}(\phi_2)$ with respective contact orders $K_{\phi_1}, K_{\phi_2}\ge 2$ such that they intersect tangentially at $\omega\in\mathbb{T}^2$ with order $M>2K+1,$ where $K:=\max\{K_{\phi_1},K_{\phi_2}\}$ i.e.
$$|g^{\alpha_1,\phi_1}_{j_1}(\zeta)-g_{j_2}^{\alpha_2,\phi_2}(\zeta)|\approx |\zeta-\omega_1|^M$$
Then, the composition operator $C_{\Phi}:A^2_{\beta}(\mathbb{D}^2)\to A^2_{\beta}(\mathbb{D}^2)$ is not bounded for all $M>K(2\beta+4)+1$ or equivalently $-1<\beta<\frac{M-1}{2K}-2.$  
\end{theorem}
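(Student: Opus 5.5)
We use the non-boundedness criterion from the Carleson-measure lemma of Subsection \ref{Bergman}. We set $\zeta=\Phi(\omega)=(\alpha_1,\alpha_2)$, or rather the generic values attained along the chosen branches. The goal is to exhibit a subset $E_\delta\subset\Phi^{-1}(S((\alpha_1,\alpha_2),\delta))$ with
$$V_\beta(E_\delta)\gg \delta^{2\beta+4}\quad (\delta\to 0).$$
The natural candidate for $E_\delta$ is a thin tubular neighbourhood inside $\mathbb{D}^2$ of the arc of $\mathbb{T}^2$ where the two level curves $(\zeta,g^{\alpha_1,\phi_1}_{j_1}(\zeta))$ and $(\zeta,g^{\alpha_2,\phi_2}_{j_2}(\zeta))$ stay $\delta$-close. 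The parameters are the length $\eta$ of the $\zeta_1$-interval around $\omega_1$ and the radial depth $t$ into the bidisc.

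The first step is a local estimate for $|\phi_\ell(z)-\alpha_\ell|$ near $\omega$. We intend to use the Clark-measure/Puiseux results of Bickel--Cima--Sola (Theorem 5.6 of \cite{Clark}) and the contact-order description from Section \ref{tools}. Near the singularity, for $z=(r_1\zeta_1,r_2\zeta_2)$ with $\zeta_1$ close to $\omega_1$, we expect
$$|\phi_\ell(z)-\alpha_\ell|\lesssim \frac{|\zeta_2-g^{\alpha_\ell,\phi_\ell}_{j_\ell}(\zeta_1)|+(1-r_1)+(1-r_2)}{|\zeta_1-\omega_1|^{K_{\phi_\ell}}+(1-r_1)+(1-r_2)}.$$
The denominator reflects that the branches of $\mathcal{C}_{\alpha_\ell}(\phi_\ell)$ through $\omega$ separate like $|\zeta_1-\omega_1|^{K_{\phi_\ell}}$. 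This is the step we expect to be the main obstacle. It requires turning the Puiseux factorization of $\widetilde p_\ell-\alpha_\ell p_\ell$ together with $|p_\ell(z)|\gtrsim$ (distance to the other branches) into a two-sided bound that is uniform in a neighbourhood of $\omega$ intersected with $\mathbb{D}^2$. Our first attempt would be to write $\phi_\ell-\alpha_\ell=(\widetilde p_\ell-\alpha_\ell p_\ell)/p_\ell$, factor the numerator into branches $\zeta_2-g_j(\zeta_1)$ via Weierstrass--Puiseux, and bound $|p_\ell|$ from below by the product of distances to the branches. The product structure then isolates the $j_\ell$-th branch, with the remaining factors comparable to $|\zeta_1-\omega_1|^{K}$.

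Second, we restrict to the region where $|\zeta_1-\omega_1|\approx\eta$, where $\zeta_2$ lies within $c\,\delta\eta^{K}$ of $g^{\alpha_1,\phi_1}_{j_1}(\zeta_1)$, and where $1-r_1,1-r_2\le c\,\delta\eta^K$. By the tangency hypothesis, $|g_{j_1}-g_{j_2}|\approx\eta^M$. Choosing $\eta^M\lesssim\delta\eta^K$, i.e. $\eta\approx\delta^{1/(M-K)}$, ensures both numerators are $\lesssim\delta\eta^K$, so both $|\phi_\ell-\alpha_\ell|\lesssim\delta$ and the region lies in $\Phi^{-1}(S)$. We will also check that the condition $M>2K+1$ makes this choice consistent with staying in the regime where the local estimate is valid.

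Third, we compute the weighted volume of this region. The $\zeta_1$-length is $\eta$, the $\zeta_2$-width is $\delta\eta^K$, and the radial integrals give $\int_0^{\delta\eta^K}s^\beta ds\approx(\delta\eta^K)^{\beta+1}$ in each variable. Hence
$$V_\beta(E_\delta)\gtrsim \eta\cdot\delta\eta^{K}\cdot(\delta\eta^K)^{2\beta+2}.$$
Comparing with $\delta^{2\beta+4}$, the ratio behaves like a power of $\delta$ whose exponent, after substituting $\eta=\delta^{1/(M-K)}$, is negative precisely when $M>K(2\beta+4)+1$. This would give divergence of the Carleson ratio, and hence unboundedness via the criterion of Subsection \ref{Bergman}. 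If the bookkeeping of exponents does not land exactly on this threshold, we would re-optimize the choice of $\eta$, for example by letting the $\zeta_2$-width be $\eta^M$ rather than $\delta\eta^K$, so as to match the stated condition $\beta<\frac{M-1}{2K}-2$.
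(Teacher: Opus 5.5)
Your proposal is correct and follows essentially the same route as the paper. It uses the same local bound $|\phi_\ell-\alpha_\ell|\lesssim|\zeta_1-\omega_1|^{-K_{\phi_\ell}}\bigl((1-r)+|z_2-g_{j_\ell}(\zeta_1)|\bigr)$, the same tube of width $\delta|\zeta_1-\omega_1|^{K}$ in both the radial and $z_2$ directions over a $\zeta_1$-arc of length $\delta^{1/(M-K)}$, and the same volume $\eta(\delta\eta^K)^{2\beta+3}$; the paper takes the local bound from the Clark-weight asymptotics $|\partial\phi_\ell(\zeta,g_{j_\ell}(\zeta))|\approx|\zeta-\omega_1|^{-K_{\phi_\ell}}$ of Theorem 5.6 in \cite{Clark} rather than from the Puiseux factorization. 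Your bookkeeping lands exactly on the threshold $M>K(2\beta+4)+1$, so no re-optimization is needed, and $M>2K+1$ only ensures that the range $-1<\beta<\frac{M-1}{2K}-2$ is nonempty (the local regime needs just $M>K$).
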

Our assumption on the $\alpha_1,\alpha_2\in\mathbb{T}$ satisfying Theorem 5.6. of \cite{Clark} will be explained in Section \ref{tools}.
\begin{remark} Let $\Phi=(\phi_1,\phi_2),$ where $\phi_1,\phi_2$ are RIFs with a common singularity $\omega\in\mathbb{T}^2.$ Boundedness of the composition operator $C_{\Phi}:A^2_{\beta}(\mathbb{D}^2)\to A^2_{\beta}(\mathbb{D}^2)$ cannot be characterized uniformly for all $\beta>-1.$ The singular tangential intersection Theorem implies that if such criterion exists, it will be quantitatively related to the contact orders $K_{\phi_1}, K_{\phi_2}$, the weight parameter $\beta$ and the order of tangential intersection $M.$ 
\end{remark}
The above Theorem and its proof can be modified for the half smooth-half singular case. If we assume that the first order conditions hold, and the level set at the singularity is tangential to a level curve of the smooth function locally (the \textit{implicit function Theorem} guarantees that such a curve exists ), then the order of tangential intersection $M$ should satisfy $M>(2\beta+4)K+1$ for the weighted Bergman space, where $K=\max\{K_{\phi},K_{\psi}\}$ where $K_{\psi}$ is the order of the tangential intersection of the local germ and the singularity $\omega\in\mathbb{T}^2.$

Thanks to these results, we also recover the main Theorem of \cite{Me} as a simple corollary, that is
\begin{corollary}
    Let $\Phi=(\phi,\phi)$ where $\phi$ is a RIF with a singularity in $\mathbb{T}^2$. Then $C_{\Phi}:A^2_{\beta}(\mathbb{D}^2)\to A^2_{\beta}(\mathbb{D}^2)$ is not bounded.
\end{corollary}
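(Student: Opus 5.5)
The natural route is to reduce the corollary to Theorem \ref{exceptionalcurve} (common analytic curve), or, if needed, to Theorem \ref{exceptionalline} (common exceptional line). For $\Phi=(\phi,\phi)$ the two coordinate RIFs coincide. So for any $\alpha\in\mathbb{T}$ the level sets $\mathcal{C}_{\alpha}(\phi_1)$ and $\mathcal{C}_{\alpha}(\phi_2)$ are literally the same set, and the same parametrizing branches $(\zeta,g^{\alpha}_j(\zeta))$ appear in both. The hypothesis of Theorem \ref{exceptionalcurve} then holds with $\alpha_1=\alpha_2=\alpha$ and $j_1=j_2=j$, on any closed arc $J\subset\mathbb{T}$ of positive length on which the branch $g^{\alpha}_j$ is defined and analytic, and whose closure avoids the $z_1$-coordinate of the singularity.

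First I will check that such an arc exists. Since $\phi=\widetilde p/p$ is non-constant (it has a singularity on $\mathbb{T}^2$), for a generic $\alpha\in\mathbb{T}$ the level set $\{p=\alpha\widetilde p\}\cap\mathbb{T}^2$ is a nonempty real-analytic curve. By the structure results for level sets of RIFs quoted in Section \ref{tools} (the parametrization $\zeta\mapsto(\zeta,g^{\alpha}_j(\zeta))$ by finitely many branches, analytic away from finitely many points), there is a branch and an arc $J$ on which $g^{\alpha}_j$ is analytic. Shrinking $J$ removes the finitely many bad $z_1$-values, including $\omega_1$ where $\omega$ is the singularity. One degenerate possibility needs separate handling: the chosen level set might contain a vertical line $\{\zeta_1=\tau\}$ instead of graph pieces. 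In that case the same line lies in both level sets, and Theorem \ref{exceptionalline} applies directly. Either way the operator is unbounded for every $\beta\ge-1$.

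As a sanity check, the underlying mechanism is transparent. $\Phi$ maps $\mathbb{D}^2$ into the diagonal $\{z_1=z_2\}$, so $\Phi^{-1}(S(\zeta,\delta))$ with $\zeta=(\alpha,\alpha)$ equals $\phi^{-1}(\{|w-\alpha|<\delta\})$. This set contains a one-sided neighbourhood of a piece of the curve $\mathcal{C}_\alpha(\phi)$ of thickness about $\delta$ in the transversal direction, because $\phi$ is smooth with nonzero gradient there. Its weighted volume is therefore of order at least $\delta^{\beta+2}$, or of order $\delta$ in the Hardy case, while $V_\beta(S(\zeta,\delta))\approx\delta^{2\beta+4}$. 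The ratio tends to infinity, which is exactly the Carleson non-boundedness criterion of Section \ref{Bergman}.

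The only real obstacle is confirming the regularity of the level-set branches needed to invoke Theorem \ref{exceptionalcurve}. To handle it, I would pick $\alpha$ outside the finite exceptional set, namely away from the values at the singularity and from the values whose level sets contain vertical or horizontal lines. For such generic $\alpha$, the results of Bickel--Cima--Sola on Clark measures give analytic branches on $\mathbb{T}\setminus\{\omega_1\}$.
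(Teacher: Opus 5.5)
Your proposal is correct and is essentially the paper's argument: with $\alpha_1=\alpha_2=\alpha$ the two level sets coincide, so Theorem \ref{exceptionalcurve} applies on an arc of a branch avoiding $\omega_1$, or Theorem \ref{exceptionalline} for a shared line. The one slip is in your heuristic check: a one-sided $\delta$-tube around a smooth arc of $\mathcal{C}_\alpha(\phi)$ has weighted volume $\approx\delta^{2\beta+3}$, as in the proof of Theorem \ref{exceptionalcurve}, not $\delta^{\beta+2}$, but this still dominates $\delta^{2\beta+4}$, so your conclusion stands.
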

To see this, just observe that taking $\alpha_1=\alpha_2=\alpha$, forces us to have two copies of the same level set, which implies that their intersection contains all curves and all lines of the level set of $\phi.$ Then, by Theorems \ref{exceptionalline} or \ref{exceptionalcurve} the result follows.
 In general, the containment of the same straight line or arc of curve, or points of high order tangential intersection in two level sets of two different RIFs, seems to force volumes to concentrate on one direction. Intuitively speaking, the volumes of the sublevel sets $|\phi_j-1|$ when one direction is annihilated are forced to behave like a product of a one-dimensional shrinking disc of radius $\delta$ times a set of constant area, thus annihilating the influence of the second variable. This key observation forces the composition operator to be unbounded.
 
We present two figures, borrowed from \cite{Pisa}, to serve as examples of non boundedness emerging after observing two different level sets.
\begin{figure}[!hb]
\begin{center}
\includegraphics[width=12cm ]{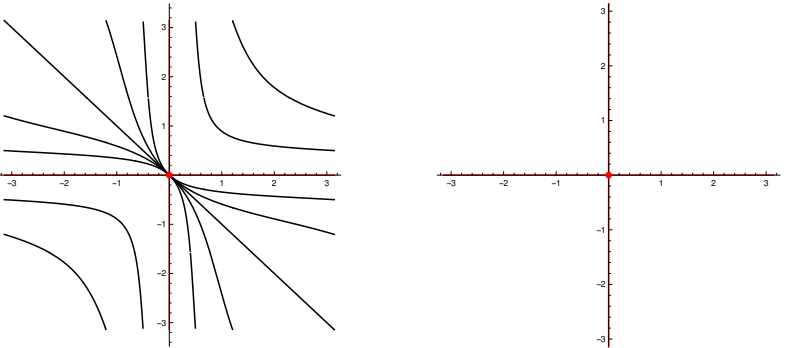}
\caption{A case in which $\mathcal{C}_{\alpha_1}(\phi_1)$ (left) and $\mathcal{C}_{\alpha_2}(\phi_2)$ (right) share common vertical and horizontal line. The composition operator in such a case is not bounded.}
\end{center}
\end{figure}
\begin{figure}
\begin{center}
\includegraphics[width=12cm ]{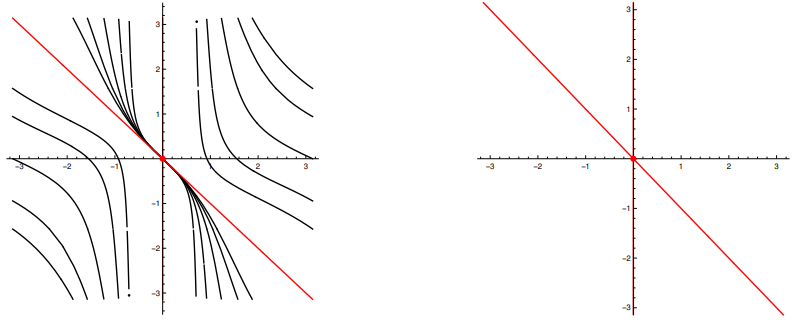}
\caption{A case in which $\mathcal{C}_{\alpha_1}(\phi_1)$ (left) and $\mathcal{C}_{\alpha_2}(\phi_2)$ (right) share a curve (anti-diagonal), $\gamma=(e^{it},e^{-it})\subset\mathbb{T}^2$ and a common vertical line. The composition operator in such a case is also not bounded.}
\end{center}
\end{figure}
Here is a brief summary of our results, to help the reader have a better picture of the geometric results.
\[
\boxed{
\begin{array}{|l|}
\hline
\begin{aligned}\\
&C_{\Phi}:A^2_{\beta}(\mathbb D^2)\to A^2_{\beta}(\mathbb D^2) \\[1mm]
&\textbf{\underline{Mixed case:} }\Phi=(\phi,\psi),\ 
\phi \text{ singular RIF},\ 
\psi\in \mathcal O(\mathbb D^2,\mathbb D)\cap \mathcal{C}^2(\overline{\mathbb D^2}) \\[0.5mm]
&\qquad \text{Tangential intersection at the singularity of order $M>4K+1$} \\[0.5mm]
&\qquad C_{\Phi}\ \text{bounded for }\beta\in(-1,0]
\iff
\mathcal{C}_{\lambda_1}(\varphi)\pitchfork \mathcal{C}_{\lambda_2}(\psi)\ \forall\,\lambda_1,\lambda_2\in\mathbb T.
\\[1.2mm]
\hline\\
&\textbf{\underline{Geometric obstructions:}} \\[0.5mm]
&\qquad \bullet \text{ common exceptional line } \Longrightarrow C_{\Phi}\ \text{not bounded},\\
&\qquad \bullet \text{common analytic arc } \Longrightarrow C_{\Phi}\ \text{not bounded},\\
&\qquad \bullet \text{ smooth tangential intersection } \Longrightarrow C_{\Phi}\ \text{not bounded},\\
&\qquad \bullet \text{ singular tangential intersection } \Longrightarrow C_{\Phi}\ \text{not bounded if } 
M>K(2\beta+4)+1,\\
&\qquad\qquad\text{equiv. } -1<\beta<\dfrac{M-1}{2K}-2,\qquad
K=\max\{K_{\phi_1},K_{\phi_2}\},\\\\
\end{aligned}
\\
\hline
\end{array}
}
\]
The notation "$\pitchfork$" means "transversal intersection". 
\section{Auxiliary results} \label{tools} 
 The new viewpoint that we provide in this paper is that the study of the geometry of the level set of a RIF can reveal operator theoretical properties of the composition operator. Some results on  compactness appear in recent papers \cite{compactness} and \cite{Bayart2} that give a first geometric link between operator theoretical properties and geometry of the symbol. Here, we choose to be a little bit more particular, and study the level sets of each coordinate function separately. Specifically, we focus on the level sets of RIFs. 
 
The theory on the RIF level sets has been developed in several papers. The reader is invited to study \cite{Pisa, AJM, Polonici, Clark1, Clark} for a comprehensive account on the level sets of RIFs and their connection with elements from algebraic geometry. In our paper we shall borrow results and definitions from these mentioned works, and we will apply them in a different context to match our purposes. To give a flavor, essentially, we consider two RIFs, picture their level sets, and then superimpose them to see how these two level sets interact. The characteristics of the interplay of the two level sets will reveal to us information on the composition operator induced by a self map with coordinate functions these two RIFs, as it already has become evident from the statement of the main results.

Moreover, we provide some background material on the contact order and its rigorous definitions, along with the necessary local factorization results that will contribute in the proof of our main Theorem. In addition we give some background on the first order condition and its equivalent geometric statement.

\subsection{The level sets of RIFs}
Let us now provide the reader with the essential background through the definitions and theorems that will be invoked in the sequel. We begin with some definitions and results related to the level sets of RIFs, found in the paper \cite{Clark}.
\begin{definition} A point $\alpha\in\mathbb{T}$ is said to be an \textit{exceptional value} if $\varphi(\tau, z_2)=\alpha$ or if $\varphi(z_1,\tau) = \alpha$ for some $\tau\in\mathbb{T}.$ This is equivalent to saying that one of the two lines $\{\zeta \in \mathbb{T}^2: \zeta_1 = \tau\}$ or $\{\zeta \in \mathbb{T}^2: \zeta_2 = \tau\}$ is in $\mathcal{C}_{\alpha}(\varphi)$ for some $\tau\in\mathbb{T}.$ If $\alpha \in \mathbb{T}$ is not an
exceptional value, then we say that $\alpha$ is a generic value.
\end{definition}
Note that the graphs are implicit from the \textit{implicit function Theorem} away from the singularities. From the same paper, in order to reach the desired results, we shall invoke parts of Lemma 3.6. More precisely, a step of its proof. If a RIF $\phi=\widetilde{p}/p$ has the line $\{\zeta\in\mathbb{T}^2:\zeta_1=\tau\}$ contained in the set $\mathcal{C}_{\alpha}(\phi),$ then for all $\alpha\in\mathbb{T},$ we have a factorization of this form
$$p(z_1,z_2)-\alpha \widetilde{p}(z_1,z_2)=(z_1-\tau)Q(z_1,z_2),$$
where $Q$ is a polynomial in two variables. For more details, the reader is invited to consult the proof of this specific Lemma in \cite{Clark}.

For the proof of the "common analytic curve Theorem" we will apply the following Lemma (Lemma 2.5. of \cite{Clark})

\begin{lemma} \label{parametrization}
Let $\phi$ be a bidegree $(m,n)\in\mathbb{N}^2$ RIF. For each $\alpha \in \mathbb{T}$ and any choice of $\tau_0\in \mathbb{T}$ there exist functions $g_{1}^{\alpha},\ldots ,g_{n}^{\alpha}$ defined on $\mathbb{T}$ and analytic on $\mathbb{T}\backslash \{\tau_0\}$ such that $\mathcal{C}_{\alpha}(\phi)$ can be written as a union of graphs of the form
$$\{(\zeta ,g_{j}^{\alpha}(\zeta)):\zeta \in \mathbb{T}\} ,\quad j = 1,\ldots ,n,$$
potentially, together with a finite number of vertical lines $\zeta_{1} = \tau_{1},\ldots ,\zeta_{1} = \tau_{k}$ , where each $\tau_{j}\in \mathbb{T}$.
\end{lemma}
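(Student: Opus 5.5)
The plan is to reduce the statement to a one-variable fact about finite Blaschke products, and then to glue the resulting roots analytically in the parameter $\zeta_1$. Write $\phi=\widetilde{p}/p$ and set $q_\alpha(z_1,z_2):=\widetilde{p}(z_1,z_2)-\alpha p(z_1,z_2)$. This polynomial has degree at most $n$ in $z_2$, and $\mathcal{C}_{\alpha}(\phi)=\{\zeta\in\mathbb{T}^2:q_\alpha(\zeta)=0\}$.

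\textbf{Step 1: vertical lines.} First remove the vertical lines. The set of $\tau\in\mathbb{T}$ for which $q_\alpha(\tau,\cdot)\equiv 0$ is finite, because $q_\alpha\not\equiv 0$. Call these points $\tau_1,\dots,\tau_k$. Factor
$$q_\alpha=\prod_{j}(z_1-\tau_j)^{m_j}\,\widehat{q}_\alpha,$$
where $\widehat{q}_\alpha(\tau_j,\cdot)\not\equiv 0$. The lines $\{\zeta_1=\tau_j\}$ give the vertical lines in the statement, and it remains to describe the zero set of $\widehat{q}_\alpha$ on $\mathbb{T}^2$ as a union of graphs.

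\textbf{Step 2: $n$ simple unimodular roots for generic $\zeta_1$.} Fix $\zeta_1\in\mathbb{T}$. By stability of $p$ and Hurwitz's theorem, $p(\zeta_1,\cdot)$ has no zeros in $\mathbb{D}$. Hence, after cancelling the common factors of $p(\zeta_1,\cdot)$ and $\widetilde{p}(\zeta_1,\cdot)$, the function $z_2\mapsto \widetilde{p}(\zeta_1,z_2)/p(\zeta_1,z_2)$ is a unimodular constant times a finite Blaschke product $B_{\zeta_1}$. Common factors can occur only for $\zeta_1$ in the finite set of $z_1$-coordinates of singularities, together with the $\tau_j$.

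For all other $\zeta_1$, $B_{\zeta_1}$ has degree exactly $n$. Its argument is strictly increasing on $\mathbb{T}$, so $B_{\zeta_1}=\alpha$ has exactly $n$ simple roots, all on $\mathbb{T}$. By the implicit function theorem these roots depend real-analytically on $\zeta_1$ away from a finite exceptional set $E\subset\mathbb{T}$.

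\textbf{Step 3: continuation through $E$ (main obstacle).} The main difficulty is to continue the roots analytically through the points of $E$, where roots may collide or the $z_2$-degree may drop. My approach is a Cayley transform in both variables, $z_\ell=\frac{1+ix_\ell}{1-ix_\ell}$, so that $\mathbb{T}$ becomes $\mathbb{R}$ and $\widehat{q}_\alpha$ becomes a polynomial $r(x_1,x_2)$. For every real $x_1\notin E$, all $x_2$-roots of $r(x_1,\cdot)$ are real, and the same holds at points of $E$ by continuity.

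By the Puiseux theorem, near $x_1^0\in E$ the roots are Puiseux series in $(x_1-x_1^0)^{1/d}$. A Puiseux branch that is real-valued for all real $x_1$ on both sides of $x_1^0$ must have only integer exponents. This is the Rellich-type argument for hyperbolic polynomials: if a conjugate branch had fractional exponents, some conjugate root would be non-real for real $x_1$ on one side. Hence the roots are given by convergent power series, i.e. they are analytic at $x_1^0$.

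Two further issues must be checked here. First, roots cannot escape to $\infty$ as $x_1\to x_1^0$ (which corresponds to $z_2=1$, still a point of $\mathbb{T}$); I would handle this by performing the Cayley transform centered at a point of $\mathbb{T}$ not hit at $x_1^0$. Second, if the degree of $\widehat{q}_\alpha$ in $z_2$ drops, the missing roots must have moved to the point at infinity of the chart, which is again handled by changing the chart.

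\textbf{Step 4: global labeling.} This yields $n$ analytic root functions locally on all of $\mathbb{T}$. On the arc $\mathbb{T}\setminus\{\tau_0\}$, which is simply connected, they can be continued and labelled globally as $g_1^{\alpha},\dots,g_n^{\alpha}$. At $\tau_0$ the continuation may come back permuted (monodromy), so we only define the $g_j^{\alpha}$ at $\tau_0$ by a choice of values. This gives functions defined on all of $\mathbb{T}$ and analytic on $\mathbb{T}\setminus\{\tau_0\}$. Their graphs, together with the vertical lines from Step 1, exhaust $\mathcal{C}_{\alpha}(\phi)$, by the root count in Step 2 and continuity on $E$.
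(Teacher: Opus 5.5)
The paper gives no proof of this lemma. It is quoted as Lemma 2.5 of \cite{Clark} and used as a black box in the common analytic curve theorem, so there is no internal argument to compare yours against.

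Your argument is a correct, self-contained proof along the natural lines:
\begin{itemize}
\item Off the finite set $E$ of $z_1$-coordinates of singularities, $\phi(\zeta_1,\cdot)$ is a Blaschke product of degree exactly $n$. So $\widetilde{p}-\alpha p$ has $n$ simple unimodular roots, which move analytically by the implicit function theorem.
\item Real-rootedness after the Cayley transform lets a Rellich-type Puiseux argument carry these roots analytically through $E$.
\item Restricting the continuation to the interval $\mathbb{T}\setminus\{\tau_0\}$ removes monodromy. This is exactly why the lemma must allow one exceptional point: for $\phi=z_1z_2^2$ the two roots $\pm(\alpha\overline{\zeta_1})^{1/2}$ are exchanged after one turn of $\zeta_1$.
\end{itemize}
What your route buys over the bare citation is an explanation of these features. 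The Puiseux step could equally be replaced by citing the classical (Rellich) fact that a real-rooted polynomial with real-analytic coefficients in one real variable has real-analytic root functions.

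A few points to tighten.

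\textbf{Degree drop in $z_2$.} The $z_2$-degree of $\widehat{q}_\alpha$ cannot drop at $\zeta_1^0\in E$. For $\zeta_1\notin E$ near $\zeta_1^0$, write $\widehat{q}_\alpha(\zeta_1,z_2)=a_n(\zeta_1)\prod_{k=1}^n(z_2-r_k(\zeta_1))$ with $|r_k|=1$. Then every coefficient is bounded by a binomial coefficient times $|a_n(\zeta_1)|$. So $a_n(\zeta_1^0)=0$ would force $\widehat{q}_\alpha(\zeta_1^0,\cdot)\equiv 0$, contradicting Step 1.

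Your proposed remedy would not have addressed such a drop anyway. In your chart $z=(1+ix)/(1-ix)$, the point $z_2=\infty$ corresponds to $x_2=-i$. The chart's point at infinity is $z_2=-1$, not $z_2=1$.

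What the change of chart genuinely handles is a unimodular root passing through the chart's point at infinity $c$. The leading $x_2$-coefficient of the transformed polynomial at $x_1^0$ is a nonzero constant times $\widehat{q}_\alpha(\zeta_1^0,c)$. Choosing $c$ off the finitely many roots of $\widehat{q}_\alpha(\zeta_1^0,\cdot)$ makes Puiseux applicable. Your two ``further issues'' thus collapse into this single one.

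\textbf{Reality step.} State it for a whole conjugacy cycle $y(\omega^j t)$, where $t=(x_1-x_1^0)^{1/d}$ and $y(t)=\sum c_k t^k$.
\begin{itemize}
\item Reality for $x_1>x_1^0$ gives $c_k\in\mathbb{R}$ and $\omega^k\in\mathbb{R}$, i.e.\ $d\mid 2k$, whenever $c_k\neq 0$.
\item Reality for $x_1<x_1^0$, where $t=e^{i\pi/d}s$ with $s>0$, then gives $d\mid k$.
\item Irreducibility of the cycle now forces $d=1$.
\end{itemize}

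\textbf{Values at $\tau_0$.} Define $g_j(\tau_0)$ as one-sided limits, so that the values at $\tau_0$ exhaust the roots there with multiplicity.
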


For the proof of our characterization and for Corollary \ref{Puiseux2} after the common curve Theorem, it is necessary for us to recall the following Theorems related to the local
\textit{Weierstrass-Puiseux factorization} of stable polynomials. We shall invoke two results, namely

\begin{theorem}\label{Puiseux1} 
There exists a positive integer $J$ and associated positive integers
$M_1,\dots,M_J$ such that, near $(0,0)$, the polynomial $q$ admits the factorization
\begin{equation}\label{eq:theorem52-factorization}
q(z)=u(z)\prod_{j=1}^{J}\prod_{m=1}^{M_j}\left(z_1 + q_j(z_2) + z_2^{2L_j}\,\psi_j\!\bigl(\mu_j^{\,m} z_2^{1/M_j}\bigr)
\right),
\end{equation}
where:
\begin{itemize}
    \item \(u\) is a unit at $(0,0)$, that is, $u$ is analytic and $u(0,0)\neq 0$;
    \item for each $j=1,\dots,J$, $q_j \in \mathbb{R}[z]$ is a polynomial with real coefficients satisfying $\deg q_j < 2L_j,$ $q_j(0)=0$ and $ q_j'(0)>0;$
    \item for each $j=1,\dots,J$, $\mu_j=e^{2\pi i/M_j}$ is a primitive $M_j$-th root of unity;
    \item for each $j=1,\dots,J$, $\psi_j$ is analytic in a neighborhood of $0$ and satisfies
    $\mathrm{Im} \psi_j(0) > 0.$
\end{itemize}
\end{theorem}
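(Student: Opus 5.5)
We would prove Theorem \ref{Puiseux1} by combining the classical Weierstrass preparation and Newton--Puiseux theorems with the sign constraint that stability imposes on the roots. The standing assumptions are the ones implicit in the statement: $q$ has no zeros in $\mathbb{H}^2$, $q(0,0)=0$, and $q$ and $\overline{q}$ have no common factor. The last assumption is inherited from the coprimality of $p$ and $\widetilde{p}$ for an RIF in lowest terms.

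\emph{Step 1 (preparation and Puiseux roots).} First we check that $q(\cdot,0)\not\equiv 0$. Otherwise $z_2$ divides $q$, and $z_2$ is real on $\mathbb{R}^2$, so $z_2$ would be a common factor of $q$ and $\overline{q}$, which is excluded. Weierstrass preparation in $z_1$ then gives
$$q=u\,W,\qquad W(z_1,z_2)=\prod_{k=1}^{d}\bigl(z_1+r_k(z_2)\bigr),$$
with $u$ a unit at $(0,0)$. By Newton--Puiseux, each root $-r_k$ is a convergent series in $z_2^{1/M}$ with no constant term. The roots fall into Galois orbits $\{r(\mu^m z_2^{1/M}):m=1,\dots,M\}$ of sizes $M_j$, and these orbits give the products over $m$ in \eqref{eq:theorem52-factorization}. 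It therefore suffices to analyse a single orbit, represented by $r(t)=\sum_{k\ge1}c_k t^{k/M}$.

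\emph{Step 2 (sign constraint from stability).} If $\mathrm{Im}\,z_1>0$ and $\mathrm{Im}\,z_2>0$, then $q(z_1,z_2)\neq0$. Hence for $z_2$ small in the upper half plane, every branch satisfies $\mathrm{Im}\,r(z_2)\ge0$. Letting $z_2$ tend to the real axis, every branch also satisfies $\mathrm{Im}\,r(x)\ge0$ for small real $x$ of either sign.

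Now write $r(t)=c\,t^{a}+\cdots$ with $c\neq0$. As $\arg z_2$ runs over $(0,\pi)$, the argument of $c\,z_2^{a}$ sweeps an interval of length $a\pi$. Nonnegativity of the imaginary part forces $a\le1$. Applying the same reasoning to the branch obtained by continuing around $z_2=0$ forces $a=1$ and $c>0$.

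We then proceed inductively. As long as the successive terms have integer exponents and real coefficients, we collect them into a real polynomial $q_j$, so that $q_j(0)=0$ and $q_j'(0)=c>0$. Let $c_e t^{e}$ be the first term that either has a non-integer exponent or a non-real coefficient. Such a term exists, because otherwise $z_1+r(z_2)$ is a real-analytic factor vanishing on a real curve, which would produce a common factor of $q$ and $\overline{q}$.

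\emph{Step 3 (the first non-real term).} This is the step we expect to be the main obstacle. For real $x$ we have
$$\mathrm{Im}\,r(x)=\mathrm{Im}\bigl(c_e\,x^{e}\bigr)+o(|x|^{e}),$$
and this must be $\ge0$ for $x>0$ and for $x<0$, on every branch $\mu^m$.

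Suppose $e$ is fractional. Then summing the leading terms over the conjugate branches $\mu^m$ produces cancellation, so some branch has a negative leading imaginary part. Suppose instead that $e$ is an odd integer. Then $c_e\notin\mathbb{R}$, and $\mathrm{Im}(c_e x^{e})$ changes sign between $x>0$ and $x<0$. Both cases contradict Step 2.

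Consequently $e=2L$ is an even integer, $\mathrm{Im}\,c_{e}>0$ strictly (equality would make $c_e$ real, contradicting the choice of $e$), and all fractional powers occur only at orders $\ge 2L$. Writing the tail as $z_2^{2L}\psi_j(\mu_j^m z_2^{1/M_j})$ with $\psi_j(0)=c_{2L}$ then gives $\mathrm{Im}\,\psi_j(0)>0$ and $\deg q_j<2L_j$.

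The delicate point in this step is making the branch-averaging argument rigorous when fractional exponents appear before $2L$. We would handle this by the same Step 2 argument of $\arg z_2$ sweeping the upper half plane, now applied after subtracting the real polynomial part $q_j$.

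\emph{Step 4 (assembly).} Multiplying over all orbits $j=1,\dots,J$ and keeping the unit $u$ yields exactly \eqref{eq:theorem52-factorization}.
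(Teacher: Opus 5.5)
There is no proof of this statement in the paper to compare against. It is quoted from Subsection 5.2 of \cite{Clark} without proof, and with its hypotheses left implicit. Your route is the standard argument for this factorization and is correct in substance: Weierstrass preparation in $z_1$, then Newton--Puiseux, then reading off the shape of each Galois orbit of roots from the constraint $\mathrm{Im}\, r\ge 0$ on the closed upper half-plane. You also rightly make explicit the hypotheses the statement needs: $q$ is zero-free on $\mathbb{H}^2$, $q(0,0)=0$, and $q,\overline{q}$ are coprime. Without them the statement fails; for example, $q=z_1+z_2$ admits no factor with $\mathrm{Im}\,\psi(0)>0$.

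Two places need tightening.

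\emph{The fractional case of Step 3.} The vanishing of $\sum_m \mu^{mk}$ (with $e=k/M$, $M\nmid k$) does not by itself give a branch with negative leading imaginary part on a given half-axis. For instance, if $e=3/2$ and $c_e$ is real, the leading terms on $x>0$ are $\pm c_e x^{3/2}$, and both are real. What nonnegativity plus zero sum actually gives is two families of equalities:
\begin{itemize}
\item from $x>0$: $\mathrm{Im}(c_e\mu^{mk})=0$ for all $m$;
\item from $x<0$, where the branches of $x^{e}$ are $|x|^{e}e^{i\pi e}\mu^{mk}$: $\mathrm{Im}(c_e e^{i\pi e}\mu^{mk})=0$ for all $m$.
\end{itemize}
Taking $m=0$ forces $c_e\in\mathbb{R}$ and $e^{i\pi e}\in\mathbb{R}$, so $e\in\mathbb{Z}$, a contradiction. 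The same two-half-axis argument is what rules out a leading exponent $a<1$ in Step 2; this is your ``continuing around $z_2=0$''. For $a=1$, the real axis gives $c\in\mathbb{R}$ and the upper half-plane then gives $c>0$.

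\emph{The ``delicate point'' at the end.} It does not arise, because by the definition of $e$ no fractional exponent can precede it. The remedy you suggest would also fail. After subtracting $q_j$, the constraint $\mathrm{Im}\, r\ge 0$ on the upper half-plane says nothing about $\mathrm{Im}(r-q_j)$ there. The reason is that $\mathrm{Im}\, q_j(z_2)\approx q_j'(0)\,\mathrm{Im}\, z_2$ is of order $|z_2|$, which swamps every term of order $|z_2|^{e}$. Restricting to the real axis, where $q_j$ is real, is the right device, and with the correction above it already suffices.
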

And
\begin{theorem}
For all but finitely many $\alpha\in \mathbb{R}$, we can also factor
$$\bar{q}-\alpha q=(1-\alpha)u^{\alpha}(z)\prod_{j=1}^J\prod_{m=1}^{M_j}\left(z_1+q_j(z_2)+z_2^{2L_j}\psi^{\alpha}_{j,m}(z_2)\right)$$
where $u^{\alpha}$ is a unit at $(0,0)$, each $q_j\in \mathbb{R}[z_2]$ with $q(0)=0$ and $q'(0)>0,$ and each $\psi^{\alpha}_{j,m}$ is a real analytic function in a neighborhood of the origin.
\end{theorem}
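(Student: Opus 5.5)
The statement is a local fact about stable polynomials in $\mathbb{H}^2$, and I would derive it from the Puiseux factorization of $q$ in Theorem \ref{Puiseux1}. The plan has three parts. First apply Weierstrass preparation to $\bar q-\alpha q$. Then show its roots in $z_1$ are real and depend real-analytically on $z_2$. Finally, show that these roots cluster exactly as the factors of $q$ do. Throughout, $\bar q(z)=\overline{q(\bar z_1,\bar z_2)}$.

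\emph{Step 1 (preparation).} By Theorem \ref{Puiseux1}, $q(z_1,0)$ vanishes at $z_1=0$ to order $N=\sum_j M_j$. Write $q(z_1,0)=cz_1^N+O(z_1^{N+1})$. Then
\[
(\bar q-\alpha q)(z_1,0)=(\bar c-\alpha c)z_1^N+O(z_1^{N+1}).
\]
I exclude the single value of $\alpha$ for which $\bar c=\alpha c$. For every other $\alpha$, Weierstrass preparation in $z_1$ gives $\bar q-\alpha q=U^\alpha\cdot W^\alpha$, with $U^\alpha$ a unit and $W^\alpha$ a Weierstrass polynomial of degree $N$ in $z_1$. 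The constant $(1-\alpha)$ is only a normalization absorbed into $u^\alpha$; its presence suggests $\alpha=1$ should also be discarded, corresponding to the generic-value condition.

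\emph{Step 2 (real roots and analytic branches).} Let $z_2=x$ be small and real. Stability of $q$ on $\mathbb{H}^2$ means $z_1\mapsto q(z_1,x)$ has no zeros in the upper half-plane, and by reflection $\bar q(\cdot,x)$ has none in the lower half-plane. Then $|\bar q/q|<1$ for $\mathrm{Im}\,z_1>0$ and $>1$ for $\mathrm{Im}\,z_1<0$, in the manner of Hermite--Biehler. Hence for $\alpha$ on the level that defines the level set (unimodular after the Cayley normalization; real in the parametrization used here), every root of $W^\alpha(\cdot,x)$ is real.

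$W^\alpha$ is a monic polynomial in $z_1$ whose coefficients are analytic in $x$ and whose roots are all real for real $x$. Rellich's theorem then says the roots can be chosen as real-analytic functions $r_1(x),\dots,r_N(x)$. So no fractional powers $x^{1/M_j}$ survive, even though they appear in the factorization of $q$. This is why each $\psi^\alpha_{j,m}$ is analytic in $z_2$ alone.

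\emph{Step 3 (matching branches to the $q_j$).} This is the step I expect to be the main obstacle. I must show that exactly $M_j$ of the branches $r_k$ satisfy
\[
r_k(x)=-q_j(x)+x^{2L_j}\psi^\alpha_{j,m}(x).
\]
My plan is a Rouché argument on the circles $\{|z_1+q_j(x)|=C|x|^{2L_j}\}$, comparing $\bar q-\alpha q$ with $q$ and $\bar q$ on these circles. The factors of $q$ with index $j'\neq j$ are of size at least $|x|^{\min(2L_j,2L_{j'})}$ there. The $j$-th factors have the form $z_1+q_j(x)+x^{2L_j}\psi_j(\cdot)$ with $\mathrm{Im}\,\psi_j(0)>0$, so $\bar q$ and $q$ differ in a controlled way and are comparable in modulus. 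This should show that $q$, $\bar q$ and $\bar q-\alpha q$ all have exactly $M_j$ zeros inside each such disc. The same count holds for all $\alpha$ except those for which $\bar q/q$ takes the value $\alpha$ identically along a leading-order branch, which is the exceptional-line case.

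The delicate point is choosing $C$ uniformly and handling indices $j\neq j'$ with $q_j=q_{j'}$ to high order. There I would order the $L_j$ and run the Rouché argument level by level. After that, the real-analyticity from Step 2 gives $(r_k(x)+q_j(x))/x^{2L_j}=\psi^\alpha_{j,m}(x)$, which is real analytic.

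Finally, I collect the excluded $\alpha$. These are the value with $\bar c=\alpha c$, the normalization value $\alpha=1$, and the finitely many values where two Rouché clusters merge. The merging values are finitely many because each is determined by finitely many leading Puiseux coefficients.
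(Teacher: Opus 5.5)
\textbf{Verdict: there is a genuine gap, in Step 3.}

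The paper gives no proof of this statement. It quotes it from Subsection 5.2 of \cite{Clark}, so your argument has to stand on its own.

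Steps 1 and 2 are fine, provided $\alpha$ is unimodular. The printed ``$\alpha\in\mathbb{R}$'' must be read as $\alpha\in\mathbb{T}$. For real $\alpha$ with $|\alpha|\neq 1$, your own Hermite--Biehler argument shows that the roots of $\bar q-\alpha q$ near $0$ are not real. So drop the hedge ``real in the parametrization used here''.

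\textbf{Why the proposed Rouch\'e comparison fails.} Step 3 carries all the content. The circles $|z_1+q_j(x)|=C|x|^{2L_j}$ are centred at real points and cross $\mathbb{R}$. There $|\bar q(z_1,x)|=|q(z_1,x)|$, so neither $q$ nor $\bar q$ dominates on the whole circle. ``Comparable in modulus'' therefore gives no zero count.

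\textbf{The missing idea.} Compare $\bar q-\alpha q$ with its own rescaled leading part. Put $z_1=-q_j(x)-x^{2L_j}s$ with $x$ real. Each factor of $q$ then behaves in one of three ways.
\begin{enumerate}
\item Branches that separate from $q_j$ before order $2L_j$ (including coarser clusters) behave like a nonzero constant times a power of $x$. A coarser cluster contributes $x^{2L_\nu}(\psi_\nu(0)-b_\nu)$ with $b_\nu$ real.
\item Branches with the same pair $(q_\nu,L_\nu)=(q_j,L_j)$ behave like $x^{2L_j}(\psi_\nu(0)-s)$.
\item Deeper clusters with $q_\nu\equiv q_j \bmod x^{2L_j}$ behave like $x^{2L_j}(a_\nu-s)$ with $a_\nu$ real.
\end{enumerate}
Hence $x^{-e_j}q\to\kappa_jP_j(s)$ for some integer $e_j$ and constant $\kappa_j\neq 0$. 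Here $P_j$ has degree $D_j$ and real leading coefficient. Since $\bar q(z_1,x)=\overline{q(\bar z_1,x)}$ for real $x$, locally uniformly in $s$
\[
x^{-e_j}(\bar q-\alpha q)\longrightarrow F_{j,\alpha}(s):=\bar\kappa_j\,\overline{P_j(\bar s)}-\alpha\,\kappa_j P_j(s).
\]
Thus $F_{j,\alpha}$ has full degree $D_j$ (and then only real zeros) if and only if $\alpha\neq\bar\kappa_j/\kappa_j$. Only for such $\alpha$ does Hurwitz on $|s|=R$ give exactly $D_j$ roots in the $j$-th disc.

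Next, peel off deeper clusters by induction from the largest $L$ down. Each analytic branch $r_k$ from Step 2 has a definite order of contact with each $-q_\nu$, so it is assigned to a unique pair. This yields exactly $\sum_{(q_\nu,L_\nu)=(q_j,L_j)}M_\nu$ branches of the form $-q_j-x^{2L_j}\psi^\alpha$. This is where ``all but finitely many'' really comes from: one excluded value $\bar\kappa_j/\kappa_j$ per cluster, plus $\bar c/c$ from Step 1.

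\textbf{The exceptional set is misidentified.} You describe it as ``$\bar q/q\equiv\alpha$ along a leading-order branch, the exceptional-line case''. That is not the mechanism, and the count you assert is false at the degenerate values. Consider the local model
\[
q=(z_1+z_2+iz_2^2)(z_1+z_2+z_2^2+iz_2^4).
\]
It is only locally of the form in Theorem \ref{Puiseux1}, with $q_A=z_2$, $L_A=1$ and $q_B=z_2+z_2^2$, $L_B=2$. The degeneracy depends only on leading data. At the $x^4$-scale the $A$-factor contributes $\kappa_B=i-1$, so $\bar\kappa_B/\kappa_B=i$. Writing $z_1=-x-x^2+t$ gives
\[
(\bar q-iq)(z_1,x)=(1-i)\,(t^2+x^4t-2x^6),
\]
with roots $t=\pm\sqrt2\,x^3+O(x^4)$. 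Both branches are $-q_A+O(x^2)$, and neither is $-q_B+O(x^4)$. So the factorization fails at $\alpha=i$, although no line is involved.

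Your ``two clusters merge'' picture is closer to the truth. Without the explicit leading coefficient of $F_{j,\alpha}$, however, you cannot show that merging happens for only finitely many $\alpha$. Nor can you show that for every other $\alpha$ no branch leaves its cluster.
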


The idea here is that every analytic curve $g_{j}^{\alpha}$ of the level set corresponds to $q_j(z_2)+z_2^{2L_j}\psi^{\alpha}_{j,m}(\zeta_2).$ This correspondence is possible via the Cayley transforms from $\mathbb{D}\to\mathbb{H}$ and its inverses, i.e.

$$m \colon \mathbb{D}\to \mathbb{H}, \quad m(z_j)=\frac{1+iz_j}{1-iz_j}, \quad j=1,2,$$

It is important to note that in both factorizations, the $q_j$ polynomials match, but not the functions $\psi_j$ and $\psi^{\alpha}_{j,m}.$ They completely disagree, as $\mathrm{Im}\psi_j(0)>0$ and $\psi^{\alpha}_{j,m}(0)\in\mathbb{R}.$ This detail will be crucial in the sequel, specifically in the proof of our main result.

For more details, we suggest to the reader to consult Subsection 5.2. on \cite{Clark}.
\subsection{Contact order} In one of our main results in this paper, the notion of \textit{contact order} is mentioned. \textit{Contact order} is an algebro-geometric quantity related to the singularities of a RIF.

Consider a RIF $\phi$ of bidegree $(n,m)\in \mathbb{N}^2.$ The slice $$\phi(z_1,\zeta_2)=\phi_{\zeta_2}(z_1),\quad \zeta_2\in\mathbb{T}$$ is a finite Blaschke product with at most $n-$distinct zeros $a_1(\zeta_2),...,a_n(\zeta_2).$ The authors of the paper \cite{PLMS} proved, among other important results, the following.

\begin{theorem}(Theorem 3.5. of \cite{PLMS}) 
\label{def:loccont}
Let
$\varphi=\frac{\tilde p}{p}$ 
be a rational inner function on $\mathbb{D}^2$ with $\deg \varphi=(n,m)\in\mathbb{N}^2$ and a
singularity on $\mathbb{T}^2$ with $z_2$-coordinate $\tau_2$. Then there exists a
rational number $K>0$ such that
$$\epsilon(\varphi,\zeta_2):=\min_{1\leq j\leq n}\{1-|a_j(\zeta_2)|\}\approx |\tau_2-\zeta_2|^K,$$
for all $\zeta_2\in\mathbb{T}$ in a neighborhood $U$ of $\tau_2$. The number $K$ is
called the $(z_1,\tau_2)$-contact order of $\varphi$.
\end{theorem}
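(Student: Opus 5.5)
The plan is to read off the behaviour of the slice zeros $a_j(\zeta_2)$ from the local Weierstrass--Puiseux factorization of Theorem \ref{Puiseux1}, after passing to the bi-upper half-plane with the Cayley transform $m$. For fixed $\zeta_2\in\mathbb{T}$, the zeros of the Blaschke product $z_1\mapsto\varphi(z_1,\zeta_2)$ are the zeros of $\widetilde p(\cdot,\zeta_2)$ in $\mathbb{D}$. These are the reflections $1/\overline{b}$ of the zeros $b$ of $p(\cdot,\zeta_2)$, which lie outside $\mathbb{D}$ by stability. Hence $1-|a_j(\zeta_2)|\approx |b_j(\zeta_2)|-1$ whenever $|b_j|$ stays bounded, and it suffices to estimate how close the zero set $Z(p)$, sliced at $z_2=\zeta_2$, comes to the circle $|z_1|=1$.

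First I localize. Let $(\tau_1,\tau_2)$ be the singularity, and assume for simplicity that it is the only one on $\{\zeta_2=\tau_2\}$; otherwise take the minimum over finitely many such points. The zeros $b(\zeta_2)$ that stay a fixed distance from $\tau_1$ as $\zeta_2\to\tau_2$ lie at distance bounded below from $\mathbb{T}$. Indeed, $p$ has only finitely many zeros on $\mathbb{T}^2$, so for $\zeta_2$ near $\tau_2$ these branches stay off $\mathbb{T}$ by continuity of roots. Such branches contribute a constant, which is irrelevant for the minimum because the relevant branches tend to $0$. I then apply $m$ in both variables, moving $(\tau_1,\tau_2)$ to $(0,0)$ and $\mathbb{T}^2$ to $\mathbb{R}^2$ locally. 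Call the transformed polynomial $q$. Since $m$ is a biholomorphism near these points mapping $\mathbb{T}$ to $\mathbb{R}$, the quantity $|b|-1$ becomes comparable to the distance $|\mathrm{Im}\,w_1|$ of the transformed zero from $\mathbb{R}$. Likewise $|\zeta_2-\tau_2|\approx|x|$, where $x=m(\zeta_2)\in\mathbb{R}$.

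Now I use Theorem \ref{Puiseux1}. Near the origin the zeros of $q(\cdot,x)$ are exactly
$$w_1=-q_j(x)-x^{2L_j}\psi_j(\mu_j^m x^{1/M_j}),\qquad j=1,\dots,J,\ m=1,\dots,M_j.$$
For real $x$, the polynomial $q_j$ has real coefficients and $x^{2L_j}=|x|^{2L_j}>0$ for both signs of $x$. Therefore $\mathrm{Im}\,w_1=-|x|^{2L_j}\,\mathrm{Im}\,\psi_j(\mu_j^m x^{1/M_j})$. By continuity of $\psi_j$ at $0$ and $\mathrm{Im}\,\psi_j(0)>0$, this gives $|\mathrm{Im}\,w_1|\approx |x|^{2L_j}$ uniformly for $|x|$ small, for each branch and each sheet. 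Taking the minimum over all branches yields $\epsilon(\varphi,\zeta_2)\approx|\zeta_2-\tau_2|^{K}$ with $K=\min_j 2L_j$. This $K$ is a positive integer, so in particular rational, and it is the same on both sides of $\tau_2$.

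The main obstacle is the bookkeeping in the transfer between $\mathbb{D}^2$ and $\mathbb{H}^2$. One must check that every zero of $p(\cdot,\zeta_2)$ approaching $\tau_1$ is captured by the local factorization, which is true since $u$ is a unit near $(0,0)$. One must also check that the comparabilities $|b|-1\approx|\mathrm{Im}\,w_1|$ hold uniformly on a fixed neighbourhood $U$. Both follow from the local biholomorphy of $m$ and the fact that the unit $u$ carries no zeros near $(0,0)$.
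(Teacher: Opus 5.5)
Your route is the one the paper points to. The paper gives no proof of its own: it cites \cite{PLMS} and remarks that the statement follows from the half-plane Puiseux factorization, with the exponents $2L_j$ as local contact orders and their maximum as the contact order. Your localization, the transfer $1-|a_j|\approx|b_j|-1\approx|\mathrm{Im}\,w_1|$, and the computation $\mathrm{Im}\,w_1=-|x|^{2L_j}\,\mathrm{Im}\,\psi_j(\mu_j^m x^{1/M_j})$ giving $|\mathrm{Im}\,w_1|\approx|x|^{2L_j}$ branchwise are all fine.

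The step that fails is the last one. You conclude $\epsilon(\varphi,\zeta_2)\approx|\zeta_2-\tau_2|^{K}$ with $K=\min_j 2L_j$. Since $|x|<1$, the map $a\mapsto|x|^{a}$ is decreasing, so
$$\min_{j}\, c_j|x|^{2L_j}\approx |x|^{\max_j 2L_j}.$$
The smallest of the distances $1-|a_j(\zeta_2)|$ comes from the branch with the \emph{largest} exponent, i.e. the one approaching the boundary fastest. With your choice of $K$ only the upper bound $\epsilon\lesssim|x|^{\min_j 2L_j}$ holds. The lower bound fails as soon as two branches have different $L_j$: exponents $2$ and $4$ give $\epsilon\approx x^4$, not $x^2$.

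The fix is immediate: take $K=\max 2L_j$, the maximum over all sheets and branches at all the finitely many singularities on $\{\zeta_2=\tau_2\}$. Your step ``take the minimum over finitely many such points'' likewise yields a maximum of exponents. This $K$ is an even positive integer, consistent with the paper's remark that the contact order is the maximum of the $2L_j$ and is always even. With that correction your argument proves the statement, and in fact the stronger form with $K$ an even integer rather than merely rational.
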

Due to this Theorem we obtain a definition of the notion of \textit{local contact order}.
The concrete definition of the local and global contact order follows

\begin{definition}(Definition 3.6 of \cite{PLMS}) Let $\varphi=\frac{\tilde p}{p}$ be a rational inner function on $\mathbb{D}^2$ with $\deg \varphi=(m,n)$ and let
$\tau_{2,1},\ldots,\tau_{2,J}$ denote the distinct $z_2$-coordinates of the singularities of
$\varphi$. Then the $z_1$-contact order of $\varphi$ is the number $K^1$ defined by

$$K^1:=\max\bigl\{(z_1,\tau_{2,j})\text{-contact order of }\varphi : 1\le j\le J\bigr\}.$$
\end{definition}

Thus the local contact order measures how rapidly a branch approaches the singularity in one variable, while the global contact order is the maximum of these local exponents over all singular points.

The above definition and Theorem \ref{def:loccont} are a consequence of the local Puiseux factorizations of the halfplane counterparts of our initial stable polynomials. Roughly speaking, the exponents $2L_j$ in our previous result on factorization, represent the \textit{local contact orders}, whereas the maximum of them is the \textit{contact order} of the RIF $K\ge2$. A specific property of the contact order is that it is always an even integer. The notion of the contact order will appear in the proof of Theorem \ref{singlulartangentlines}.

A useful fact for both tangential intersection Theorems (Theorems \ref{smoothtangentlines} and \ref{singlulartangentlines}) that the reader can find again in \cite{Clark} is that for generic values of a RIF, we have $$\frac{\partial{\phi_{\ell}}}{\partial z_2}(\zeta,g_j^{\alpha}(\zeta))=c\neq 0$$ away from the singularity, while $$\left|\frac{\partial\phi}{\partial z_2}(\zeta,g_j^{\alpha}(\zeta))\right|^{-1}\approx |\zeta-\omega_1|^{K_\phi}$$ when we approach the $z_1-$coordinate of the singularity, for all but finitely many $\alpha\in\mathbb{T}.$ The statement "for all but finitely many $\alpha\in\mathbb{T},$" is a subtle technicality that we need to include in the statement of our Theorem \ref{singlulartangentlines}.

\subsection{The first-order conditions: the works of Bayart and Kosi\'nski.}\label{smooth}
In the papers \cite{Bayart1} and \cite{CompositionsKosinski} the authors studied the problem of continuity of the composition operator whenever the self maps of the bidisc are smooth up to the closure of the bidisc. Both authors gave a characterization of smooth self maps that induce bounded composition operators. Let us restate the result in a geometric way.
\begin{proposition}\label{transverse}\textbf{(First-order condition is equivalent to transversality)}
Let $\Phi=(\phi,\psi)\in \mathcal O(\mathbb D^2,\mathbb D^2)\cap \mathcal{C}^2(\overline{\mathbb D^2}),$
and consider all $\zeta\in \mathbb T^2$ such that
$\Phi(\zeta)\in \mathbb T^2.$
Then the following are equivalent:
\begin{enumerate}
    \item The derivative $d_{\zeta}\Phi$ is invertible for all $\zeta\in\mathbb{T}^2$ such that $\Phi(\zeta)\in\mathbb{T}^2$.
    \item The boundary level curves
    $\mathcal{C}_{\phi(\zeta)}(\phi)\quad \mathrm{and}\quad
    \mathcal{C}_{\psi(\zeta)}(\psi)$
    are smooth for all $\zeta\in\mathbb{T}^2$ such that $\Phi(\zeta)\in\mathbb{T}^2$ and intersect transversely there.
    \item $C_{\Phi}:A^2_{\beta}(\mathbb{D}^2)\to A^2_{\beta}(\mathbb{D}^2)$ is bounded.
\end{enumerate}
\end{proposition}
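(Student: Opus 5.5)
The equivalence (1)$\Leftrightarrow$(3) is the first-order condition, Theorem \ref{characterization} (Bayart, Kosi\'nski), so I will take it as given. The real work is (1)$\Leftrightarrow$(2), which is a pointwise linear-algebra statement at each $\zeta\in\mathbb{T}^2$ with $\Phi(\zeta)\in\mathbb{T}^2$. Fix such a $\zeta$. I parametrize $\mathbb{T}^2$ near $\zeta$ by $\zeta_j=e^{i\theta_j}$, with tangent vectors $\partial_{\theta_j}=iz_j\partial_{z_j}$. Because $\Phi$ is holomorphic and $\mathcal{C}^2$ up to the boundary, $d_\zeta\Phi$ is the complex Jacobian matrix $\bigl(\partial_{z_k}\phi_j(\zeta)\bigr)$ with $\phi_1=\phi$ and $\phi_2=\psi$. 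Invertibility therefore means that the two rows $\nabla\phi(\zeta)=(\partial_1\phi,\partial_2\phi)$ and $\nabla\psi(\zeta)$ are $\mathbb{C}$-linearly independent.

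\textbf{Key step: a Julia--Carath\'eodory-type rigidity.} Since $|\phi|\le 1$ on $\overline{\mathbb{D}^2}$ and $|\phi(\zeta)|=1$, restricting to each one-variable slice $\lambda\mapsto\phi(\lambda,\zeta_2)$ and applying Julia's lemma or the Hopf boundary lemma gives
$$\zeta_k\,\partial_{z_k}\phi(\zeta)=\phi(\zeta)\,c_k,\qquad c_k\ge 0,\quad k=1,2.$$
The same holds for $\psi$ with constants $d_k\ge 0$. Then the restriction of $\arg\phi$ to $\mathbb{T}^2$ has gradient $(c_1,c_2)$ in the $\theta$-coordinates, because $\partial_{\theta_k}\log\phi=i\zeta_k\partial_k\phi/\phi=ic_k$. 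Likewise $\arg\psi$ has gradient $(d_1,d_2)$.

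\textbf{Deriving (1)$\Leftrightarrow$(2).} Writing $Z=\mathrm{diag}(\zeta_1,\zeta_2)$ and $D=\mathrm{diag}(\phi(\zeta),\psi(\zeta))$, we have
$$d_\zeta\Phi\cdot Z=D\begin{pmatrix}c_1&c_2\\ d_1&d_2\end{pmatrix}.$$
So $d_\zeta\Phi$ is invertible if and only if the real vectors $(c_1,c_2)$ and $(d_1,d_2)$ are linearly independent.

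For (1)$\Rightarrow$(2): independence forces both vectors to be nonzero. By the implicit function theorem, the level sets $\{\arg\phi=\arg\phi(\zeta)\}$ and $\{\arg\psi=\arg\psi(\zeta)\}$ on $\mathbb{T}^2$ are then $\mathcal{C}^1$ (indeed $\mathcal{C}^2$) curves near $\zeta$. Their normals are the two independent gradients, so they meet at a positive angle.

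For (2)$\Rightarrow$(1): transversality means the tangent lines differ. I will read "smooth curve" as "nondegenerate level curve," which the positive-angle definition implicitly requires, so that each gradient is nonzero and normal to its curve. Distinct tangent lines then give distinct normals, hence linear independence.

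\textbf{Main obstacle.} The delicate point is whether a level set can be a smooth curve while the gradient vanishes. For instance, $\phi$ might touch the torus only to second order, with $c=0$, so that $|\phi|<1$ nearby on $\mathbb{T}^2$ and the level set degenerates to a point. I handle this in two parts. First, since $\phi$ is holomorphic and bounded by $1$, Hopf's lemma forces $c_k>0$ unless $\phi$ is independent of $z_k$. Second, if $c=0$ the boundary-value level set $\mathcal{C}_{\phi(\zeta)}(\phi)$ fails to be a regular curve through $\zeta$, so the hypothesis in (2) is not met and the equivalence is preserved. The remaining routine check is the boundary version of Julia's lemma for $\mathcal{C}^2$ maps, which follows from the one-variable statement applied on slices.
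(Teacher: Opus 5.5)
This is correct and is the paper's argument. Your identity $d_\zeta\Phi\cdot\mathrm{diag}(\zeta_1,\zeta_2)=\mathrm{diag}(\phi(\zeta),\psi(\zeta))\,C$ is the paper's $D(u,v)=\mathrm{diag}(1/\alpha_1,1/\alpha_2)\,d_\zeta\Phi\,\mathrm{diag}(\zeta_1,\zeta_2)$ combined with the implicit function theorem. Your Julia--Carath\'eodory step also supplies the realness and nonnegativity of the entries of $C$, which the paper tacitly assumes when it writes $\phi=\alpha_1e^{iu}$ with $u$ real.

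One correction to your ``main obstacle'' remark. $c_k=0$ forces only the slice through $\zeta$ to be constant, not $\phi$ to be independent of $z_k$. So $c=0$ does not collapse the level set to a point; it places the whole cross $\{\zeta_1\}\times\mathbb{T}\cup\mathbb{T}\times\{\zeta_2\}$ inside it, and that is what actually rules out a vanishing gradient in (2)$\Rightarrow$(1). Genuine collapse to a point happens with a nonzero gradient, e.g.\ for $\psi=(z_1+2z_2)/3$ at $(\eta,\eta)$, where $(d_1,d_2)=(1/3,2/3)$. This is why (2) must be read through the level sets of $\arg\phi$ and $\arg\psi$, as both you and the paper do.
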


\begin{proof} We just need to prove (1) $\iff$ (2). We will provide a brief sketch of the proof. Write $\phi(\zeta)=\alpha_1 e^{iu}$ and $\psi(\zeta)=\alpha_2 e^{iv},$ where $u,v$ are real valued $\mathcal{C}^2-$smooth functions, with $u(\zeta)=v(\zeta).$ Then, the level curves can be written as
$$\mathcal{C}_{\alpha_1}(\phi)=\{u(\zeta)=0\} \quad \mathrm{and} \quad \mathcal{C}_{\alpha_2}(\psi)=\{v(\zeta)=0\}.$$
By the implicit function Theorem, they are smooth and intersect transversely if and only if $du(\zeta)$ and $dv(\zeta)$ are linearly independent, i.e. the matrix $D(u,v)$ is invertible, where 
$$D(u,v)=\begin{pmatrix}
1/\alpha_1& 0 \\
 0& 1/\alpha_2  
\end{pmatrix}\cdot
d_\zeta\Phi\cdot\begin{pmatrix}
 \zeta_1&0  \\
 0&\zeta_2  
\end{pmatrix}.$$
This equality comes from differentiating in polar co-ordinates and completes the proof.
\end{proof}

In his paper \cite{Bayart1}, Bayart also came up with a clever algorithm with which he was able to determine boundedness or not of the composition operator in higher dimensions, whenever the self map is induced by affine symbols. On the other hand, Kosi\'nski in his work provided a characterization for the unweighted Bergman space $A^2(\mathbb{D}^3)$ on the tridisc, again for smooth symbols. Later, in the paper \cite{Bayart2} Bayart characterized the smooth self maps of the tridisc on the Hardy space $H^2(\mathbb{D}^3),$ and provided several interesting new viewpoints in his more recent paper with Dorval \cite{Bayart3} on the weighted Bergman spaces of the tridisc $A^2_{\beta}(\mathbb{D}^3).$ 

A significant tool that appears in the previous work in the literature and is of great importance is the \textit{Julia-Carath\'eodory} Theorem, as formulated by Bayart in \cite{Bayart1}. We will also make a brief mention of this classical result on one of the proofs of our main contributions. Let us provide here its statement, as it appears in the paper of Bayart \cite{Bayart1}

\begin{lemma}
Let $\phi:\mathbb{D}\to\mathbb{D}$ be holomorphic $\mathcal{C}^{1}$ on $\mathbb{D} \cup \{\xi\}$, where $\xi \in \mathbb{T}$. Suppose moreover that $\phi(\xi)=\xi$. Then
$$\phi'(\xi)\in (0,+\infty).$$
\end{lemma}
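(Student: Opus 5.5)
We will prove the lemma by a first-order expansion of $\phi$ at $\xi$ along rays entering the disc, combined with the Schwarz--Pick lower bound. After composing with rotations we may assume $\xi=1$, so that $\phi(1)=1$. This is harmless: replacing $\phi$ by $\bar\xi\,\phi(\xi z)$ gives a function fixing $1$ with derivative $\phi'(\xi)$ at $1$.

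\emph{Step 1 (the derivative lies in the closed right half-plane).} Write $a:=\phi'(1)$. For $|\theta|<\pi/2$ and small $t>0$, the point $z_t:=1-te^{i\theta}$ lies in $\mathbb{D}$, since $|z_t|^2=1-2t\cos\theta+t^2<1$. By the $\mathcal{C}^1$ hypothesis on $\mathbb{D}\cup\{1\}$,
$$\phi(z_t)=1-a\,t\,e^{i\theta}+o(t).$$
Hence
$$|\phi(z_t)|^2=1-2t\,\mathrm{Re}\bigl(ae^{i\theta}\bigr)+o(t).$$
Since $\phi$ maps into $\mathbb{D}$, we have $1-|\phi(z_t)|^2>0$. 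Dividing by $t$ and letting $t\to0^+$ gives
$$\mathrm{Re}\bigl(ae^{i\theta}\bigr)\ge 0\quad\text{for every }\theta\in(-\pi/2,\pi/2).$$
Letting $\theta\to\pm\pi/2$ yields $\mp\,\mathrm{Im}\,a\ge0$, so $\mathrm{Im}\,a=0$. Taking $\theta=0$ yields $a\ge0$. Thus $\phi'(1)\in[0,+\infty)$.

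\emph{Step 2 (strict positivity).} This is the only point needing a genuine input beyond Taylor expansion, and the input is Schwarz--Pick. Set $b:=\phi(0)\in\mathbb{D}$. Schwarz--Pick applied to $\phi$ gives the classical estimate
$$1-|\phi(z)|\ \ge\ \frac{1-|b|}{1+|b|}\,(1-|z|),\qquad z\in\mathbb{D}.$$
To derive it, apply Schwarz's lemma to $\frac{\phi-b}{1-\bar b\phi}$, which gives $\frac{|\phi(z)|-|b|}{1-|b||\phi(z)|}\le|z|$, and rearrange. Now take $z=r\in(0,1)$, i.e. $\theta=0$ in Step 1. There $1-|\phi(r)|=a(1-r)+o(1-r)$. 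Dividing the estimate by $1-r$ and letting $r\to1^-$ gives
$$a\ \ge\ \frac{1-|b|}{1+|b|}\ >0.$$
Hence $\phi'(\xi)\in(0,+\infty)$.

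I expect no serious obstacle. The one point requiring care is Step 1: the $\mathcal{C}^1$ hypothesis is only at the single point $\xi$, not along an arc of $\mathbb{T}$. Consequently we cannot differentiate $|\phi|$ along the circle, and we must extract the vanishing of $\mathrm{Im}\,\phi'(\xi)$ from interior approach along all non-tangential rays. This is exactly what the limit $\theta\to\pm\pi/2$ accomplishes.
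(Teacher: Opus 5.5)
Your proof is correct, and it differs from the paper only in that the paper gives no proof at all. It quotes this statement from Bayart as the boundary Julia--Carath\'eodory theorem and uses it as a black box.

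\textbf{The classical route.} The argument behind that citation is Julia's lemma. The quantity $\beta:=\liminf_{z\to\xi}\frac{1-|\phi(z)|}{1-|z|}$ is always strictly positive. When $\beta$ is finite, the angular derivative exists and equals $\beta\,\overline{\xi}\,\phi(\xi)=\beta$. This requires no regularity of $\phi$ at $\xi$ beyond finiteness of $\beta$.

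\textbf{Your route.} You use the $\mathcal{C}^1$ hypothesis to make everything elementary. The first-order expansion along the non-tangential rays $1-te^{i\theta}$, together with $|\phi|<1$ alone, pins down the argument of $\phi'(\xi)$. The Schwarz--Pick estimate then supplies exactly the bound $\beta\ge\frac{1-|\phi(0)|}{1+|\phi(0)|}$, which is the positivity half of Julia's lemma. The result is a self-contained proof of precisely the stated version, at the cost of generality. Step 2 is also where the hypothesis $\phi(\mathbb{D})\subset\mathbb{D}$ genuinely enters, through $|\phi(0)|<1$. The unimodular constant $\phi\equiv\xi$ shows this hypothesis cannot be dropped, and it is exactly the case the paper later lands in when it applies the lemma to the slice $\psi(1,\cdot)$.

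\textbf{One point to make explicit.} State where the expansion $\phi(z_t)=1-ate^{i\theta}+o(t)$ comes from. Reading $\mathcal{C}^1$ as continuity of $\phi'$ up to $\xi$, integrate $\phi'$ along the segment $\{1-se^{i\theta}:0<s\le t\}$, which lies in $\mathbb{D}$ once $t<2\cos\theta$. The $o(t)$ term depends on $\theta$, but this is harmless because you let $t\to0^+$ before letting $\theta\to\pm\pi/2$.
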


For the purposes of our work, Theorem \ref{characterization} will be applied in the proof of Theorem \ref{characterization1}, specifically in the sufficiency part of it, whereas in the necessity part, we shall apply all of our geometric non-boundedness Theorems. For an application of the Julia-Carath\'eodory Theorem similar to the ideas of Kosi\'nski from the paper \cite{CompositionsKosinski}, see Section \ref{examples}.
\section{Proofs of the main results} \label{proofs} We divide the proofs into two parts. Section \ref{geometric} is devoted to the geometric non-boundedness results. Section \ref{transversal} shows the sufficiency part. The main result follows then as a corollary.
\subsection{Geometric non boundedness results}\label{geometric} We proceed with the geometric non-boundedness results.
Whenever convenient, after rotations, we may reduce to the case $\alpha_1=\alpha_2=1.$
\begin{theorem}\textbf{(Common exceptional line)} Let $\phi_1,\phi_2$ be RIFs. If there exist $\alpha_1,\alpha_2\in\mathbb{T}$ such that both level sets $\mathcal{C}_{\alpha_1}(\phi_1)$ and $\mathcal{C}_{\alpha_2}(\phi_2)$ contain the same vertical line $\{\zeta\in \mathbb{T}^2:\zeta_1=\tau\}$ (and analogously, horizontal), then the composition operator $C_{\Phi}:A^2_{\beta}(\mathbb{D}^2)\to A^2_{\beta}(\mathbb{D}^2)$ with symbol $\Phi=(\phi_1,\phi_2)$ is not bounded.
\end{theorem}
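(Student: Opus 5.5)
The plan is to use the factorization quoted above from the proof of Lemma 3.6 of \cite{Clark} to show that, near the common line, both $\phi_1-\alpha_1$ and $\phi_2-\alpha_2$ are controlled by the single quantity $|z_1-\tau|$. The preimage of a shrinking Carleson box then contains a product of a shrinking disc in $z_1$ with a \emph{fixed} set in $z_2$. Its volume decays only like $\delta^{\beta+2}$ instead of $\delta^{2\beta+4}$, and the non-boundedness criterion stated after the Carleson lemma finishes the argument.

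\textbf{Step 1 (factorization).} Since $\{\zeta_1=\tau\}\subset\mathcal{C}_{\alpha_\ell}(\phi_\ell)$ for $\ell=1,2$, we have $\widetilde{p}_\ell-\alpha_\ell p_\ell=(z_1-\tau)Q_\ell$ with $Q_\ell\in\mathbb{C}[z_1,z_2]$. (This is the factorization from Lemma 3.6 of \cite{Clark}, possibly after conjugating or replacing $\alpha_\ell$ by $\overline{\alpha_\ell}$, which does not matter.) Hence
$$|\phi_\ell(z)-\alpha_\ell|=|z_1-\tau|\,\frac{|Q_\ell(z)|}{|p_\ell(z)|},\qquad z\in\mathbb{D}^2.$$

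\textbf{Step 2 (fixed region in $z_2$).} We need a lower bound for $|p_\ell|$ near $\{z_1=\tau\}$ on a set of $z_2$ of positive measure. We may assume $p_\ell$ and $\widetilde{p}_\ell$ are coprime, as is standard for a RIF in reduced form. Then $p_\ell(\tau,\cdot)\not\equiv 0$. Indeed, otherwise $(z_1-\tau)$ would divide $p_\ell$, and by reflection it would also divide $\widetilde{p}_\ell$, contradicting coprimality. So $p_\ell(\tau,\cdot)$ has finitely many zeros. We choose a closed disc $E\subset\{|z_2|<1/2\}$ of positive area avoiding the zeros of $p_1(\tau,\cdot)$ and $p_2(\tau,\cdot)$. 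By continuity there are $\rho>0$ and $c>0$ with $|p_\ell|\ge c$ on $\{|z_1-\tau|\le\rho\}\times E$, for $\ell=1,2$. Together with boundedness of $Q_\ell$ on $\overline{\mathbb{D}^2}$, Step 1 gives
$$|\phi_\ell(z)-\alpha_\ell|\le C|z_1-\tau|\quad\text{on } \{|z_1-\tau|\le \rho\}\times E .$$
I expect this step to be the main obstacle. The danger is that $p_\ell$ vanishes identically along the line, or degenerates near $\{z_1=\tau\}$, and coprimality is exactly what rules this out.

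\textbf{Step 3 (volume comparison).} For $\delta<C\rho$ we get
$$\Phi^{-1}\big(S((\alpha_1,\alpha_2),\delta)\big)\supset\{z_1\in\mathbb{D}:|z_1-\tau|<\delta/C\}\times E.$$
Since $E$ is a fixed compact subset of $\mathbb{D}$, the weight $(1-|z_2|^2)^\beta$ is comparable to $1$ on $E$. Hence
$$V_\beta\big(\Phi^{-1}(S((\alpha_1,\alpha_2),\delta))\big)\gtrsim \delta^{\beta+2},$$
and the ratio with $V_\beta(S)\approx\delta^{2\beta+4}$ is at least a constant times $\delta^{-(\beta+2)}\to\infty$ for every $\beta>-1$. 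By the non-boundedness criterion, $C_\Phi$ is unbounded.

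\textbf{The case $\beta=-1$.} Here the same containment is used, but the Carleson condition for $H^2(\mathbb{D}^2)$ is applied to the pull-back of the measure $dA\times dA$ weighted as in the Hardy Carleson characterization, equivalently by letting $\beta\downarrow-1$ with the same test sets. Since the exponent gap $\beta+2=1$ is still positive, the same conclusion follows.

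The horizontal-line case is symmetric, with the roles of $z_1$ and $z_2$ interchanged.
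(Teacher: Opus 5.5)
For $\beta>-1$ your proof is the paper's proof. It uses the same factorization $\phi_\ell-\alpha_\ell=(z_1-\tau)Q_\ell/p_\ell$, the same kind of test set (a shrinking disc in $z_1$ times a fixed set in $z_2$), and the same comparison of $\delta^{\beta+2}$ with $\delta^{2\beta+4}$. Your Step 2 is more careful than the paper's version. The paper takes $\{|z_2|<1/2\}$ and only says $p_\ell$ does not vanish there ``by stability''. A bound $|p_\ell|\ge c$ that is uniform up to $z_1=\tau$ needs exactly the non-degeneracy of $p_\ell(\tau,\cdot)$ that you get from coprimality. Combined with Hurwitz's theorem, that argument even shows $p_\ell(\tau,\cdot)$ has no zeros in $\mathbb{D}$, which is why the paper's choice also works.

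The step that does not hold up is your paragraph on $\beta=-1$. Both the test set and the Carleson box have infinite $V_{-1}$-volume. Unboundedness on every $A^2_\beta$ with $\beta>-1$ does not transfer to $H^2(\mathbb{D}^2)$ by ``letting $\beta\downarrow-1$''. The paper's own proof is equally loose at $\beta=-1$.

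The Hardy case needs the boundary version of your argument:
\begin{itemize}
\item Let $m_2$ be normalized Lebesgue measure on $\mathbb{T}^2$ and $\mu=m_2\circ(\Phi^*)^{-1}$. Then $\|C_\Phi f\|_{H^2}^2=\int|f|^2\,d\mu$ for $f$ holomorphic on $\mathbb{D}^2$ and continuous on $\overline{\mathbb{D}^2}$.
\item Test boundedness against the normalized reproducing kernel at $w=(1-\delta)(\alpha_1,\alpha_2)$. This gives $\mu(\{z\in\overline{\mathbb{D}^2}:|z_j-\alpha_j|<\delta,\ j=1,2\})\lesssim\|C_\Phi\|^2\delta^2$.
\item Let $E'\subset\mathbb{T}$ be a closed arc avoiding the finitely many zeros of $p_1(\tau,\cdot)$ and $p_2(\tau,\cdot)$ on $\mathbb{T}$. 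Your factorization gives $|\phi_\ell^*(\zeta)-\alpha_\ell|\le C|\zeta_1-\tau|$ on $\{|\zeta_1-\tau|<\delta/C\}\times E'$.
\item Hence that $\mu$-measure is $\gtrsim\delta$, which is incompatible with the upper bound $\lesssim\delta^2$ as $\delta\to0$.
\end{itemize}
Here your device of avoiding the zeros of $p_\ell(\tau,\cdot)$ is genuinely needed, since on $\mathbb{T}$ these polynomials can vanish. For example, for $\kappa$ with $\tau=1$ one has $p(1,z_2)=1-z_2$.
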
  
\begin{proof}
We will prove the Theorem in the case that both RIFs have a common line on the $z_1-$variable on their level sets $\mathcal{C}_{1}(\phi_\ell), \ell=1,2.$
We need to find one family of two-dimensional Carleson boxes $S(\zeta,\delta)$ such that 
$$\lim_{\delta\to0}\frac{V_{\beta}(\Phi^{-1}(S(\zeta,\delta)))}{V_{\beta}(S(\zeta,\delta))}=\infty.$$ 
Consider the box $S((1,1),C\delta)$ for $\delta\in(0,1).$ Our goal is to show that $V_{\beta}(\Phi^{-1}(S((1,1),C\delta)))>C'\delta^{2+\beta},$ deeming continuity of the composition operator impossible. 
It is clear that
$$\Phi^{-1}(S((1,1),C\delta))=\Bigg\{(z_1,z_2)\in\mathbb{D}^2:\left|\phi_1(z_1,z_2)-1\right|<C\delta,|\phi_2(z_1,z_2)-1|<C\delta\Bigg\}.$$

At this point, we claim that there exist polynomials $Q_1,Q_2$ in two variables, such that $$\phi_1(z_1,z_2)-1=\frac{(z_1-1)Q_1(z_1,z_2)}{p_1(z_1,z_2)}, \quad Q_1\in\mathbb{C}[z_1,z_2],$$
and
$$\phi_2(z_1,z_2)-1=\frac{(z_1-1)Q_2(z_1,z_2)}{p_2(z_1,z_2)}, \quad Q_2\in\mathbb{C}[z_1,z_2],$$

\textbf{Proof of the claim:}
From Lemma 3.6. in the paper \cite{Clark}, our assumption essentially means that both RIFs have 1 as an exceptional value on the same variable. This implies that the sets $$\mathcal{C}_1(\phi_\ell)=\mathrm{clos}\{\zeta\in\mathbb{T}^2:\phi^*_{\ell}(\zeta)=1\},\quad \ell=1,2$$ both contain the line $\{1\}\times \mathbb{T},$ and moreover from page 14 of the same paper, we see that there exist $Q_1,Q_2\in \mathbb{C}[z_1,z_2]$ such that

$$\phi_1(z_1,z_2)-1=\frac{(z_1-1)Q_1(z_1,z_2)}{p_1(z_1,z_2)} \quad \mathrm{and} \quad \phi_2(z_1,z_2)-1=\frac{(z_1-1)Q_2(z_1,z_2)}{p_2(z_1,z_2)}, \quad (z_1,z_2)\in\mathbb{D}^2. $$

Pick now $\delta$ sufficiently small, and consider the set
$$A_{\delta}:=\{z\in\mathbb{D}^2:|z_1-1|<\delta,|z_2|<1/2\}.$$

We observe that the polynomials $p_1,p_2$ do not vanish on $A_\delta,$ due to stability (both polynomials vanish only on the two-torus for RIFs, not inside the bidisc). As a consequence, we receive constants $C_1,C_2>0$ such that
$$|\phi_1(z_1,z_2)-1|=\left|\frac{(z_1-1)Q_1(z_1,z_2)}{p_1(z_1,z_2)}\right|\leq C_1\delta, \quad z\in A_{\delta},$$
and moreover
$$|\phi_2(z_1,z_2)-1|=\left|\frac{(z_1-1)Q_2(z_1,z_2)}{p_2(z_1,z_2)}\right|\leq C_2\delta, \quad z\in A_{\delta}.$$
By setting $C=\max\{C_1,C_2\}$ we observe that
$A_{\delta}\subset\Phi^{-1}(S((1,1),C\delta)).$
Now 
\begin{align}
V_{\beta}(\Phi^{-1}(S((1,1),C\delta)))
&>V_{\beta}(A_{\delta})&&\\
&>\int_{\{z\in\mathbb{D}^2:|z_1-1|<\delta, |z_2|<1/2\}}(1-|z_1|^2)^{\beta}(1-|z_2|^2)^{\beta}dV(z_1,z_2) \notag&&\\
&\gtrsim\delta^{2+\beta}, \quad \forall \delta\in (0,1). \notag
\end{align}
where the implied constant is depending only on $\beta\ge -1.$
This finishes the proof.
\end{proof}
Next, we re-state and sketch the proof of the mixed case, for convenience.
\begin{theorem} Let $\Phi=(\phi,\psi),$ where $\phi$ is a RIF with exceptional value $\alpha$ on the $z_1-$variable, $\psi\in \mathcal{O}(\mathbb{D}^2,\mathbb{D})\cap \mathcal{C}^2(\overline{\mathbb{D}^2})$ such that $\partial_{z_2}\psi=0$ and $\psi(1,1)=1.$ Then the composition operator $C_{\Phi}:A^2_{\beta}(\mathbb{D}^2)\to A^2_{\beta}(\mathbb{D}^2)$ is not bounded for all $\beta\ge-1.$
\end{theorem}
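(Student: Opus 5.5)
We follow the proof of the common exceptional line theorem, replacing the second RIF by $\psi$. After a rotation in the first coordinate we may take $\alpha=1$ and the exceptional line of $\phi$ to be $\{1\}\times\mathbb{T}$. The step from Lemma 3.6 of \cite{Clark} used above then gives a polynomial $Q$ with
$$\phi(z_1,z_2)-1=\frac{(z_1-1)Q(z_1,z_2)}{p(z_1,z_2)},\qquad (z_1,z_2)\in\mathbb{D}^2.$$
The hypothesis $\partial_{z_2}\psi=0$ means that $\psi$ depends only on $z_1$. We write $\psi(z_1,z_2)=h(z_1)$ with $h\in\mathcal{O}(\mathbb{D},\mathbb{D})\cap\mathcal{C}^2(\overline{\mathbb{D}})$ and $h(1)=1$. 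Since $h$ is $\mathcal{C}^1$ up to the point $1$, the mean value estimate gives $|\psi(z_1,z_2)-1|=|h(z_1)-h(1)|\le \|h'\|_\infty|z_1-1|$. This is the mixed analogue of the factor $(z_1-1)$ in $\phi_2-1$. Geometrically, $\mathcal{C}_1(\psi)$ contains the vertical line $\{1\}\times\mathbb{T}$, so the two level sets share this line.

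Next, for small $\delta$ we use the same test set as before,
$$A_\delta=\{z\in\mathbb{D}^2:\ |z_1-1|<\delta,\ |z_2|<1/2\}.$$
On $\overline{A_\delta}$ (for $\delta\le 1/2$, say) the point $z_2$ stays in the interior of the disc. By stability, $p$ does not vanish on the compact set $\{|z_1|\le 1,\ |z_1-1|\le1/2,\ |z_2|\le 1/2\}$: zeros of $p$ on $\overline{\mathbb{D}^2}$ occur only on $\mathbb{T}^2$, and here $|z_2|\le 1/2$. So $|Q/p|\le C_1$ there, and $|\phi-1|\le C_1\delta$ on $A_\delta$. Together with $|\psi-1|\le C_2\delta$, where $C_2=\|h'\|_\infty$, this gives $A_\delta\subset\Phi^{-1}(S((1,1),C\delta))$ with $C=\max\{C_1,C_2\}$.

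Finally we compute the weighted volumes. We have
$$V_\beta(A_\delta)=\int_{|z_1-1|<\delta,\,|z_1|<1}(1-|z_1|^2)^\beta\,dA(z_1)\cdot\int_{|z_2|<1/2}(1-|z_2|^2)^\beta\,dA(z_2)\gtrsim \delta^{2+\beta},$$
while $V_\beta(S((1,1),C\delta))\approx \delta^{2\beta+4}$. The ratio is $\gtrsim\delta^{-(\beta+2)}\to\infty$ as $\delta\to0$ for every $\beta>-1$, and by the Carleson criterion $C_\Phi$ is unbounded. For the Hardy case $\beta=-1$, we read the volumes in the boundary-measure sense. The sublevel set then contains a set of the form (arc of length $\delta$ in $\zeta_1$) $\times$ (all of $\mathbb{T}$), which has measure $\approx\delta$ against $\delta^2$ for the box, so the same conclusion holds.

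The only step needing care, as in the RIF case, is the uniform bound on $Q/p$ near $\{z_1=1\}$. It is handled by keeping $|z_2|\le 1/2$, so that $p$ stays away from its zeros on $\mathbb{T}^2$. Every other step is routine.
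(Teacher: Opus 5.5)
Your argument matches the paper's everywhere except at the one step where the hypothesis on $\psi$ is used: the same factorization $\phi-1=(z_1-1)Q/p$, the same test set $A_\delta$, the same volume count. At that step you read $\partial_{z_2}\psi=0$ as an identity on $\mathbb{D}^2$ and conclude $\psi=h(z_1)$. Under that literal reading your proof is fine, but it covers only the special case $\psi=h(z_1)$. The hypothesis is meant pointwise, $\partial_{z_2}\psi(1,1)=0$. This is why the paper's proof invokes the Julia--Carath\'eodory theorem, which would be pointless if $\partial_{z_2}\psi\equiv 0$. It is also how the same condition is written in the converse direction of the half smooth--half singular characterization built from iterates of $\kappa$.

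Under the pointwise hypothesis your reduction is false. Take $\psi(z_1,z_2)=\frac{1+z_1}{2}+\frac{1}{8}(1-z_1)^2z_2$ and set $a=\frac{|1+z_1|}{2}$. Using $|1+z_1|^2+|1-z_1|^2\le 4$, one gets $|\psi|\le a+\frac{1-a^2}{2}=1-\frac{(1-a)^2}{2}\le 1$ on $\overline{\mathbb{D}}^2$, with equality only if $z_1=1$. So $\psi$ is a smooth self-map with $\psi(1,1)=1$ and $\partial_{z_2}\psi(1,1)=0$, yet $\partial_{z_2}\psi=\frac{1}{8}(1-z_1)^2\not\equiv 0$.

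The missing idea is a rigidity step on the boundary slice $g(w)=\psi(1,w)$:
\begin{itemize}
\item $g$ is holomorphic on $\mathbb{D}$, as a locally uniform limit of $\psi(r,\cdot)$.
\item $|g|\le 1$, $g$ is $\mathcal{C}^1$ up to $w=1$, $g(1)=1$ and $g'(1)=\partial_{z_2}\psi(1,1)=0$.
\item If $g$ were nonconstant it would map $\mathbb{D}$ into $\mathbb{D}$, and Julia--Carath\'eodory would force $g'(1)>0$. Hence $\psi(1,\cdot)\equiv 1$.
\end{itemize}
Integrating $\partial_{z_1}\psi$ along the segment from $(1,z_2)$ to $(z_1,z_2)$ then gives $|\psi(z_1,z_2)-1|\le \sup_{\overline{\mathbb{D}^2}}|\partial_{z_1}\psi|\,|z_1-1|$. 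This is exactly the estimate you need, and the rest of your proof goes through unchanged.

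A smaller point concerns your Hardy-space remark. The set $\{|\zeta_1-1|<\delta\}\times\mathbb{T}$ is not contained in the preimage of the box, because $Q/p$ is unbounded on $\mathbb{T}^2$ near a singularity lying on the exceptional line. For example, $\phi=-\kappa$ has $1$ as exceptional value along $\zeta_1=1$, but $\phi\equiv -1$ on the anti-diagonal $(e^{i\theta},e^{-i\theta})$. The fix is to restrict $\zeta_2$ to a fixed closed arc on which $p(1,\cdot)$ does not vanish; this is possible because $p$ has only finitely many zeros on $\mathbb{T}^2$. The resulting set still has measure $\approx\delta$, so the conclusion survives.
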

\begin{proof} Again, after a normalization $\alpha=1,$ using the same $A_{\delta}-$set as before, we have
$$|\phi_1(z_1,z_2)-1|=\left|\frac{(z_1-1)Q_1(z_1,z_2)}{p_1(z_1,z_2)}\right|\leq C_1\delta, \quad z\in A_{\delta}.$$
By our assumption on the partial of $\psi$ and the Julia-Carath\'eodory Theorem, we have that $\psi(1,z_2)=1.$ From smoothness of $\psi$ we get
$$|\psi(z_1,z_2)-1|=|\psi(z_1,z_2)-\psi(1,z_2)|\leq C_2|z_1-1|,\quad z\in A_{\delta}.$$
Then we proceed as before.
\end{proof}
\begin{theorem}\textbf{(Common analytic curve)} Let $\Phi=(\phi_1,\phi_2)$ induced by RIFs $\phi_\ell=\frac{\widetilde{p}_\ell}{p_\ell}$, where $\ell=1,2.$
Assume that there exists  $\alpha_1,\alpha_2\in\mathbb{T}$ and a closed arc $J\subset \mathbb{T}$ of positive length such that $g^{\alpha_1}_{j_1}(\zeta)=g^{\alpha_2}_{j_2}(\zeta)=g(\zeta)$, for $\zeta\in J$ and some indices $j_1,j_2,$ where $(\zeta,g(\zeta))$ is a parametrization of the level set $\mathcal{C}_{\alpha_\ell}(\phi_\ell),$ and $J$ avoids the $z_1-$coordinates of the (possible) singularity. Then $C_{\Phi}:A^2_{\beta}(\mathbb{D}^2)\to A^2_{\beta}(\mathbb{D}^2)$ is not bounded, for all $\beta>-1.$
\end{theorem}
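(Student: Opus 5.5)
The strategy follows the common exceptional line proof. I will exhibit a family of sets $A_\delta\subset\Phi^{-1}(S((\alpha_1,\alpha_2),C\delta))$ whose $V_\beta$-volume decays like $\delta^{2+\beta}$ rather than $\delta^{2\beta+4}$. After rotating in the target, assume $\alpha_1=\alpha_2=1$. Since $J$ avoids the $z_1$-coordinates of the singularities, I can shrink $J$ to a closed subarc. Then both RIFs extend holomorphically to a neighbourhood $U$ of the compact arc $\Gamma=\{(\zeta,g(\zeta)):\zeta\in J\}\subset\mathbb{T}^2$, because $p_\ell\neq0$ on $\Gamma$. Moreover, $g$ is real-analytic on $J$ by Lemma \ref{parametrization}.

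\textbf{Step 1 (vanishing along a complexified curve).} Complexify $g$ to a holomorphic function $G$ on a neighbourhood $W$ of $J$ in $\mathbb{C}$ with $|G|=1$ on $J$, and set $\Sigma=\{(z_1,G(z_1)):z_1\in W\}$. Each $\phi_\ell-1$ is holomorphic near $\Gamma$ and vanishes on the totally real arc $\Gamma\subset\Sigma$. By the identity principle in the one variable $z_1$, the function $z_1\mapsto\phi_\ell(z_1,G(z_1))-1$ vanishes on $W$, so $\phi_\ell-1$ vanishes on $\Sigma$. Weierstrass division in $U$ then gives
$$\phi_\ell(z_1,z_2)-1=(z_2-G(z_1))\,h_\ell(z_1,z_2),\qquad \ell=1,2,$$
with $h_\ell$ holomorphic and bounded near $\Gamma$. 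This is the analogue of the factorization $(z_1-\tau)Q_\ell$ in the exceptional line theorem. Consequently $|\phi_\ell(z)-1|\le C|z_2-G(z_1)|$ for $z\in U\cap\mathbb{D}^2$.

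\textbf{Step 2 (the test set and its volume).} Fix a compact subarc $J'\subset J$ and $0<r<1$ close to $1$. Define
$$A_\delta=\{z\in\mathbb{D}^2:\ z_1=\rho e^{it},\ e^{it}\in J',\ r<\rho<1,\ |z_2-G(z_1)|<\delta\}.$$
Shrinking $r$ toward $1$ keeps $A_\delta\subset U$. By Step 1, $A_\delta\subset\Phi^{-1}(S((1,1),C\delta))$.

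For fixed $z_1$, I need the $z_2$-slice to have a disc-intersection of size comparable to a Carleson box. The point $G(z_1)$ lies within $O(1-|z_1|)$ of $\mathbb{T}$. It may lie inside or outside the closed disc, but since $\phi_2$ is holomorphic near $\Gamma$ and $|\phi_2|<1$ in $\mathbb{D}^2$, one has $|G(z_1)|\ge 1-O(1-|z_1|)$ for $|z_1|<1$ near $J$. I will then restrict to $z_1$ with $1-|z_1|\le c\delta$. On that set the slice $\{z_2\in\mathbb{D}:|z_2-G(z_1)|<\delta\}$ contains a Carleson-type region of weighted area $\gtrsim\delta^{2+\beta}$. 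The $z_1$-region $\{e^{it}\in J',\ 1-c\delta<\rho<1\}$ has weighted area $\approx|J'|\,\delta^{1+\beta}$.

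This gives only $\delta^{3+2\beta}$, which already beats $\delta^{4+2\beta}$, so the ratio tends to infinity. Summing over roughly $\delta^{-1}$ disjoint pieces of $J'$ is unnecessary, since the $z_1$-integral already runs over the full arc. Hence
$$\frac{V_\beta(\Phi^{-1}(S((1,1),C\delta)))}{V_\beta(S((1,1),C\delta))}\gtrsim\frac{\delta^{3+2\beta}}{\delta^{4+2\beta}}=\delta^{-1}\to\infty.$$
This holds for every $\beta>-1$, and the Carleson criterion stated above gives non-boundedness.

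\textbf{Expected obstacle.} The delicate step is the slice estimate: showing that for $1-|z_1|\lesssim\delta$ the disc $D(G(z_1),\delta)$ meets $\mathbb{D}$ in a set of weighted measure $\gtrsim\delta^{2+\beta}$, uniformly in $z_1$. This needs a quantitative control $\big||G(z_1)|-1\big|\lesssim 1-|z_1|$ from the real-analyticity of $g$ and the mapping property of $\phi_\ell$. If the constants misbehave, I would instead take $1-|z_1|\le c\delta$ with a small $c$, so that $G(z_1)$ lies within $\delta/2$ of $\mathbb{T}$; the slice then contains a half-disc of radius $\delta/2$ at a unimodular point, whose weighted area is $\approx\delta^{2+\beta}$.
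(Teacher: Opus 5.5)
Your proposal is correct. It reaches the same test set and the same count $\delta^{\beta+1}\cdot\delta^{\beta+2}=\delta^{2\beta+3}$ as the paper, but it obtains the key pointwise bound by a different route.

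\textbf{The paper's route.} The paper never complexifies $g$. It works with the radial projection $\zeta=z_1/|z_1|$ and uses that both partial derivatives of $\phi_\ell$ are bounded on a neighbourhood $K_{\rho_0}$ of $\Gamma$. Integrating along the $z_2$-segment to $g(\zeta)$ and along the radius gives $|\phi_\ell(r\zeta,z_2)-\alpha_\ell|\lesssim (1-r)+|z_2-g(\zeta)|$. Its $z_2$-discs are then centred at the unimodular point $g(\zeta)$, so the slice estimate you flag as the main obstacle never arises.

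\textbf{Your route.} You extend $g$ holomorphically to $G$ and use the identity principle to see that $\phi_\ell-1$ vanishes on the whole complex curve $\Sigma=\{z_2=G(z_1)\}$. Division then gives $\phi_\ell-1=(z_2-G(z_1))h_\ell$. This is the exact analogue of the factorization $(z_1-\tau)Q_\ell$ in the exceptional-line theorem, and it makes the mechanism transparent: $\Phi$ collapses $\Sigma$ to the point $(\alpha_1,\alpha_2)$.

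\textbf{Trade-off.} Your argument needs holomorphy of $\phi_\ell$ across $\Gamma$ and real-analyticity of $g$. So the identity-principle step does not transfer verbatim to the mixed case, where $\psi$ is only $\mathcal{C}^2(\overline{\mathbb{D}^2})$. There you would have to fall back on a Lipschitz estimate, which is exactly the paper's argument.

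\textbf{A small correction in your slice step.} The bound you actually need is
\[
|G(z_1)|-1\le |G(z_1)-G(z_1/|z_1|)|\lesssim 1-|z_1|,
\]
and it follows from the Lipschitz continuity of $G$ together with $|G|=1$ on $J$. The mapping property $|\phi_2|<1$ gives the opposite inequality. In fact $|G(z_1)|\ge 1$ for $|z_1|<1$ near $J$, since $\phi_2=1$ on $\Sigma$, so $G(z_1)$ never lies inside the disc. Your fallback, restricting to $1-|z_1|\le c\delta$ with $c$ small relative to the Lipschitz constant, is what closes this step.
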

\begin{remark} There are concrete examples that show that $\mathcal{C}_{\alpha_1}(\phi_1)\cap\mathcal{C}_{\alpha_2}(\phi_2)=\{(\zeta,g(\zeta))\}$ can occur for two different RIFs. See pages 454-455 in the paper \cite{Pisa}.
\end{remark}
\begin{proof}
The arc $J$ avoids the $z_1-$coordinates of the possible singularities of $\phi_\ell, \ell=1,2,$ hence, along the assumed common graph $\Gamma:=\{(\zeta,g(\zeta)),\zeta\in J\}.$ We know that  $$\frac{\partial{\phi_{\ell}}}{\partial z_2}(\zeta,g(\zeta))\neq 0, \quad \zeta\in J $$
Moreover, on $\mathcal{C}_{\alpha_{\ell}}(\phi_{\ell})$ for $\ell=1,2,\quad \phi_{\ell}(\zeta,g(\zeta))=\alpha_{\ell}.$ Since RIFs do not possess any singularities in $\mathbb{T}\times \mathbb{D} \quad \mathrm{or}\quad\mathbb{D}\times\mathbb{T}$ (this in fact comes again by the stability of the polynomials $p_{\ell}$), there exists a $\rho_0>0$ such that in the set
$$K_{\rho_0}:=\Bigg\{(r\zeta,z_2)\in\mathbb{D}^2:\zeta\in J,1-\rho_0\leq r\leq 1,|z_2-g(\zeta)|<\rho_0\Bigg\}$$ the partial derivatives of both RIFs are bounded from above, i.e.
$$M_{1,\ell}=\sup_{K_{\rho_0}}\left|\frac{\partial \phi_{\ell}}{\partial z_1}\right|<\infty \quad \mathrm{and} \quad M_{2,\ell}=\sup_{K_{\rho_0}}\left|\frac{\partial \phi_{\ell}}{\partial z_2}\right|<\infty.$$
This implies that
$$|\phi_{\ell}(\zeta,z_2)-\alpha_{\ell}|=\left|\int_{z_2}^{g(\zeta)}\frac{\partial{\phi}_{\ell}}{\partial z_2}(\zeta,w)dw\right|\lesssim|z_2-g(\zeta)|,\quad (1).$$
Moreover,
$$|\phi_{\ell}(r\zeta,z_2)-\phi_{\ell}(\zeta,z_2)|=\left|\int_{r}^{1}\zeta \frac{\partial \phi_{\ell}}{\partial z_1}(t\zeta,z_2)dt\right|\lesssim M_{\ell,1}(1-r), \quad (2)$$
By (1) and (2) and the triangle inequality, we obtain
$$|\phi_\ell(r\zeta,z_2)-\alpha_\ell|\lesssim M_{1,\ell}(1-r)+|z_2-g(\zeta)|\lesssim\delta, \quad \forall z\in A_{\delta},$$
where $$A_{\delta}:=\Bigg\{(r\zeta,z_2)\in\mathbb{D}^2:\zeta\in J:1-r<\delta/C,|z_2-g(\zeta)|<\delta/C\Bigg\}$$
where $C=\max\{M_{i,j}\}>0$, and $i,j=1,2.$ This, in turn, implies that
$$A_{\delta}\subset \Phi^{-1}(S(\alpha_1,\alpha_2),C\delta).$$ By Fubini's Theorem, we observe
\begin{align}
V_{\beta}(\Phi^{-1}(S((\alpha_1,\alpha_2),C\delta)))\gtrsim \notag& V_{\beta}(A_{\delta})&&\\ 
=&\left(\int_{\{r\zeta:\zeta\in J,1-r<\delta/C\}}dA_{\beta}(z_1)\right)\left(\int_{\{z\in\mathbb{D}:|z_2-g(\zeta)|<\delta/C\}}dA_{\beta}(z_2)\right) \notag&&\\ \notag
\approx&\delta^{\beta+1}\delta^{\beta+2}&&\\
\approx&\delta^{2\beta+3}. 
\end{align}
This suffices to prove that the composition operator is not bounded.
\end{proof}
\begin{remark} For the case of mixed symbols $\Phi=(\phi,\psi)$ where $\phi$ is a RIF and $\psi\in\mathcal{O}(\mathbb{D}^2,\mathbb{D})\cap \mathcal{C}^{2}(\overline{\mathbb{D}^2})$, we consider $g$ to be the analytic curve that parametrizes the level set of $\psi$ via the \textit{implicit function Theorem} and we proceed similarly with the estimates.
\end{remark}
A corollary we receive is
\begin{corollary} Let $\Phi=(\phi_1,\phi_2)$ be two RIFs induced by stable polynomials $p_1,p_2\in\mathbb{C}[z_1,z_2]$ on the bidisc and denote by $q_1,q_2$ the corresponding polynomials defined in $\mathbb{H}^2,$ after passing from $\mathbb{D}^2$ to $\mathbb{H}^2$ with a Cayley transform. Assume that $\overline{q_1}-\alpha_1q_1$ and $\overline{q_2}-\alpha_2q_2$ share a common factor in their Weierstrass-Puiseux expansion for some pair $\vec{\alpha}=(\alpha_1,\alpha_2).$ Then the composition operator $C_{\Phi}:A^2_{\beta}(\mathbb{D}^2)\to A^2_{\beta}(\mathbb{D}^2)$ induced by RIFs $\phi_1=\widetilde{p}_1/p_1,\quad \phi_2=\widetilde{p}_2/p_2$ is not bounded, for all $\beta\ge-1$.
\end{corollary}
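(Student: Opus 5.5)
The plan is to reduce the corollary to the Common analytic curve Theorem \ref{exceptionalcurve}. The main work is translating a shared factor in the Weierstrass--Puiseux factorizations of $\overline{q_1}-\alpha_1 q_1$ and $\overline{q_2}-\alpha_2 q_2$ into a shared arc of analytic graphs in $\mathcal{C}_{\alpha_1}(\phi_1)\cap\mathcal{C}_{\alpha_2}(\phi_2)$. After that, the volume estimate $V_\beta(\Phi^{-1}(S))\gtrsim\delta^{2\beta+3}$ from that proof can be quoted verbatim.

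First, I would fix the pair $\vec\alpha=(\alpha_1,\alpha_2)$ and use the second factorization theorem for $\overline{q}-\alpha q$. By hypothesis there are indices $j,m$ for $q_1$ and $j',m'$ for $q_2$ such that the factors coincide near the origin:
$$z_1+q_{j}(z_2)+z_2^{2L_j}\psi^{\alpha_1}_{j,m}(z_2)=z_1+\widetilde q_{j'}(z_2)+z_2^{2L_{j'}}\widetilde\psi^{\alpha_2}_{j',m'}(z_2).$$
Each such factor vanishes exactly on the graph $z_1=-h(z_2)$, where $h(x)=q_j(x)+x^{2L_j}\psi^{\alpha_1}_{j,m}(x)$ is real analytic with real values for real $x$ near $0$. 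This graph therefore lies in $\{\overline{q_1}=\alpha_1 q_1\}\cap\mathbb{R}^2$, which is the half-plane image of $\mathcal{C}_{\alpha_1}(\phi_1)$, and likewise for $\phi_2$.

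Next, I would pull back by the Cayley transform $m^{-1}$ in each variable. The common real curve becomes a common analytic curve in $\mathbb{T}^2$ lying in both level sets. Since $h'(0)=q_j'(0)>0$, the curve is not vertical, so over a small arc it can be written as a graph over the $\zeta_1$ variable, $\zeta\mapsto(\zeta,g(\zeta))$. By Lemma \ref{parametrization}, $g$ must agree on that arc with some branch $g^{\alpha_1}_{j_1}$ of $\phi_1$ and some branch $g^{\alpha_2}_{j_2}$ of $\phi_2$, because two analytic branches agreeing on a set with accumulation points agree identically.

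Finally, to satisfy the hypothesis that $J$ avoids the singular $z_1$-coordinates, I would shrink the arc. The Puiseux factorization is centred at a point (typically a singularity, mapped to the origin), but the common curve persists on a punctured neighbourhood. So I would take a closed subarc $J$ of positive length not containing the $z_1$-coordinate of any singularity of $\phi_1$ or $\phi_2$; this is possible since there are finitely many singularities. Theorem \ref{exceptionalcurve} then gives unboundedness for all $\beta$.

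The main obstacle is the identification step. One must check that the common Puiseux factor really yields the same real curve in both level sets, not merely the same leading polynomial $q_j$. This is exactly why the hypothesis requires a common full factor, including the $\psi^{\alpha}$ parts, rather than agreement of the $q_j$ alone. One must also check that the Cayley transform maps the real branch onto a genuine graph over an arc of $\mathbb{T}$. I would handle the first point by noting that the real-analyticity of $\psi^{\alpha}_{j,m}$ makes its zero set real, and the second by $h'(0)>0$ together with the local diffeomorphism property of $m$.
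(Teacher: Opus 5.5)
Your proposal is correct and follows the route the paper intends. The paper states this corollary without a separate proof, as the algebraic form of Theorem \ref{exceptionalcurve}; it relies on the Cayley-transform correspondence between the factors $z_1+q_j(z_2)+z_2^{2L_j}\psi^{\alpha}_{j,m}(z_2)$ of $\overline{q}-\alpha q$ and the analytic branches $g^{\alpha}_j$ of $\mathcal{C}_{\alpha}(\phi)$, and your steps make this reduction explicit: real-analyticity of the common factor, inverting the graph via $q_j'(0)>0$, matching with the branches of Lemma \ref{parametrization} by the identity theorem, and shrinking to an arc $J$ that avoids the singular $z_1$-coordinates.
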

We now turn to the smooth tangential intersection non boundedness result.

\begin{theorem}\textbf{(Smooth tangential intersection)} Let $\Phi=(\phi_1,\phi_2)$ where $\phi_1,\phi_2$ RIFs and assume that there exist two analytic curves $(\zeta,g_{j_1}^{\phi_1,\alpha_1}(\zeta))\in\mathcal{C}_{{\alpha}_1}(\phi_1)$ and $(\zeta,g^{\alpha_2,\phi_2}_{j_2}(\zeta))\in\mathcal{C}_{\alpha_2}(\phi_2),$ for some $\alpha_1,\alpha_2\in\mathbb{T}$ such that they intersect tangentially at a point $\zeta_0\in\mathbb{T}^2$ away from the (possible) singularities, i.e. there exists an exponent $M>1$ such that 
$$|g^{\alpha_1,\phi_1}_{j_1}(\zeta)-g_{j_2}^{\alpha_2,\phi_2}(\zeta)|\approx |\zeta-\zeta_0|^M$$
Then, the composition operator $C_{\Phi}:A^2_{\beta}(\mathbb{D}^2)\to A^2_{\beta}(\mathbb{D}^2)$ is not bounded. 
\end{theorem}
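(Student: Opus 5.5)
We follow the scheme of the common analytic curve theorem: we exhibit a Carleson box centred at $(\alpha_1,\alpha_2)$ whose preimage under $\Phi$ is much larger than the box itself. Write $\zeta_0=(\eta_1,\eta_2)$, $g_1:=g^{\alpha_1,\phi_1}_{j_1}$, $g_2:=g^{\alpha_2,\phi_2}_{j_2}$, and after rotations assume $\alpha_1=\alpha_2=1$.

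Since $\zeta_0$ is away from the singularities of both RIFs, there are a closed arc $J\ni\eta_1$ and a $\rho_0>0$ such that $\phi_1,\phi_2$ are smooth up to the boundary on the set
$$K_{\rho_0}=\{(r\zeta,z_2):\zeta\in J,\ 1-\rho_0\le r\le 1,\ |z_2-g_1(\zeta)|<\rho_0\}.$$
On $K_{\rho_0}$ both partial derivatives of $\phi_\ell$ are bounded, exactly as in the common curve proof.

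The key estimate replaces the single curve $g$ by $g_1$ and uses the tangency hypothesis for the second coordinate. For $(r\zeta,z_2)\in K_{\rho_0}$, estimates (1) and (2) from the common curve proof give
$$|\phi_1(r\zeta,z_2)-1|\lesssim (1-r)+|z_2-g_1(\zeta)|.$$
For $\phi_2$ we integrate in $z_2$ from $g_2(\zeta)$ rather than from $g_1(\zeta)$. Using $\phi_2(\zeta,g_2(\zeta))=1$ and the triangle inequality, we obtain
$$|\phi_2(r\zeta,z_2)-1|\lesssim (1-r)+|z_2-g_1(\zeta)|+|g_1(\zeta)-g_2(\zeta)|\lesssim (1-r)+|z_2-g_1(\zeta)|+|\zeta-\eta_1|^M .$$
Now define
$$A_\delta=\{(r\zeta,z_2):|\zeta-\eta_1|<c\,\delta^{1/M},\ 1-r<c\delta,\ |z_2-g_1(\zeta)|<c\delta\}.$$
For $c$ small, this gives $A_\delta\subset\Phi^{-1}(S((1,1),\delta))$.

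By Fubini, the volume of $A_\delta$ factors. The $z_1$-factor is a boundary arc of length $\approx\delta^{1/M}$ times a radial layer of width $\delta$ carrying weight $(1-r)^\beta$, which gives $\approx\delta^{1/M}\delta^{\beta+1}$. The $z_2$-factor is a disc of radius $\delta$ centred at the boundary point $g_1(\zeta)$, which gives $\approx\delta^{\beta+2}$. Hence
$$V_\beta(\Phi^{-1}(S((1,1),\delta)))\gtrsim \delta^{2\beta+3+1/M}.$$
Dividing by $V_\beta(S)\approx\delta^{2\beta+4}$ gives the ratio $\delta^{1/M-1}$, which tends to $\infty$ as $\delta\to0$ precisely because $M>1$. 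The non-boundedness criterion of Section \ref{Bergman} then applies.

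The main obstacle is controlling $|\phi_2-1|$ by $|z_2-g_2(\zeta)|$ uniformly near the boundary. This needs $\phi_2$ to be Lipschitz on a neighbourhood of $\zeta_0$ in $\overline{\mathbb D^2}$, which holds because $p_2$ does not vanish near a non-singular boundary point. It also needs $g_1(\zeta)$, and the whole disc of radius $c\delta$ about it, to stay in the region where the $z_2$-integration from $g_2(\zeta)$ is legitimate; this holds because $|g_1-g_2|\lesssim\delta$ on the chosen arc. If only the upper bound $\lesssim$ in the tangency assumption is available, the argument still goes through, since only that direction is used.
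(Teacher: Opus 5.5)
Your proposal is correct and is essentially the paper's own argument. It uses the same Lipschitz bounds for $|\phi_\ell-\alpha_\ell|$ near the nonsingular point, the same triangle inequality through $g_1$ to absorb $|g_1-g_2|\lesssim|\zeta-\eta_1|^M$, the same set $A_\delta$ over an arc of length $\approx\delta^{1/M}$, and the same volume count $\delta^{2\beta+3+1/M}$ against $\delta^{2\beta+4}$ (and you are right that only the upper bound in the tangency hypothesis is used).
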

\begin{proof} The strategy for the proof of our previous result will be followed after some necessary changes. We choose to include this proof separately to emphasize here that the two curves do not share an arc; they meet only on a point of tangential intersection. Moreover, the tangential intersection is away from the singularity, therefore, we can apply local arguments on the partials and take $\partial_{z_i}\phi_{\ell}(\zeta,g_{j_\ell}^{\alpha_{\ell},\phi_{\ell}}(\zeta))=c_{i\ell}\neq0,$ for $\ell=1,2.$ Therefore, similar estimates to our previous result hold. More specifically

By applying the same ideas, we obtain two local estimates

$$|\phi_1(r\zeta,z_2)-\alpha_1|\lesssim (1-r)+|z_2-g^{\alpha_1,\phi_1}_{j_1}(\zeta)|,$$
and
\begin{align}
|\phi_2(r\zeta,z_2)-\alpha_2|&\lesssim (1-r)+|z_2-g^{\phi_2,\alpha_2}_{j_2}(\zeta)| \notag&&\\
&\lesssim(1-r)+|g_{j_1}^{\phi_1,\alpha_1}(\zeta)-g_{j_2}^{\phi_2,\alpha_2}(\zeta)|+|z_2-g_{j_1}^{\phi_1,\alpha_1}(\zeta)|\notag&&\\
&\approx (1-r)+|\zeta-\zeta_0|^M+|z_2-g^{\phi_1,\alpha_1}_{j_1}(\zeta)|
.\end{align}
In a similar manner to the previous proof we need to consider the set

$$A_{\delta}:=\Bigg\{(r\zeta,z_2)\in\mathbb{D}^2:\zeta\in I,\quad|\zeta-\zeta_0|^M<C_1\delta,\quad 1-r<C_2\delta, \quad|z_2-g_{j_1}^{\phi_1,\alpha_1}(\zeta)|<\delta\Bigg\},$$

and observe $A_{\delta}\subset \Phi^{-1}(S((\alpha_1,\alpha_2),\delta)).$
We will now estimate the volume of $A_{\delta}.$

\begin{align}
V_{\beta}(\Phi^{-1}(S((\alpha_1,\alpha_2),\delta)))>V_{\delta}(A_{\beta})=&\int_{\{(r\zeta,z_1)\in\mathbb{D}^2:\zeta\in I,|\zeta-\zeta_0|^M<C_1\delta,1-r<C_2\delta, |z_2-g^{\alpha_1,\phi_1}_{j_1}(\zeta)|<\delta\}}dV_{\beta}(z_1,z_2)\notag&&\\
=&\left(\int_{\{|\zeta-\zeta_0|^M<C_1\delta,1-r<C_2\delta\}}dA_{\beta}(z_1)\right)\left(\int_{|z_2-g^{\alpha_1,\phi_1}_{j_1}(\zeta)|<\delta}dA_{\beta}(z_2)\right)\notag&&\\
\approx &\delta^{1/M}\delta^{\beta+1}\delta^{\beta+2}&&\\
\approx &\delta^{2\beta+3+\frac{1}{M}} \notag
\end{align}
Since $M>1,$ this suffices to show non-boundedness of the composition operator.
\end{proof}
\begin{remark} As before, for the mixed case of symbols, the only change is that we should replace one of the $g^{\alpha_\ell,\phi_\ell}_{j_{\ell}}(\zeta)$ with $g,$ where g is a smooth local parametrization of the level set of $\psi\in\mathcal{O}(\mathbb{D}^2,\mathbb{D})\cap \mathcal{C}^2(\overline{\mathbb{D}^2}),$ coming again from the \textit{implicit function Theorem}.
\end{remark}
\begin{remark} As we stated in Section \ref{results}, the number $M$ is not to be confused with the contact order (or order of contact) of these two analytic curves. In principle, the level curves we assume that intersect tangentially are coming from two separate RIFs, and moreover the intersection happens in a point of the bitorus where both functions are smooth.
\end{remark}
Now we pass to the proof of Theorem \ref{singlulartangentlines}. Before we proceed with the details, let us comment on its significance. Out of all the geometric non-boundedness conditions, this Theorem is the most unique due to the fact that non-boundedness is quantitatively related to the contact orders $K_{{\phi_1}},K_{\phi_{2}}\ge2$ and the order in which these two branches are tangential to each other.
This fact is what prevents us from obtaining a full boundedness criterion for all geometric cases, working uniformly for all $\beta>-1.$ 

Note that from our statement, one could be tempted to assume that $M=\max\{K_{\phi_1},K_{\phi_2}\}.$ This would be true if we had the contact orders of the branches coming from one RIF. In our setting the branches come from two different RIFs, and nothing excludes the possibility of coefficients of the difference $g^{\alpha_1,\phi_1}_{j_1}(\zeta)-g_{j_2}^{\alpha_2,\phi_2}(\zeta)$ canceling each other in higher order than $K.$ Note that our criterion cannot decide non-boundedness for smaller $M$ than the stated.
\begin{theorem}\textbf{(Singular tangential intersection)} Let $\Phi=(\phi_1,\phi_2)$ where $\phi_1,\phi_2$ RIFs with a common singularity $\omega=(\omega_1,\omega_2)\in\mathbb{T}^2$ and generic values $\alpha_1,\alpha_2$ on $\omega,$ satisfying Theorem 5.6. of \cite{Clark}. Assume that there exist two analytic curves $(\zeta,g_{j_1}^{\phi_1,\alpha_1}(\zeta))\in\mathcal{C}_{{\alpha}_1}(\phi_1)$ and $(\zeta,g^{\alpha_2,\phi_2}_{j_2}(\zeta))\in\mathcal{C}_{\alpha_2}(\phi_2)$ with respective contact orders $K_{\phi_1}, K_{\phi_2}\ge 2$ such that they intersect tangentially at $\omega\in\mathbb{T}^2$ with order $M>2K+1,$ where $K:=\max\{K_{\phi_1},K_{\phi_2}\}$ i.e.
$$|g^{\alpha_1,\phi_1}_{j_1}(\zeta)-g_{j_2}^{\alpha_2,\phi_2}(\zeta)|\approx |\zeta-\omega_1|^M$$
Then, the composition operator $C_{\Phi}:A^2_{\beta}(\mathbb{D}^2)\to A^2_{\beta}(\mathbb{D}^2)$ is not bounded for all $M>K(2\beta+4)+1$ or equivalently $-1<\beta<\frac{M-1}{2K}-2.$  
\end{theorem}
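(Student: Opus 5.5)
We follow the scheme of the smooth tangential intersection theorem: build an explicit set $A_\delta\subset\Phi^{-1}(S((\alpha_1,\alpha_2),C\delta))$ and compare $V_\beta(A_\delta)$ with $\delta^{2\beta+4}$. The new feature is that near the common singularity $\omega$ the derivatives $\partial_{z_2}\phi_\ell$ blow up along the branches. By the fact quoted from \cite{Clark} for generic values (this is where the hypothesis ``satisfying Theorem 5.6 of \cite{Clark}'' is used), we have $|\partial_{z_2}\phi_\ell(\zeta,g^{\alpha_\ell,\phi_\ell}_{j_\ell}(\zeta))|\approx|\zeta-\omega_1|^{-K_{\phi_\ell}}$. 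So the tube around the branch in which $|\phi_\ell-\alpha_\ell|<\delta$ has $z_2$-width only about $\delta|\zeta-\omega_1|^{K}$ rather than $\delta$. After rotations we take $\omega=(1,1)$ and $\alpha_1=\alpha_2=1$, and we write $g_\ell:=g^{\alpha_\ell,\phi_\ell}_{j_\ell}$.

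\textbf{Step 1 (local estimates).} Fix $\zeta\in\mathbb{T}$ with $|\zeta-1|=s$ small and $(r\zeta,z_2)$ in a region $\Omega_s$ of the form $1-r\lesssim s^{K'}$, $|z_2-g_1(\zeta)|\lesssim s^{K'}$, for some exponent $K'$ to be chosen. On $\Omega_s$ we want Lipschitz-type bounds
\[
|\partial_{z_i}\phi_\ell|\lesssim s^{-K}, \qquad i,\ell=1,2 .
\]
We would derive these from the Puiseux factorization (Theorem \ref{Puiseux1} and its $\alpha$-version) transported by the Cayley transform. The factor $z_1+q_j(z_2)+z_2^{2L_j}\psi_j$ has $|\mathrm{Im}|\gtrsim s^{2L_j}$ near the branch, so $|p_\ell|$ stays comparable to its value on the branch. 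That comparability only holds in a neighbourhood whose size is a power of $s$, and this is what dictates $K'$; we expect $K'=K$ to suffice. Integrating as in the proof of Theorem \ref{exceptionalcurve}, radially in $z_1$ and along the segment in $z_2$, then gives
\[
|\phi_1(r\zeta,z_2)-1|\lesssim s^{-K}\bigl((1-r)+|z_2-g_1(\zeta)|\bigr),
\]
\[
|\phi_2(r\zeta,z_2)-1|\lesssim s^{-K}\bigl((1-r)+|z_2-g_1(\zeta)|+s^M\bigr),
\]
where the second bound uses the triangle inequality with $|g_1-g_2|\approx s^M$.

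\textbf{Step 2 (the test set).} Define
\[
A_\delta=\{(r\zeta,z_2):\ s^{M-K}<c\delta,\ 1-r<c\delta s^K,\ |z_2-g_1(\zeta)|<c\delta s^K\}.
\]
By Step 1, $A_\delta\subset\Phi^{-1}(S((1,1),\delta))$. We must also check that $A_\delta\subset\Omega_s$, which requires $\delta s^K\lesssim s^{K'}$. The fact that this holds for small $\delta$ is where the hypothesis $M>2K+1$ should enter, via $s\lesssim\delta^{1/(M-K)}$.

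\textbf{Step 3 (volume).} By Fubini,
\[
V_\beta(A_\delta)\approx\int_{s<(c\delta)^{1/(M-K)}}(\delta s^K)^{\beta+1}(\delta s^K)^{\beta+2}\,ds
\approx\delta^{2\beta+3}\,\delta^{\frac{K(2\beta+3)+1}{M-K}} .
\]
Dividing by $\delta^{2\beta+4}$, the ratio tends to infinity exactly when
\[
\frac{K(2\beta+3)+1}{M-K}<1 \iff M>K(2\beta+4)+1,
\]
which is the stated threshold. Since $\beta>-1$, this is automatically at least $2K+1$, so it is consistent with the assumption $M>2K+1$. The unboundedness criterion from Section \ref{Bergman} then finishes the proof.

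\textbf{Main obstacle.} The delicate point is Step 1: obtaining \emph{uniform} derivative bounds of size $s^{-K}$ on a full neighbourhood of the branch whose size is a power of $s$, and not merely on the branch itself. The $z_1$-derivative in particular is not covered by the quoted fact. I would first try to get it from the identity $\phi_\ell=\widetilde p_\ell/p_\ell$ together with a lower bound on $|p_\ell|$ coming from the $\mathrm{Im}\,\psi_j(0)>0$ term of the Puiseux factor, as in Theorem \ref{def:loccont}. If that gives a worse exponent, say $s^{-2K}$, then $K$ should be replaced accordingly in Steps 2 and 3, and the threshold should be rechecked.
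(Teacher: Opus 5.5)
This is essentially the paper's own proof. It has the same Lipschitz-type bounds with constant $|\zeta-\omega_1|^{-K}$, literally the same set $A_\delta$, the same Fubini computation and the same threshold $M>K(2\beta+4)+1$. The hypothesis $M>2K+1$ only makes the $\beta$-range nonempty and plays no role in the inclusion of your Step 2. The paper settles your flagged Step 1 no more carefully than you do: it simply asserts that the Clark-weight asymptotics on the branch ``essentially mean'' $|\partial_{z_i}\phi_\ell|\approx|\zeta-\omega_1|^{-K_{\phi_\ell}}$ for $i=1,2$.

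If you want Step 1 rigorous, bypass derivatives and use the ratio form $|\Psi-\alpha|\approx\prod_{\nu,m}|w_2-h_{\nu,m}(w_1)|/|w_2-b_{\nu,m}(w_1)|$ from the sufficiency proof. Here $b_{\nu,m}(w_1)=-q_\nu(w_1)-w_1^{K_\nu}\psi_\nu(\mu_\nu^m w_1^{1/M_\nu})$ are the zeros of $q$. For $w_2$ in the closed upper half-plane and $0\le\mathrm{Im}\,w_1\ll|\mathrm{Re}\,w_1|$, one has $-\mathrm{Im}\,b_{\nu,m}(w_1)\gtrsim|w_1|^{K_\nu}$ and $|h_{\nu,m}-b_{\nu,m}|\lesssim|w_1|^{K_\nu}$. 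So every ratio is $O(1)$, and the branch's own ratio is $\lesssim\bigl(|w_2-h(\mathrm{Re}\,w_1)|+\mathrm{Im}\,w_1\bigr)|w_1|^{-K}$, which is exactly your Step 1 estimate with $K'=K$. By contrast, a lower bound on $|p_\ell|$ alone would lose powers of $|\zeta-\omega_1|$ when several branches pass through $\omega$.
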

\begin{proof} First, we pick $\alpha_1,\alpha_2$ such that they satisfy Theorem 5.6 of \cite{Clark}, otherwise we cannot proceed with the proof. Such $\alpha_1,\alpha_2$ always exist, therefore our assumption is to follow the details of the mentioned paper but have no significant role here. The delicate part of the proof that distinguishes this case from the previous tangential intersection Theorem, is that we cannot use here the fact that
$$\partial_{z_i}\phi_{\ell}(\zeta,g^{a_{\ell},\phi_\ell}_{j_\ell}(\zeta))=c_{i\ell}\neq 0, \quad i=1,2.$$
This is the key assumption that previously allowed us to obtain the local upper bounds on $|\phi_{\ell}-\alpha_{\ell}|,\ell=1,2.$ Having clarified this, after using the assumption of generic values on the $z_1-$coordinate of the singularity, we know from the discussion in Section 5.1. and Theorem 5.6 of the paper \cite{Clark}, that the corresponding Clark weights satisfy $$W_{j_{\ell}}^{\alpha_\ell}(\zeta)\approx\left|\frac{\partial\phi_{\ell}}{\partial z_2}(\zeta,g_{j_\ell}^{\alpha}(\zeta))\right|^{-1}\approx |\zeta-\omega_1|^{K_{\phi_\ell}},\quad \ell=1,2,$$ close to the $z_1-$coordinate of the singularity.  This essentially means that
$$|\partial_{z_i}\phi_{\ell}(\zeta,g^{a_{\ell},\phi_\ell}_{j_\ell})(\zeta)|\approx |\zeta-\omega_1|^{-K_{\phi_\ell}}\quad \ell=1,2,\quad i=1,2.$$ With similar ideas as the previous result, we now obtain the estimates

$$|\phi_1(r\zeta,z_2)-\alpha_1|\lesssim \frac{1}{|\zeta-\omega_1|^{K_{\phi_1}}}\left((1-r)+|z_2-g^{\alpha_1,\phi_1}_{j_1}(\zeta)|\right),$$
and
\begin{align}
|\phi_2(r\zeta,z_2)-\alpha_2|&\lesssim \frac{1}{|\zeta-\omega_1|^{K_{\phi_2}}}\left((1-r)+|z_2-g^{\phi_2,\alpha_1}_{j_2}(\zeta)|\right) \notag&&\\
&\lesssim\frac{1}{|\zeta-\omega_1|^{K_{\phi_2}}}\left((1-r)+|g_{j_1}^{\phi_1,\alpha_1}(\zeta)-g_{j_2}^{\phi_2,\alpha_2}(\zeta)|+|z_2-g_{j_1}^{\phi_1,\alpha_1}(\zeta)|\right)\notag&&\\
&\approx \frac{1}{|\zeta-\omega_1|^{K_{\phi_2}}}\left((1-r)+|\zeta-\omega_1|^M+|z_2-g^{\phi_1,\alpha_1}_{j_2}(\zeta)|\right)
.\end{align}
The idea is now to find the proper $A_{\delta}-$set that will be subset of $\Phi^{-1}(S((\alpha_1,\alpha_2),C\delta)).$ Set $s:=|\zeta-\omega_1|$ for a moment.
The natural candidate for this is

$$A_{\delta}:=\Bigg\{z=(r\zeta,z_2)\in\mathbb{D}^2,\zeta\in I: s^{M-K}<C\delta,1-r<C\delta s^{K},|z_2-g_{j_1}^{\alpha_1,\phi_1}(\zeta)|<C\delta s^K \Bigg\}.$$

Of course we observe that with this definition, $A_{\delta}\subset \Phi^{-1}(S((\alpha_1,\alpha_2),C\delta))$ for some $C>0.$ So one just needs to calculate the volume $V_{\beta}(A_{\delta}).$ By Fubini's Theorem

\begin{align} V_{\beta}(A_{\delta})\approx&\int_{\{
|\zeta-\omega_1|^{M-K}<\delta\}} \left(\int_{1-\delta|\zeta-\omega_1|^K}^{1}(1-r^2)rdr\right)\left(\int_{|\zeta_2-g_{j_1}^{\phi_1,\alpha_1}(\zeta)|<\delta |\zeta-\omega_1|^{K}}dA_{\beta}(z_1)\right)|d\zeta|\notag&&\\
\approx& \int_{\{
|\zeta-\omega_1|^{M-K}<\delta\}}(\delta|\zeta-\omega_1|^{K})^{\beta+1}\cdot(\delta|\zeta-\omega_1|^K)^{\beta+2}|d\zeta|\notag&&\\
\approx&\delta^{2\beta+3}\int_{\{
|\zeta-\omega_1|^{M-K}<\delta\}}|\zeta-\omega_1|^{K(2\beta+3)}|d\zeta|\notag&&\\
\approx&\delta^{2\beta+3}\int_{0}^{\delta^{1/(M-K)}}t^{K(2\beta+3)}dt\notag&&\\
\approx&\delta^{2\beta+3+\frac{K(2\beta+3)+1}{M-K}}.
\end{align}
Comparing with $\delta^{2\beta+4},$ this tends to infinity exactly when $-1+\frac{K(2\beta+3)+1}{M-K}<0,$ or equivalently when $K(2\beta+3)+1>M-K$ or equivalently $M>K(2\beta+4)+1$ and solving for $\beta$ this gives $\beta<\frac{M-1}{2K}-2,$ which makes sense only whenever $M>2K+1.$ This finishes the proof.
\end{proof}
\begin{remark} For the mixed symbols, we replace one of the $g^{\alpha_\ell,\phi_\ell}_{j_\ell}(\zeta)$ with the analytic germ that parametrizes locally the level set of $\psi\in \mathcal{O}(\mathbb{D}^2,\mathbb{D})\cap\mathcal{C}^2(\mathbb{D}^2),$ and we moreover assume that it approaches the singularity with a corresponding $K_{\psi}-$order. The proof proceeds similarly.
\end{remark}
\subsection{Transversality implies boundedness}\label{transversal} 
Here we prove the boundedness sufficiency Theorem and obtain our main result as a Corollary.
\begin{theorem} Let $\Phi=(\phi,\psi)$ with $\phi=\widetilde{p}/p$ with a singularity at $(1,1),$ a RIF and $\alpha_1\in\mathbb{T}$ is the generic value at its singularity. Let $\psi\in\mathcal{O}(\mathbb{D}^2,\mathbb{D})\cap \mathcal{C}^2(\overline{\mathbb{D}^2})$ with $\psi(1,1)=\tau\in\mathbb{T}.$ Assume that the level sets $\mathcal{C}_{\alpha_1}(\phi)$ is locally transversal to the level set $\mathcal{C}_{\tau}(\psi),$ and for all other points $\zeta\in\mathbb{T}^2$ s.t. $\Phi(\zeta)\in\mathbb{T}^2,$ first order conditions hold.  Then, the composition operator $C_{\Phi}:A^2_{\beta}(\mathbb{D}^2)\to A^2_{\beta}(\mathbb{D}^2)$ is bounded, for all $\beta\in(-1,0].$
\end{theorem}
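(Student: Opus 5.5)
We use the Carleson-box criterion of the Lemma in Section \ref{Bergman}. It suffices to show that $V_{\beta}(\Phi^{-1}(S(\eta,\tilde\delta)))\lesssim \delta_1^{\beta+2}\delta_2^{\beta+2}$ uniformly in $\eta\in\mathbb{T}^2$ and small $\tilde\delta$. By compactness of $\overline{\mathbb{D}^2}$, we localize with a finite partition of unity subordinate to a cover of the closed bidisc. The cover consists of a small neighbourhood $U$ of the singular point $(1,1)$, neighbourhoods of the other points $\zeta\in\mathbb{T}^2$ with $\Phi(\zeta)\in\mathbb{T}^2$, and a remainder on which $\Phi$ stays a positive distance from $\mathbb{T}^2$ or from the relevant torus coordinate.

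Away from $U$, both $\phi$ and $\psi$ are $\mathcal{C}^2$ up to the boundary, since a RIF is smooth off its singular set. By hypothesis the first-order condition holds at every $\zeta$ with $\Phi(\zeta)\in\mathbb{T}^2$. The local part of the Bayart–Kosi\'nski argument behind Theorem \ref{characterization}, equivalently Proposition \ref{transverse}, then gives the Carleson estimate on each such neighbourhood. On the remainder, either $\Phi$ maps into a compact subset of $\mathbb{D}^2$, or only one coordinate reaches $\mathbb{T}$. In the first case the estimate is trivial. In the second it reduces to the one-variable Littlewood subordination bound for the coordinate that reaches $\mathbb{T}$, with the other coordinate harmless.

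The main step is the estimate near $(1,1)$. After rotations, take $\alpha_1=\tau=1$. Pass to $\mathbb{H}^2$ by the Cayley transform and write $\overline{q}-\alpha q$ in the Puiseux form of the second factorization theorem of Section \ref{tools}, for generic $\alpha$ near $\alpha_1$. Each branch $z_1=-q_j(z_2)-z_2^{2L_j}\psi^{\alpha}_{j,m}(z_2)$ is a real analytic curve through the singularity, with first-order part $q_j'(0)z_2$. All branches therefore share tangent directions determined by the $q_j'(0)$, and they differ from one another only at order $2L_j\le K_\phi$. Local transversality of $\mathcal{C}_{\alpha_1}(\phi)$ and $\mathcal{C}_\tau(\psi)$ at $(1,1)$ means that $d\psi(1,1)$ does not annihilate any of these tangent directions. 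Hence, after a smooth local change of coordinates with $w_1=-i\log\psi$ (real on $\mathbb{T}^2$ and with nonvanishing gradient), the level curves of $\phi$ become graphs over $w_1$ that cross the $\psi$-level curves at angles bounded below. Then $\{|\psi-1|<\delta_2\}$ confines $(z_1,z_2)$ to a $\delta_2$-strip around the curve $\{\psi=1\}$. On this strip, $\{|\phi-1|<\delta_1\}$ is controlled slice by slice. For fixed $\zeta_2\in\mathbb{T}$ near $1$, the slice $\phi(\cdot,\zeta_2)$ is a finite Blaschke product, and the one-variable subordination estimate bounds the preimage measure of a disc of radius $\delta_1$ by $\delta_1^{\beta+2}$, uniformly in $\zeta_2$. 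Integrating this slice bound over the $\delta_2$-strip in the transverse direction gives the product $\delta_1^{\beta+2}\delta_2^{\beta+2}$ for $\beta\in(-1,0]$.

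The main obstacle is the previous step. The slice bound must be uniform as $\zeta_2\to1$, where the zeros $a_j(\zeta_2)$ approach $\mathbb{T}$ at rate $|\zeta_2-1|^{K_\phi}$ by Theorem \ref{def:loccont}. The transverse weight must also not lose powers of $\delta_2$. I would first try the following. Split $U$ into dyadic shells $|\zeta-(1,1)|\approx 2^{-k}$. On each shell, use the derivative bound $|\partial\phi|\approx 2^{kK_\phi}$ from the Clark-weight asymptotics together with a rescaling that reduces the shell to the smooth transversal situation with constants independent of $k$. Finally, sum the shell contributions. The restriction $\beta\le 0$ should be exactly what makes this geometric sum converge: the shell volumes gain a factor $2^{-k(\beta+\cdots)}$ against a loss of order $2^{kK_\phi}$ only in one direction.
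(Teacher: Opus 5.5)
Your treatment away from $(1,1)$ essentially matches the paper's first step. The estimate at the singular point, however, is the entire content of the theorem, and it is not proved: you leave it as an ``obstacle'', and neither mechanism you sketch closes it.

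\textbf{The slice argument.} The slices on which $\phi$ is a finite Blaschke product are those with $\zeta_2\in\mathbb{T}$. These are $V_\beta$-null, so the volume integral runs over interior slices instead. More fundamentally, a per-slice bound $A_\beta(\{z_1:|\phi(z_1,z_2)-\alpha_1|<\delta_1\})\lesssim\delta_1^{\beta+2}$, even if uniform, is only a measure bound. It says nothing about where the sublevel set sits. It therefore cannot be combined with the $\psi$-strip $\{|z_2-g(z_1)|\lesssim\delta_2\}$ to produce the second factor $\delta_2^{\beta+2}$. That factor requires confining $z_1$ to $|1-z_1|\lesssim\delta_2$, which needs positional information about $\{|\phi-\alpha_1|<\delta_1\}$ near $(1,1)$.

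\textbf{The dyadic-shell plan.} This does not supply that information either. The Clark-weight asymptotics $|\partial_{z_2}\phi(\zeta,g^{\alpha}_j(\zeta))|\approx|\zeta-1|^{-K_\phi}$ hold only along level curves on $\mathbb{T}^2$. They give upper bounds for $|\phi-\alpha_1|$ near a branch, but no lower bound away from the branches. Moreover, after rescaling a shell the map is no longer a self-map of $\mathbb{D}^2$. So a Bayart--Kosi\'nski estimate with $k$-independent constants is asserted rather than available.

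\textbf{The missing idea.} It is the pointwise lower bound the paper extracts by comparing the two Puiseux factorizations. Write $h_{\nu,m}$ for the branch parametrizations. Since $\mathrm{Im}\,\psi_\nu(0)>0$ while $\psi^{\nu,m}_\alpha(0)\in\mathbb{R}$, each factor of $q$ is dominated by the corresponding factor of $\overline{q}-\alpha q$ plus $C_{\nu,m}|w_1|^{K_\nu}$. This gives
\[
|\Psi(w_1,w_2)-\alpha|\gtrsim\prod_{\nu,m}\frac{|w_2-h_{\nu,m}(w_1)|}{|w_2-h_{\nu,m}(w_1)|+C_{\nu,m}|w_1|^{K_\nu}}.
\]
Together with the separation of distinct branches, this yields the containment
\[
\{|\phi-\alpha_1|<\delta_1\}\cap U\subset\bigcup_{\nu,m}\{|z_2-h_{\nu,m}(z_1)|<\delta_1|1-z_1|^{K_\nu}\}.
\]
Transversality, $|h_{\nu,m}(z_1)-g(z_1)|\gtrsim|1-z_1|$, combined with $|z_2-g(z_1)|\lesssim\delta_2$, then forces $|1-z_1|\lesssim\delta_2$. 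Fubini bounds the volume by $\sum_{\nu,m}\delta_1^{2}\delta_2^{(K_\nu+1)(\beta+2)}$.

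So $\beta\le 0$ enters only at this last step, through $\delta_1^2\le\delta_1^{2+\beta}$ (the $z_2$-discs are measured by their area). It does not come from convergence of a sum over shells, as you guess.
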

\begin{proof}
Transversal intersection in smooth points is equivalent to the first order conditions. Hence, the volumes are already bounded from above by $\approx\delta_1^{2+\beta}\delta_2^{2+\beta}.$
We only need to check the singularity and how the volume of inverse image of a Carleson box under $\Phi,$ behaves there.

Let $K_\nu\ge 2$ be the contact orders of each factor. We will initially handle the problem on the bi-upper halfplane $\mathbb{H}^2.$ Set $q,\bar{q}$ for the corresponding polynomials after passing from the bidisc to the halfplane via $m:\mathbb{D}\to\mathbb{H}$ applied on both variables. The assumption on generic value and the local factorization Theorems imply that
$$\overline{q}(w_1,w_2)-\alpha q(w_1,w_2)=(1-\alpha)u_{\alpha}(w_1,w_2)\prod_{\nu=1}^N\prod_{m}^{M_\nu}(w_2+q_\nu(w_1)+w_1^{K_\nu}\psi^{\nu,m}_{\alpha}(w_1))\quad \mathrm{and}$$
$$q(w_1,w_2)=u(w_1,w_2)\prod_{\nu=1}^N\prod_{m}^{M_\nu}(w_2+q_\nu(w_1)+w_1^{K_\nu}\psi_\nu(\mu_\nu^mw_1^{1/M_{\nu}})),$$
with both $\psi^{\nu,m}_{\alpha}$ and $\psi^\nu$ real analytic and $\psi_{\alpha}^{\nu,m}(0)\in\mathbb{R}.$ Then, if $\Psi=\bar{q}/q,$
\begin{align}  |\Psi(w_1,w_2)-\alpha|\approx& \frac{|\prod_{\nu=1}^N\prod_{m}^{M_\nu}(w_2+q_\nu(w_1)+w_1^{K_\nu}\psi^{\nu,m}_{\alpha}(w_1))|}{|\prod_{\nu=1}^N\prod_{m}^{M_\nu}(w_2+q_\nu(w_1)+w_1^{K_\nu}\psi_\nu(\mu_\nu^mw_1^{1/M_{\nu}}))|}\notag&&\\
\approx& \frac{|\prod_{\nu=1}^N\prod_{m}^{M_\nu}(w_2+q_\nu(w_1)+w_1^{K_\nu}\psi^{\nu,m}_{\alpha}(w_1))|}{|\prod_{\nu=1}^N\prod_{m}^{M_\nu}(w_2+q_\nu(w_1)+w_1^{K_\nu}\psi_\nu(\mu_\nu^mw_1^{1/M_{\nu}})-w_1^{K_\nu}\psi^{\nu,m}_{\alpha}(w_1)+w_1^{K_\nu}\psi^{\nu,m}_{\alpha}(w_1))|}\notag&&\\
\gtrsim& \prod_{\nu=1}^N\prod_{m}^{M_\nu}\frac{|w_2+q_\nu(w_1)+w_1^{K_{\nu}}\psi^{\nu,m}_{\alpha}(w_1)|}{|w_2+q_\nu(w_1)+w_1^{K_\nu}\psi_{\alpha}^{\nu,m}(w_1)|+|w_1|^{K_\nu}|\psi^{\nu,m}_{\alpha}(w_1)-\psi_\nu(\mu_{\nu}^mw_1^{1/M_\nu})|}
\end{align}
by triangle inequality. Moreover, $\psi_{\alpha}^{\nu,m}(0)-\psi_{\nu}(0)\neq0$ because each $\psi_{\alpha}^{\nu,m}(0)\in\mathbb{R}$ and $\mathrm{Im}(\psi_\nu)(0)>0,$ hence, as $w_1\to 0,$ $|\psi^{\nu,m}_{\alpha}(w_1)-\psi_\nu(\mu_{\nu}^mw_1^{1/M_j})|$ does not vanish in a neighborhood of $0.$ This implies that there exists a constant $C_{\nu,m}>0$ such that
\begin{align}|\Psi(w_1,w_2)-\alpha|\gtrsim\prod_{\nu=1}^N\prod_{m}^{M_\nu}\frac{|w_2+q_\nu(w_1)+w_1^{K_{\nu}}\psi^{\nu,m}_{\alpha}(w_1)|}{|w_2+q_\nu(w_1)+w_1^{K_\nu}\psi_{\alpha}^{\nu,m}(w_1)|+|w_1|^{K_\nu}C_{\nu,m}}.\label{disgustingestimate}\end{align}
Set $h^{\alpha,\nu,m}_{\mathbb{H}^2}(w_1)=-q_\nu(w_1)-w_1^{K_{\nu}}\psi_{\alpha}^{\nu,m}(w_1),$ and on the bidisc $h^{\alpha_1,\nu,m}_{\mathbb{D}^2}(z_1),$ denoting the corresponding local parametrization of the level set in the unit disc variable $z_1\in\mathbb{D}$.  Now here comes the delicate part of the proof. 

Let us provide some explanation. Our aim is to find an effective superset of this sublevel set, that has nice volumes. The initial idea is that this set is the union of every factor separately becoming small. This is not leading to the desired volume estimates. But if we control the way the factors are becoming small, and modify slightly the initial idea, we can prove that the difference $|\Psi-\alpha|$ becomes small whenever one factor becomes small and the others stay larger. Gluing these sets will lead to the effective superset we are after. For this to be more understandable, we shall separate the proof into steps.

\textbf{Step 1: Definition of the set:}
Define
$$\Omega_{\nu,m,\varepsilon}:=\{|w_2-h^{\alpha,\nu,m}_{\mathbb{H}^2}(w_1)|<\varepsilon|w_1|^{K_\nu}\}.$$
The idea is to prove that if the difference on the left hand side of \ref{disgustingestimate} is small, then $w\in \Omega_{\nu,m,\delta_1}$ for some $\nu,m.$  
 \begin{remark}
The choice of the definition for the regions $\Omega_{\nu,m,\delta_1}$ is intentional and comes from the fact that
\begin{align}
&\frac{C|w_2+q_\nu(w_1)+w_1^{K_{\nu}}\psi^{\nu,m}_{\alpha}(w_1)|}{|w_2+q_\nu(w_1)+w_1^{K_\nu}\psi_{\alpha}^{\nu,m}(w_1)|+|w_1|^{K_\nu}C_{\nu,m}}<\varepsilon\Longrightarrow\notag&&\\ & C|w_2-h^{\alpha,\nu,m}_{\mathbb{H}^2}(w_1)|<\varepsilon(|w_2-h^{\alpha,\nu,m}_{\mathbb{H}^2}(w_1)|+|w_1|^{K_{\nu}}C_{\nu,m})\notag
\end{align}
which for $\varepsilon$ small enough implies $$|w_2-h^{\alpha,\nu,m}_{\mathbb{H}^2}(w_1)|<\frac{\varepsilon}{C-\varepsilon}|w_1|^{K_\nu}\lesssim\varepsilon|w_1|^{K_{\nu}}.$$
\end{remark}
\textbf{Step 2: Separation of the $\Omega_{\nu,m,\delta_1}.$}
For our argument to work, we need to show that these regions are disjoint from one another. Distinct branches have difference controlled by $\approx|w_1|^{K},$ where $K$ is their order of contact (this comes from the definition of order of contact, see Section \ref{tools} or, again, paper \cite{Clark}).  Hence, for
 $\delta_1<\varepsilon$ sufficiently small, we claim that the regions $\Omega_{\nu,m,\delta_1}$
   are pairwise disjoint.
   \begin{proof} \textbf{(Proof of the claim):}
   To see this, by the order of contact of the branches, we know that $$|h^{\alpha,\nu,m}_{\mathbb{H}^2}(w_1)-h^{\alpha,\ell,m}_{\mathbb{H}^2}(w_1)|\approx |w_1|^{\min\{K_\nu,K_\ell\}}.$$
 Note that for the bound from below, there exists $C>0$ such that
 $$|h^{\alpha,\nu,m}_{\mathbb{H}^2}(w_1)-h^{\alpha,\ell,m}_{\mathbb{H}^2}(w_1)|\ge C |w_1|^{\min\{K_\nu,K_\ell\}}$$
 Letting now $\delta_1$ become smaller, the regions $\Omega_{\nu,m,\delta_1}$ shrink but they stay away from each other, remaining disjoint under control.
 To see this controlled disjointedness, pick two distinct branches $h^{\alpha,\nu_0,m_0}_{\mathbb{H}^2}(w_1)$ and $h^{\alpha,\nu_1,m_1}_{\mathbb{H}^2}(w_1)$ with contact orders $K_{\nu_0},K_{\nu_1}\ge2.$ Assume on the contrary that they overlap. Then, by triangle inequality
 $$|h^{\alpha,\nu_0,m_0}_{\mathbb{H}^2}(w_1)-h^{\alpha,\nu_1,m_1}_{\mathbb{H}^2}(w_1)|\leq|w_2-h^{\alpha,\nu_0,m_0}_{\mathbb{H}^2}(w_1)|+|w_2-h^{\alpha,\nu_1,m_1}_{\mathbb{H}^2}(w_1)|<\varepsilon(|w_1|^{K_{\nu_0}}+|w_1|^{K_{\nu_1}}).$$
 In a small neighborhood around $(0,0),$ specifically for $\varepsilon<C/2,$ one has $|w_1|<1,$ therefore $|w_1|^{K_{\nu_0}}+|w_1|^{K_{\nu_1}}\leq 2|w_1|^{\min\{K_{\nu_0},K_{\nu_1}\}}.$
 This would imply that 
 $$|h^{\alpha,\nu_0,m_0}_{\mathbb{H}^2}(w_1)-h^{\alpha,\nu_1,m_1}_{\mathbb{H}^2}(w_1)|<2\varepsilon|w_1|^{\min\{K_{\nu_0},K_{\nu_1}\}},$$ which is a contradiction, after our choice of $\varepsilon.$ Hence, after shrinking this regions correctly, we keep them disjoint from each other.
 \end{proof}
\textbf{Step 3: Outside of this union the difference $|\Psi-\alpha|$ is bounded from below:}
 After settling our claim, using this controlled disjointedness we want to reach to the following implication: "The whole product of ratios becomes small" then "$w$ belongs to $\bigcup_{\nu,m}\Omega_{\nu,m,\delta_1},$ i.e. outside this union, the other factors will be bounded from below. To see this, assume that $w\not\in\bigcup_{\nu,m}\Omega_{\nu,m,\varepsilon_0},$ for $\varepsilon_0>0.$ Then, for all these indices $\nu,m,$ we would have $$|w_2-h^{\alpha,\nu,m}_{\mathbb{H}^2}(w_1)|\ge \varepsilon_0|w_1|^{K_{\nu}},$$ and by the fact that the function $$f(x)=\frac{x}{x+Ct^K},\quad x\in [0,+\infty)$$ is strictly increasing, we have that 
 $$|\Psi(w_1,w_2)-\alpha|\ge \prod_{\nu=1}^N\prod_{m}^{M_\nu}\frac{|w_2-h^{\alpha,\nu,m}_{\mathbb{H}^2}(w_1)|}{|w_2-h^{\alpha,\nu,m}_{\mathbb{H}^2}(w_1)|+C_{\nu,m}|w_1|^{K_{\nu}}}>\prod_{\nu}^N\prod_{m}^{M_\nu}\frac{\varepsilon_0}{\varepsilon_0+C_{\nu,m}}.$$
 Taking $\delta_1<\prod_{\nu}^N\prod_{m}^{M_\nu}\frac{\varepsilon_0}{\varepsilon_0+C_{\nu,m}}$ leads to a contradiction, therefore reaching to the desired inclusion, which on the bidisc can be written as $$\bigg\{z\in\mathbb{D}^2\cap U:|\phi(z_1,z_2)-\alpha_1|<\delta_1\bigg\} \subset \bigcup_{\nu,m}\bigg\{z\in\mathbb{D}^2\cap U:|z_2-h^{\alpha_1,\nu,m}_{\mathbb{D}^2}(z_1)|<\delta_1|1-z_1|^{K_{\nu}}\bigg\}.$$ 

\textbf{Step 4: The superset for the smooth function:}
Now we pass to the smooth coordinate. Since $\psi \in \mathcal{C}^2(\overline{\mathbb{D}^2})$ and $\psi(1,1)=\tau\in\mathbb{T}$, by transversality assumption, at least one of the partial derivatives $\frac{\partial \psi}{\partial z_1}(1,1)$, $\frac{\partial \psi}{\partial z_2}(1,1)$ is non‑zero. Therefore, we may assume $\frac{\partial \psi}{\partial z_2}(1,1) \neq 0$, otherwise we exchange the variables $z_1$ and $z_2$ and repeat the same argument. Thus, locally near $(1,1)$, by the \textit{implicit function Theorem}, close to $(1,1)$ the level set of $\psi$ can be parametrized by an analytic function $z_2=g(z_1),$ and local transversality means that around a neighborhood of $(1,1),$ $$|h^{\alpha_1,\nu,m}_{\mathbb{D}^2}(z_1)-g(z_1)|\gtrsim |1-z_1|\quad \mathrm{near} \quad 1,\quad \mathrm{branchwise}.$$ Therefore, $$|\psi(z_1,z_2)-\tau|<\delta_2\Longrightarrow|z_2-g(z_1)|\lesssim\delta_2.$$ But
$$|1-z_1|\lesssim|h^{\alpha_1,\nu,m}_{\mathbb{D}^2}(z_1)-g(z_1)|\leq |z_2-h^{\alpha_1,\nu,m}_{\mathbb{D}^2}(z_1)|+|z_2-g(z_1)|\lesssim\delta_1|1-z_1|^{K_\nu}+\delta_2\approx\delta_2,$$ sufficiently close to 1.
To see the last step, shrink once again, and take $|z_1-1|<r,$ for some small $r<\varepsilon.$  Then 
$$|1-z_1|\leq \delta_1r^{K_{\nu}-1}|1-z_1|+\delta_2\Longrightarrow(1-\delta_1r^{K_{\nu}-1})|1-z_1|<\delta_2\Longrightarrow|1-z_1|<\frac{\delta_2}{1-\delta_1r^{K_{\nu}-1}}\approx\delta_2$$
for $\delta_1$ sufficiently small.

\textbf{Step 5: Volume estimates:}
At this point, we can consider the set $$A_{\nu,m}(\delta_1,\delta_2):=\bigcup_{\nu,m}\bigg\{z\in\mathbb{D}^2\cap U:|h_{\mathbb{D}^2}^{\alpha_1,\nu,m}(z_1)-z_2|<\delta_1|1-z_1|^{K_\nu}, |z_1-1|<\delta_2\bigg\}.$$
After our last implication, this set satisfies $$\Phi^{-1}(S(\alpha_1,\tau),\tilde{\delta})\cap U\subset  A_{\nu,m}(\delta_1,\delta_2),$$ therefore
\begin{align} V_{\beta}(\Phi^{-1}(S(\alpha_1,\tau),\tilde{\delta})\cap U))&\leq V_{\beta}\left(A_{\nu,m}(\delta_1,\delta_2)\right)\notag&&\\
&=\int_{ A_{\nu,m}(\delta_1,\delta_2)}(1-|z_1|^2)^\beta(1-|z_2|^2)^{\beta}dV(z_1,z_2)\notag&&\\
&\approx \sum_{\nu,m}\int_{\{z\in\mathbb{D}^2:|1-z_1|<C\delta_2\}}\left(\int_{\{z\in\mathbb{D}^2:|z_2-h^{\alpha_1,\nu,m}_{\mathbb{D}^2}(z_1)|<C\delta_1|1-z_1|^{K\nu}\}}dA_{\beta}(z_2)\right)dA_{\beta}(z_1) \label{line}&&\\
&\approx \sum_{\nu,m}\delta_1^{2}\int_{|1-z_1|<C\delta_2}|1-z_1|^{2K_\nu}(1-|z_1|^2)^\beta
dA(z_1)\notag&&\\
&\approx \sum_{\nu,m}\delta_1^{2}\delta_2^{K_{\nu}(\beta+2)+\beta+2}\notag&&\\
&\lesssim \delta_1^{2}\delta_2^{\min\{K_\nu\}(\beta+2)+\beta+2}\notag&&\\
&\lesssim \delta_1^{2+\beta}\delta_2^{\beta+2}, \quad \mathrm{for}\quad \beta\in(-1,0]\notag.
\end{align}
The summation is finite in all cases, as we have a finite number of factors.
This suffices to prove boundedness $\beta\in(-1,0].$ The proof works only for $-1<\beta\leq 0$ because our discs in line \ref{line} of our chain, have centers that lay inside $\mathbb{D}$ and not on the boundary. Their volume contribution is $\approx \delta^2$ and not $\delta^{2+\beta}.$ So our proof only works for negative $\beta$ and $\beta=0,$ which corresponds to the classical Bergman space.
\end{proof}
Based on the geometric non-boundedness results and the above sufficiency Theorem, we obtain

\begin{theorem} Let $\Phi=(\phi,\psi)$ as in Theorem \ref{transversal}. Assume that if the level sets $\mathcal{C}_{\alpha_1}(\phi)$ and $\mathcal{C}_{\tau}(\psi)$ intersect tangentially at $\omega\in\mathbb{T}^2,$ then the order of tangential intersection satisfies $M>4K+1,$ where $K=\max\{K_{\phi},K_{\psi}\}.$  Then, the composition operator $C_{\Phi}:A^2_{\beta}(\mathbb{D}^2)\to A^2_{\beta}(\mathbb{D}^2)$ is bounded for all $\beta\in(-1,0],$ if and only if $\mathcal{C}_{\lambda_1}(\phi)$ and $\mathcal{C}_{\lambda_2}(\psi)$ intersect transversally for all $\lambda_1,\lambda_2\in \mathbb{T}.$
\end{theorem}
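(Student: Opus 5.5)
The proof splits into the two directions of the equivalence, and each direction is assembled from results already established above.

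\emph{Sufficiency.} Assume $\mathcal{C}_{\lambda_1}(\phi)$ and $\mathcal{C}_{\lambda_2}(\psi)$ intersect transversally for all $\lambda_1,\lambda_2\in\mathbb{T}$. Then at every point $\zeta\in\mathbb{T}^2\setminus\{(1,1)\}$ with $\Phi(\zeta)\in\mathbb{T}^2$, both $\phi$ and $\psi$ are smooth near $\zeta$. By Proposition \ref{transverse}, transversality there is equivalent to invertibility of $d_\zeta\Phi$, so the first-order conditions hold at all such points. At the singularity $(1,1)$, take $\lambda_1=\alpha_1$ (the generic value) and $\lambda_2=\tau$. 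Local transversality of $\mathcal{C}_{\alpha_1}(\phi)$ and $\mathcal{C}_{\tau}(\psi)$ is then exactly the hypothesis of the sufficiency theorem of Subsection \ref{transversal}. That theorem gives boundedness for every $\beta\in(-1,0]$. One technical point needs checking: the sufficiency theorem handles a neighbourhood $U$ of $(1,1)$ and the first-order conditions handle the rest. To glue these into a single Carleson estimate, I will use a compactness/partition argument. The set $\{\zeta:\Phi(\zeta)\in\mathbb{T}^2\}$ is closed, so it is covered by finitely many neighbourhoods, each carrying the uniform box estimate $V_\beta(\Phi^{-1}(S))\lesssim\delta_1^{\beta+2}\delta_2^{\beta+2}$. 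On the complement of these neighbourhoods, $\Phi$ stays away from $\mathbb{T}^2$ near the boundary, so small boxes have empty preimage there.

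\emph{Necessity.} I will argue by contraposition: suppose transversality fails for some $\lambda_1,\lambda_2$ at some $\omega\in\mathcal{C}_{\lambda_1}(\phi)\cap\mathcal{C}_{\lambda_2}(\psi)$. Failure of transversality means the two curves share a tangent direction at $\omega$, so I classify the possible local geometries and match each to an obstruction already proved, in its mixed version.
\begin{itemize}
\item[(a)] $\mathcal{C}_{\lambda_1}(\phi)$ contains a vertical or horizontal line through $\omega$ that is also tangent to, or contained in, the level set of $\psi$. If the line is contained in the level set of $\psi$, the mixed common-exceptional-line theorem applies; if it is only tangent, this falls under case (c) or (d).
\item[(b)] The two level curves share an analytic arc. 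The common-analytic-curve theorem (mixed remark, with $g$ from the implicit function theorem) gives unboundedness for all $\beta$.
\item[(c)] Isolated tangency at a smooth point $\omega\neq(1,1)$, of some order $M>1$. The smooth tangential intersection theorem (mixed remark) gives unboundedness for all $\beta$.
\item[(d)] Tangency at the singularity $\omega=(1,1)$. Here the hypothesis $M>4K+1$ is used. The singular tangential intersection theorem, in its mixed form, yields unboundedness whenever $M>K(2\beta+4)+1$. For $\beta\in(-1,0]$ the right side is at most $4K+1$, so $M>4K+1$ gives unboundedness for every $\beta$ in the range.
\end{itemize}
Since the level curves are real-analytic away from the singularity, two curves through $\omega$ with a common tangent either coincide on an arc or differ by $\approx|\zeta-\omega_1|^M$ for some finite $M>1$. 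This follows from the parametrization in Lemma \ref{parametrization} together with the implicit-function germ for $\psi$, and it guarantees that the cases above are exhaustive.

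The main obstacle is case (d) together with the genericity bookkeeping. The singular theorem requires the values at $\omega$ to be generic in the sense of Theorem 5.6 of \cite{Clark}, and requires the branch of $\psi$ to approach with a contact order $K_\psi$. The plan is as follows. Since $\phi$ has a singularity at $(1,1)$, its level curves through $(1,1)$ are those for $\alpha_1$, and the statement fixes $\alpha_1$ generic. If tangency instead occurs for an exceptional $\lambda_1$, the offending curve is a line, and I fall back on case (a) or an analogue of the smooth argument. I also need to verify that the estimate $|\partial\phi/\partial z_2|\approx|\zeta-\omega_1|^{-K_\phi}$ holds along the chosen branch. If it does not, I would bound the mixed estimate of $|\psi-\tau|$ directly by smoothness of $\psi$, which only improves the volume lower bound.
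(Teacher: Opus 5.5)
Your proposal follows the paper's proof: sufficiency comes from the mixed-symbol sufficiency theorem at $(1,1)$ together with the first-order conditions at the smooth points. Necessity reduces any failure of transversality to one of the four obstructions (common line, common arc, smooth tangency, singular tangency), and the bound $K(2\beta+4)+1\le 4K+1$ for $\beta\le 0$ disposes of the singular case.

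One correction to your exhaustiveness remark: $\psi$ is only $\mathcal{C}^2$ up to the boundary, so its level curve need not be real-analytic, and the ``arc or power-type contact'' dichotomy is not justified. This does not hurt the argument, because any tangency of $\mathcal{C}^2$ curves already gives $|g_1(\zeta)-g_2(\zeta)|\lesssim|\zeta-\zeta_0|^2$, and the tangential-intersection arguments use only this upper bound.
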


\begin{proof} Sufficiency has already been proved. For the converse, assume that any tangential intersection on the singularity occurs with order $M>4K+1$ and that the composition operator is bounded. If the intersection was not transversal in smooth or singular points, then we have four possible scenarios. Either a common straight line, a common arc of a curve, a tangential intersection in smooth points, or a tangential intersection in the singularity with order $M>4K+1.$ By Theorems \ref{exceptionalcurve}, \ref{exceptionalline}, \ref{smoothtangentlines} and \ref{singlulartangentlines}, we obtain a contradiction. This proves the Theorem.
\end{proof}

\section{More results and examples}\label{examples} In this section we will present some more secondary results and concrete examples that illustrate the results obtained. In the next Theorem, we essentially present a special case of our main result which holds for all $\beta\ge-1.$ In the first example, we provide a self map of the bidisc induced by the $\kappa-$function and by the function $\phi_{AMY}$ and prove non-boundedness via the techniques of the common line Theorem. In the second example, we consider a case that the common curve Theorem covers. In the third and last example, we consider two self maps which are half smooth-half singular and we showcase the validity of Theorem \ref{characterization1} for exceptional values too.

\subsection{Half smooth-half singular characterization.}
We begin with the non-smooth self map characterization induced by iterations of the function $\kappa(z_1,z_2).$ Note that this Theorem holds for all $\beta\ge-1,$ and the conditions on the partial derivative of the smooth function $\psi$ are equivalent to transversality between the level sets of the RIFs $\{\phi_n\}$ and $\psi.$

\begin{theorem} There exists a family of RIFs $\{\phi_n\}$ of bidegree $(1,n+1)\in\mathbb{N}^2,$ each singular on  $(1,1)\in\mathbb{T}^2$ such that for every $\psi\in\mathcal{O}(\mathbb{D}^2,\mathbb{D})\cap\mathcal{C}^2(\overline{\mathbb{D}^2}),$ with $\psi(1,1)=\tau\in\mathbb{T},$ the composition operator $C_{\Phi_n}:A^2_{\beta}(\mathbb{D}^2)\to A^2_{\beta}(\mathbb{D}^2)$ with $\Phi_n=(\phi_n,\psi)$ is bounded, if and only if both conditions hold:

\begin{itemize}
\item First order condition holds $\forall\zeta\in\mathbb{T}^2\setminus\{(1,1)\}$ such that $\Phi(\zeta)\in\mathbb{T}^2\setminus\{(1,\tau)\}.$
\item  
$\partial_{z_1}\psi(1,1)\neq 0 \quad\mathrm{and}\quad \partial_{z_2}\psi(1,1)\neq0.$
\end{itemize}
\end{theorem}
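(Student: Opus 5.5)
The family I would take is built from iterates of $\kappa$ in the second variable. Concretely, I would set $\phi_n(z_1,z_2)=\kappa(z_1,\theta_n(z_2))$ with $\theta_n(z_2)=z_2^{n}$, or, even more simply, $\phi_n=\kappa(z_1,z_2)z_2^{n}$. Either choice is a RIF of bidegree $(1,n+1)$ whose only singularity on $\mathbb{T}^2$ is $(1,1)$. The key structural fact, which I would verify first, is that $\phi_n$ has degree one in $z_1$. Each slice $z_1\mapsto\phi_n(z_1,\zeta_2)$ is therefore a single Möbius factor, and the level sets $\mathcal{C}_\alpha(\phi_n)$ are single graphs $\zeta_2=g^\alpha(\zeta_1)$, possibly together with one exceptional line through the singularity. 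Explicitly, solving $\kappa(z_1,w)=\alpha$ gives
\[
w=\frac{z_1(1+\alpha)-2\alpha}{2z_1-1-\alpha},
\]
which is a Möbius map in $z_1$. Its exceptional value is $\alpha=1$, for which the level set contains both the line $\{\zeta_1=1\}$ and the line $\{\zeta_2=1\}$ (through $\kappa$). All other level curves through $(1,1)$ have contact order $K=2$. This degree-one, explicit structure is what should allow the argument to work for every $\beta\ge-1$, rather than only for $\beta\in(-1,0]$ as in Theorem \ref{characterization1}.

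\textbf{Sufficiency.} Away from $(1,1)$, $\phi_n$ is smooth, so the first order condition together with the Bayart–Kosiński theorem (Theorem \ref{characterization}) controls every Carleson box not centered near $(1,\tau)$. Near $(1,1)$ I would compute $\Phi_n^{-1}(S((1,\tau),\tilde\delta))$ directly. Since $z_1\mapsto\phi_n(z_1,z_2)$ is Möbius, the condition $|\phi_n-1|<\delta_1$ can be inverted exactly: it says that $z_1$ lies in a disc $D(z_2,\delta_1)$ whose center and radius are explicit rational functions of $z_2$. The condition $|\psi-\tau|<\delta_2$ is handled using $\partial_{z_1}\psi(1,1)\neq0$ and $\partial_{z_2}\psi(1,1)\neq0$: by the Julia–Carathéodory lemma both derivatives are nonzero with controlled argument. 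This forces
\[
|\psi(z)-\tau|\gtrsim |1-z_1|+|1-z_2|
\]
near $(1,1)$ inside $\mathbb{D}^2$, since the two terms cannot cancel, both being "positive" multiples. Consequently $|1-z_j|\lesssim\delta_2$ for both $j=1,2$. Integrating the weighted area of $D(z_2,\delta_1)$ over $|1-z_2|\lesssim\delta_2$ should give a bound $\lesssim\delta_1^{2+\beta}\delta_2^{2+\beta}$ for all $\beta\ge-1$, the Hardy case being handled by the same Carleson-box criterion.

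\textbf{Necessity.} If the first order condition fails at some point other than $(1,1)$, then unboundedness follows by localizing the smooth-case theorem there. If instead $\partial_{z_2}\psi(1,1)=0$, the Julia–Carathéodory argument used in the mixed exceptional-line theorem gives $\psi(1,z_2)\equiv\tau$. Together with the exceptional line $\{\zeta_1=1\}\subset\mathcal{C}_1(\phi_n)$, this is a common exceptional line, and the mixed version of Theorem \ref{exceptionalline} yields unboundedness for all $\beta\ge-1$. If $\partial_{z_1}\psi(1,1)=0$, the symmetric argument uses the line $\{\zeta_2=1\}$. The point that must be checked is that this horizontal line really lies in $\mathcal{C}_1(\phi_n)$. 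This holds for $\kappa(z_1,z_2)z_2^n$, since $\kappa(z_1,1)=1$.

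The main obstacle is the sufficiency volume estimate near the singularity for $\beta>0$. There the discs $D(z_2,\delta_1)$ are centered inside $\mathbb{D}$ rather than on the boundary, which is exactly where the general proof of Theorem \ref{characterization1} broke down. I would try to use the explicit Möbius form to show that the center of $D(z_2,\delta_1)$ lies within distance about its radius of $\mathbb{T}$. That would give the weighted area $\approx r^{2+\beta}$ with $r\lesssim\delta_1|1-z_2|$, which should close the estimate.
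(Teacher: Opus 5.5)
\textbf{The family you chose breaks the necessity direction.} For $\phi_n=\kappa(z_1,z_2)z_2^n$ you do have $\phi_n(z_1,1)=\kappa(z_1,1)=-1$. Note the value is $-1$, not $1$: $\kappa(z_1,1)=\kappa(1,z_2)=-1$. However, $\phi_n(1,z_2)=-z_2^n$ is not constant. So for $n\ge 1$ the vertical line $\{\zeta_1=1\}$ lies in no level set of $\phi_n$, and in the case $\partial_{z_2}\psi(1,1)=0$ your argument has no common exceptional line to use.

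This cannot be patched, because the equivalence itself fails for this family. Take $n=1$ and write $u=1-z_1$, $v=1-z_2$. Then
$$\phi_1+1=\frac{v\,(3u+v-2uv)}{u+v}.$$
Hence near $(1,1)$ the set $\{|\phi_1+1|<\delta_1\}$ lies in $\{|1-z_2|\lesssim\delta_1\}\cup\{|z_2-m(z_1)|\lesssim\delta_1\}$, where $m(z_1)=\frac{2-z_1}{2z_1-1}$. The graph of $m$ is the second branch of $\mathcal{C}_{-1}(\phi_1)$, of slope $-3$ at $(1,1)$. Since $|m(z_1)|^2-1=\frac{3(1-|z_1|^2)}{|2z_1-1|^2}$, the second piece forces $1-|z_1|\lesssim\delta_1$.

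Now take $\psi(z)=z_1$, so that $\partial_{z_2}\psi\equiv 0$. Both pieces, intersected with $\{|1-z_1|<\delta_2\}$, have $V_\beta\lesssim\delta_1^{2+\beta}\delta_2^{2+\beta}$. The first order condition also holds on $\mathbb{T}^2\setminus\{(1,1)\}$: there $\det d\Phi=-\partial_{z_2}\phi_1\neq 0$, because $z_2\mapsto\phi_1(\zeta_1,z_2)$ is a Blaschke product for $\zeta_1\neq1$ and $\phi_1(1,z_2)=-z_2$. A direct check then gives boundedness of $C_{(\phi_1,z_1)}$ even though your second condition fails.

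Your other candidate, $\kappa(z_1,z_2^n)$, keeps both lines but has bidegree $(1,n)$ and singularities at every $(1,\omega)$ with $\omega^n=1$. The paper instead uses iterates ($\kappa(z_1,-\kappa(z_1,z_2))$, and so on) precisely so that both $\{\zeta_1=1\}$ and $\{\zeta_2=1\}$ stay in $\mathcal{C}_{-1}(\phi_n)$. Necessity is then exactly your Julia--Carath\'eodory plus exceptional-line argument, run in both directions.

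\textbf{The sufficiency argument rests on a false inequality.} Julia--Carath\'eodory gives $a_j:=\bar\tau\,\partial_{z_j}\psi(1,1)>0$, so $\bar\tau(\psi(z)-\tau)=-(a_1(1-z_1)+a_2(1-z_2))+O(|1-z_1|^2+|1-z_2|^2)$. But $1-z_1$ and $1-z_2$ are complex numbers in the right half-plane, and only their real parts are forced to add. Their imaginary parts cancel along the level curve $\mathcal{C}_\tau(\psi)$, which passes through $(1,1)$ with slope $-a_1/a_2$. For instance, $\psi=z_1z_2$ at $(re^{is},re^{-is})$ gives $|\psi-1|=1-r^2$, while $|1-z_1|\approx|s|$. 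So $|\psi-\tau|\gtrsim|1-z_1|+|1-z_2|$ fails, and $|\psi-\tau|<\delta_2$ does not give $|1-z_j|\lesssim\delta_2$. The set $\{|\psi-\tau|<\delta_2\}$ is a thin tube around the whole level curve, not a $\delta_2$-neighbourhood of $(1,1)$. The real issue is how the sublevel set of $\phi_n$ meets this tube, and that is where transversality must be used.

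The paper's device is the inclusion
$$\{|\phi_n+1|<\delta\}\subset\{|1-z_1|<\delta\}\cup\{|1-z_2|<\delta\}.$$
Indeed, if $|1-z_j|\ge\delta$ for both $j$, then $2|1-z_1||1-z_2|\ge\delta(|1-z_1|+|1-z_2|)\ge\delta|2-z_1-z_2|$, and this propagates through the iteration. The inclusion places $\Phi_n^{-1}(S)$ inside $\Psi_1^{-1}(S)\cup\Psi_2^{-1}(S)$ for the smooth maps $\Psi_1=(z_1,\psi)$ and $\Psi_2=(z_2,\psi)$, whose Jacobian determinants are $\partial_{z_2}\psi$ and $-\partial_{z_1}\psi$. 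Bayart--Kosi\'nski then gives the bound for every $\beta$, so the disc-centre obstruction you anticipate for $\beta>0$ never appears. Your family fails this inclusion, since its sublevel set also contains the tube around the branch $z_2=m(z_1)$.
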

\begin{proof}
We will construct these symbols with iterations of the $\kappa$ function. Precisely,
$$\kappa(z_1,z_2)=\frac{2z_1z_2-z_1-z_2}{2-z_1-z_2}\quad (z_1,z_2)\in\mathbb{D}^2.$$
Set $\phi_0=\kappa^{(0)}(z_1,z_2).$ Then,
$$\phi_1=\kappa(z_1,-\kappa^0(z_1,z_2)),\quad \phi_2=\kappa^{(2)}(z_1,-\kappa^{(1)}(z_1,z_2)),\dots, \phi_n=\kappa^{(n)}(z_1,-\kappa^{(n-1)}(z_1,z_2)).$$

Our first condition implies that the volume estimates for the reverse image of all Carleson boxes except the one centered at $(1,1)\in\mathbb{T}^2$ will satisfy the desired upper bound $\delta_1^{2+\beta}\delta_2^{2+\beta}.$ In the point $(1,1)$, the first co-ordinate function is not smooth, hence we need to estimate the volume of the reverse image of the Carleson box centered there.

We shall prove the Theorem for the function $\kappa(z_1,z_2).$ The general result on the $n-$th iteration $\kappa^{(n)}$ will follow inductively. The estimates follow up after the inclusion relation we will prove in the next key Lemma.
\begin{lemma} \label{estimate} 
Let $\kappa(z_1,z_2)$ as before. Then, for all $\beta\ge-1,$
$$V_{\beta}\left(\{z\in\mathbb{D}^2:|\kappa^{(n)}(z_1,z_2)+1|<\delta\}\right)\approx \delta^{2+\beta}, \quad \forall{\delta\in(0,1)}.$$
\end{lemma}
\begin{proof}
We show the estimates for $\kappa.$ The lower estimate can be found in \cite{Me2}, but we will provide the proof here for completeness and for the convenience of the reader. First we prove the estimate from below. We need to estimate the volume of the set $$S_{\delta,\kappa}:=\Bigg\{z\in\mathbb{D}^2:\left|\frac{2(1-z_1)(1-z_2)}{2-z_1-z_2}\right|<C\delta\Bigg\}.$$
Consider the set $$A_{\delta}=\Bigg\{z\in\mathbb{D}^2:|z_1-1|<\delta,|z_2|<1/2\Bigg\}.$$ We observe that the denominator does not vanish on $A_{\delta},$ and, moreover, that if $z\in A_{\delta}$ then also $z\in S_{\delta,\kappa}.$
This implies that $A_{\delta}\subset S_{\delta,\kappa}$ and calculating the volumes we obtain
$$V_{\beta}\left(\Bigg\{z\in\mathbb{D}^2:\left|\frac{2(1-z_1)(1-z_2)}{2-z_1-z_2}\right|<C\delta\Bigg\}\right)>V_{\beta}(A_{\delta})\approx \delta^{2+\beta}.$$
We proceed similarly to the first iteration, after simply observing that
the set
$$S_{\delta,\kappa^{(1)}}:=\Bigg\{z\in\mathbb{D}^2:\left|\frac{2(1-z_1)(1-\kappa(z_1,z_2))}{2-z_1-\kappa(z_1,z_2)}\right|<C\delta\Bigg\}$$
contains again $A_{\delta},$ yielding the same estimate from below. For the inequality from above, we need to find an effective superset of the sublevel sets.
We claim that 
$$S_{\delta,\kappa}:=\Bigg\{z\in\mathbb{D}^2:\left|\frac{2(1-z_1)(1-z_2)}{2-z_1-z_2}\right|<\delta\Bigg\}\subset\{z\in\mathbb{D}^2:|z_1-1|<\delta\}\bigcup\{z\in \mathbb{D}^2:|z_2-1|<\delta\}$$
To see this, pick points in $S_{\delta,\kappa}.$ We shall assume the contrapositive to reach to the desired conclusion. Assume that it is true that $|z_1-1|\ge \delta$ and $|z_2-1|\ge \delta.$ The defining inequality in $S_{\delta,\kappa}$ can be seen equivalently as
$$|2(1-z_1)(1-z_2)|\leq \delta|2-z_1-z_2|\leq \delta(|1-z_1|+|1-z_2|).$$
On the other hand, 
$$|z_1-1||z_2-1|\ge \delta|z_2-1|,$$
and
$$|z_1-1||z_2-1|\ge\delta|z_1-1|.$$
Summing up we receive $$|2(1-z_1)(1-z_2)|\ge \delta(|z_1-1|+|z_2-1|),$$
forcing the defining inequality of $S_{\delta,\kappa}$ impossible, hence
$$S_{\delta,\kappa}\subset\{z\in\mathbb{D}^2:|z_1-1|<\delta\}\bigcup\{z\in \mathbb{D}^2:|z_2-1|<\delta\}.$$ At this point we need to be precise, so the reader is not confused with the subset relation that we stated. Consider the sets $D_{\delta}=\{z\in\mathbb{D}:|z-1|<\delta\}$ and $E_{\delta}=\{z\in\mathbb{D}:|z-1|\ge\delta\}.$
In fact, the set that actually is superset of the sublevel set is explicitly $$S_{\delta,\kappa}\subset (D_{\delta}\times D_{\delta})\bigcup(D_{\delta}\times E_{\delta})\bigcup(E_{\delta}\times D_{\delta}).$$
But this after just calculating the union, coincides with $(D_{\delta}\times\mathbb{D})\cup (\mathbb{D}\times D_{\delta}).$
Now we shall calculate the volume from above. We see that 
\begin{align}
V_{\beta}(\{z\in\mathbb{D}^2:|\kappa(z_1,z_2)+1|<\delta\})&\lesssim V_{\beta}\left(\{z\in \mathbb{D}^2: |z_1-1|<\delta\}\bigcup\{z\in \mathbb{D}^2:|z_2-1|<\delta\}\right)&&\\ &\approx \delta^{2+\beta},
\end{align}
with the implied constant depending only on $\beta.$ For the first iteration $\kappa^{(1)},$ simply observe that 
\begin{multline}\{z\in\mathbb{D}^2:|z_1-1|<\delta\}\bigcup\{z\in \mathbb{D}^2:|\kappa(z_1,z_2)+1|<\delta\}\\\subset\{z\in \mathbb{D}^2: |z_1-1|<\delta\}\bigcup\{z\in \mathbb{D}^2:|z_2-1|<\delta\} \notag 
\end{multline}
Repeating the iterations leads to the same superset for all $n\in\mathbb{N},$ proving the estimate from above. This proves our crucial lemma.
\end{proof}
At this point we can proceed to the proof of our characterization.
We shall first prove the sufficiency part. We need to show
$$V_{\beta}(\Phi_n^{-1}(S(\zeta,\tilde{\delta})))\leq CV_{\beta}(S(\zeta,\tilde{\delta}))\approx\delta_1^{2+\beta}\delta_2^{2+\beta},\quad \mathrm{for}\quad \mathrm{all}\quad \zeta\in\mathbb{T}^2,\quad \tilde{\delta}=(\delta_1,\delta_2)\in(0,1)^2.$$
Actually, it suffices to prove this estimate for Carleson box $S((-1,1),\tilde{\delta}),$ because in every other point $\Phi$ satisfies the first order conditions, hence the volumes have the desired upper bound. In this specific Carleson box we have
$$\Phi^{-1}_n(S((-1,1),\tilde{\delta})))=\Bigg\{z\in\mathbb{D}^2:\left|\frac{2(1-z_1)(1-z_2)}{2-z_1-z_2}\right|<\delta_1,|\psi(z_1,z_2)-1|<\delta_2 \Bigg\}$$
This in turn means that 
\begin{multline}
\Phi_n^{-1}(S((-1,1),\tilde{\delta}))\subset\\\left(\{z\in\mathbb{D}^2:|z_1-1|<\delta_1\}\bigcup\{z\in \mathbb{D}^2:|z_2-1|<\delta_1\}\right)\cap\{z\in \mathbb{D}^2:|\psi(z_1,z_2)-1|<\delta_2\}. \notag
\end{multline}
We distribute the union, and we set
$$S_1=\{z\in\mathbb{D}^2:|z_1-1|<\delta_1,|\psi(z_1,z_2)-1|<\delta_2\}$$
and $$S_2=\{z\in\mathbb{D}^2:|z_2-1|<\delta_1,|\psi(z_1,z_2)-1|<\delta_2\}.$$
Now we see that 
$$V_{\beta}(\Phi^{-1}_n(S(-1,1),\tilde{\delta}))\leq V_{\beta}(S_1)+V_{\beta}(S_2),$$
therefore, it suffices for us to prove 
$$V_{\beta}(S_1)\lesssim\delta_1^{\beta+2}\delta_2^{\beta+2} \quad \mathrm{and}\quad V_{\beta}(S_2)\lesssim\delta_1^{\beta+2}\delta_2^{\beta+2}. $$
But this is equivalent to Bayart-Kosi\'nski first order condition holding for the smooth symbols $$\Psi_1=(z_1,\psi(z_1,z_2))\quad \mathrm{and}\quad \Psi_2=(z_2,\psi(z_1,z_2)).$$
The derivative matrices for both $\Psi_1,\Psi_2$ are
$$d\Psi_1=\begin{pmatrix}
1 & 0 \\
\partial_{z_1}\psi & \partial_{z_2}\psi&  
\end{pmatrix}
\quad \mathrm{and} \quad d\Psi_2=\begin{pmatrix}
0 & 1 \\
\partial_{z_1}\psi &\partial_{z_2}\psi  
\end{pmatrix}.
$$
Both matrices are invertible for the relative points in $\mathbb{T}^2$, hence $C_{\Psi_{j}}:A^2_{\beta}(\mathbb{D}^2)\to A^2_{\beta}(\mathbb{D}^2)$ is bounded for $j=1,2.$ On the other hand, this is equivalent to the volume $$V_{\beta}(S_1)\lesssim\delta_1^{2+\beta}\delta_2^{2+\beta} \quad \mathrm{and} \quad V_{\beta}(S_2)\lesssim\delta_1^{2+\beta}\delta_2^{2+\beta}.$$
This proves the sufficiency part.
For the converse, note that if the first order conditions fail even at a smooth point of $\Phi_n$ then boundedness fails too. If the first order condition hold on all points besides the singular point of $\kappa^{(n)},$ then we need to check if boundedness fails whenever at least one of the partial derivatives of $\psi$ is zero. Via the volume estimates we have for $|\kappa^{(n)}+1|<\delta,$ it suffices to assume without losing the generality, that $\partial_{z_2}\psi(1,1)=0.$ Then we will show that the composition operator is not bounded. Consider the slice $g(z)=\psi(1,z),z\in \mathbb{D}.$ The function $g$ is holomorphic on $\mathbb{D}$ and moreover $g(1)=1$ and $g'(1)=0.$ By the Julia-Carath\'eodory Theorem, $g$ has strictly positive angular derivative at the point $1\in\mathbb{T},$ therefore $g$ cannot be non-constant, hence $g(z) \equiv1.$ This in turn implies that $\psi(1,z)\equiv 1.$ Now due to smoothness of $\psi,$ on the set $$A_{\delta}=\{z\in \mathbb{D}^2:|z_1-1|<\delta,|z_2|<1/2\}$$
$$|\psi(z_1,z_2)-1|=|\psi(z_1,z_2)-\psi(1,z_2)|\leq C|z_1-1|,\quad \forall z\in A_{\delta}.$$
By applying Lemma \ref{estimate} we see once again that $A_{\delta}\subset\Phi^{-1}(S((-1,1),C\delta)).$ Calculating the volumes shows non-boundedness of $C_{\Phi},$ implying that the converse is also true, leading to the desired characterization.
\end{proof}
\subsection{Some examples with the functions $\kappa$ and $\phi_{AMY}.$}
Let us at this moment provide a concrete example in which we get non-boundedness of the composition operator for a symbol $\Phi$ induced by the AMY function and $\kappa,$ two of the most well studied RIFs. This is an example to illustrate the \textit{common exceptional line Theorem \ref{exceptionalline}}.
\begin{example} Let $$\Phi=\left(\frac{2z_1z_2-z_1-z_2}{2-z_1-z_2},\frac{4z_1^2z_2-z_1^2-3z_1z_2-z_1+z_2}{4-3z_1-z_2-z_1z_2+z_1^2}\right)\quad (z_1,z_2)\in\mathbb{D}^2.$$ Then $C_{\Phi}:A^2_{\beta}(\mathbb{D}^2)\to A^2_{\beta}(\mathbb{D}^2)$ is not bounded, for all $\beta \ge -1.$
\end{example}
\begin{proof} We borrow some figures from \cite{Clark1} along with their explanatory descriptions. Pay attention to the exceptional value $\alpha=-1$ for both functions. We observe that both level sets contain the same line $z_1=1.$
\begin{center}
\begin{figure}[!hb]
\includegraphics[width=14cm ]{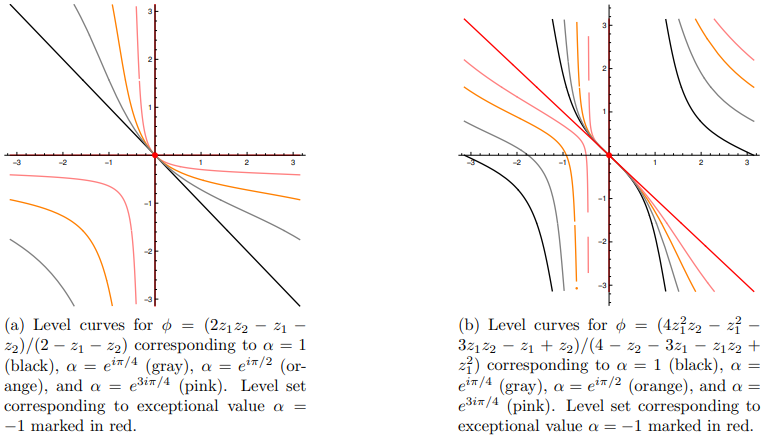}
\end{figure}
\end{center}
Both functions have non-tangential values on their singularity equal to $-1$. Let us first consider the differences $|\kappa(z_1,z_2)+1|$ and $|\phi_{AMY}(z_1,z_2)+1|.$ For the first one, we know $$|\kappa(z_1,z_2)+1|=\left|\frac{2(z_1-1)(z_2-1)}{2-z_1-z_2}\right| \quad (z_1,z_2)\in\mathbb{D}^2.$$ For the AMY function, we know that $\phi_{AMY}(z_1,z_2)=\kappa(z_1,-\kappa(z_1,z_2)).$ Therefore, 
$$|\phi_{AMY}(z_1,z_2)+1|=\left|\frac{2(z_1-1)(1+\kappa(z_1,z_2))}{2-z_1+\kappa(z_1,z_2)}\right|, \quad (z_1,z_2)\in \mathbb{D}^2.$$
On the set $A_{\delta}$ we defined before, we have that there exist constants $C_1,C_2>0$ such that
$$|\kappa(z_1,z_2)+1|\leq C_1|z_1-1|, \quad |\phi_{AMY}(z_1,z_2)+1|\leq C_2|z_1-1|, \quad \forall(z_1,z_2)\in A_{\delta}.$$
This implies that $$A_\delta\subset \Phi^{-1}(S((-1,-1),C\delta)), $$ for $C=\max\{C_1,C_2\}.$ Calculating the volume we see that
$$V_{\beta}(\Phi^{-1}(S((-1,-1),C\delta)))\gtrsim \delta^{2+\beta}, \quad \forall \delta\in (0,1),$$
preventing boundedness of the operator. 
\end{proof}
Let us provide another example, showcasing the \textit{common curve Theorem \ref{exceptionalcurve}}. The curve here that both RIFs have in common on their level set is the line $\gamma=(e^{i\theta},e^{-i\theta})\subset\mathbb{T}^2$
\begin{example} Let $$\Phi=(\phi_{AMY}(z_1,z_2),\phi_{AMY}(z_2,z_1)),\quad (z_1,z_2)\in\mathbb{D}^2.$$ Then $C_{\Phi}:A^2_{\beta}(\mathbb{D}^2)\to A^2_{\beta}(\mathbb{D}^2)$ is not bounded, for all $\beta\ge-1$.
\end{example}
\begin{proof}
$$|\phi_{AMY}(z_1,z_2)+1|=\left|\frac{2(z_1-1)(1+\kappa(z_1,z_2))}{2-z_1+\kappa(z_1,z_2)}\right|=\left|4\frac{(1-z_1)(1-z_1z_2)}{p(z_1,z_2)}\right|, \quad (z_1,z_2)\in \mathbb{D}^2,$$
and
$$|\phi_{AMY}(z_2,z_1)+1|=\left|\frac{2(z_2-1)(1+\kappa(z_2,z_1))}{2-z_2+\kappa(z_2,z_2)}\right|=\left|4\frac{(1-z_2)(1-z_1z_2)}{p(z_2,z_1)}\right|, \quad (z_1,z_2)\in \mathbb{D}^2.$$
Taking the set $A_{\delta}:=\{z\in\mathbb{D}^2: z_1=r\zeta_1, 1-r<\delta,|z_2-\frac{1}{\zeta_1}|<\delta\}$
as in our Theorem, yields volume estimate $\approx \delta^{2\beta+3},$ granting non-boundedness of the corresponding composition operator. 
\end{proof}

The last example is related to Theorem \ref{characterization1}. 

\begin{example} Consider $$\Phi_1=\left(-\frac{2z_1z_2-z_1-z_2}{2-z_1-z_2}, \frac{z_1+2z_2}{3}\right)\quad \mathrm{and}\quad \Phi_2=\left(-\frac{4z_1^2z_2-z_1^2-3z_1z_2-z_1+z_2}{4-3z_1-z_2-z_1z_2+z_1^2},z_1\right).$$ Then $C_{\Phi_1}:A^2_{\beta}(\mathbb{D}^2)\to A^2_{\beta}(\mathbb{D}^2)$ is bounded, while $C_{\Phi_2}:A^2_{\beta}(\mathbb{D}^2)\to A^2_{\beta}(\mathbb{D}^2)$ is not bounded, for all $\beta>-1.$
\end{example}

\begin{proof} Let us begin with $\Phi_1.$ The only relevant points $\zeta\in\mathbb{T}^2$ such that $\Phi(\zeta)\in\mathbb{T}^2$ are the points on the diagonal of $\mathbb{T}^2$, since $\left|\frac{z_1+2z_2}{3}\right|=1$ if and only if $|z_1|=|z_2|=1 $ and $z_1=\lambda z_2$ which holds if and only if $\lambda=1$ Hence, we need to check the first order conditions on the diagonal. For all points on $\Delta\subset\mathbb{T}^2.$ Take $(\eta,\eta)\in\Delta\setminus\{(1,1)\}.$ Then, $$-\kappa(\eta,\eta)=\eta$$ and $\psi(\eta,\eta)=\eta.$ For such points the derivative matrix $d_{\eta}\Phi_1$ satisfies the first order conditions, as $\partial_{z_1}\kappa(\eta,\eta)=\partial_{z_2}\kappa(\eta,\eta)=-\frac{1}{2}$ and $\partial_{z_1}\psi(\eta,\eta)=\frac{1}{3}$ and $\partial_{z_2}\psi=\frac{2}{3}.$
Hence, to finish the proof, one should check if
$$V_{\beta}(\Phi^{-1}(S((1,1),\tilde{\delta})))\lesssim \delta_1^{2+\beta}\delta_2^{2+\beta}.$$
But from Lemma \ref{estimate}, the left hand side of the inequality is bounded by
\begin{multline}
V_{\beta}\Biggl(\Bigg\{\{z\in\mathbb{D}^2:|z_1-1|<\delta_1\}\bigcup\{z\in\mathbb{D}^2:|z_2-1|<\delta_1\}\Bigg\}\bigcap\Bigg\{z\in\mathbb{D}^2:\left|\frac{z_1+2z_2}{3}\right|<\delta_2\Bigg\} \Biggr)\\\lesssim \delta_1^{2+\beta}\delta_2^{2+\beta},
\end{multline}
after a straightforward application of Lemma \ref{estimate}. 

As for the second example, setting $$A_{\delta}=\bigg\{z\in\mathbb{D}^2:|z_1-1|<\delta,|z_2|<1/2\bigg\}$$ we see that $A_{\delta}\subset \Phi^{-1}(S((1,1),\delta))$ and non boundedness follows.

Note here that if the second coordinate was $z_2$ then things might not be the same, as in such a case the $A_{\delta}-$set is not a subset of the reverse image under $\Phi$ of the Carleson box $S((1,1),\delta)$ and we cannot apply the estimate from below as in here.
\end{proof}

\section{Further discussion} There exist plenty of open questions to tackle.\\ 

\textbf{Question 1:} Is Theorem \ref{characterization1} valid for all $\beta>0$ too? \\

\textbf{Question 2:} Is boundedness possible whenever we have tangential intersection of order $M\leq 2K+1$? This question emerges from Theorem \ref{singlulartangentlines} on singular tangential intersection.\\

Our belief is that non boundedness cannot be the expected result, as in the smooth case, if we have tangential order $M=1,$ then our set $A_{\delta}$ has volume bounded from below by $\approx \delta^{2\beta+4},$ which makes the criterion undecided.

\textbf{Question 3:} Is the mixed transversal sufficiency Theorem true for RIF symbols? What about more general self maps with singularities?\\ 

\textbf{Question 4:} All of the non-boundedness results hold for the Hardy space of the bidisc $H^2(\mathbb{D}^2).$ Does our main result hold too?\\

Any of these questions, if answered will lead to an even more thorough understanding of the problem in two variables. We believe that the geometric point of view is quite rich and can provide us with a variety of tools that we can use to generalize even more our results.

\section{Acknowledgements} The author was financially supported by the National
Science Center, Poland, SHENG III, research project 2023/48/Q/ST1/00048.

I would like to thank Alan Sola for the correspondence during the preparation of this work, the detailed comments he provided me with, and the many suggestions on the direction of this project. Moreover, I express my gratitude to Konstantinos Maronikolakis, \L{}ukasz Kosi\'nski and Steph\'ane Charpentier for their comments and offering their time to discuss this note. Moreover, I thank my friend and colleague, Pouriya Torkinejad Ziarati for his valuable insight.

\end{document}
For this we need to show that these regions are disjoint from one another. Distinct branches have difference controlled by $\approx|w_1|^{K},$ where $K$ is their order of contact (this comes from the definition of order of contact, see Section \ref{tools} or, again, paper \cite{Clark}).  Hence, for
 $\delta_1<\varepsilon$ sufficiently small, we claim that the regions $\Omega_{\nu,m,\delta_1}$
   are pairwise disjoint when we fix $\delta_1.$ 
   \begin{proof} \textbf{(Proof of the claim):}
   To see this, by the order of contact of the branches, we know that $$|h^{\alpha,\nu,m}_{\mathbb{H}^2}(w_1)-h^{\alpha,\ell,m}_{\mathbb{H}^2}(w_1)|\approx |w_1|^{\min\{K_\nu,K_\ell\}}.$$
 Note than for the bound from bellow, there exists $C>0$ such that
 $$|h^{\alpha,\nu,m}_{\mathbb{H}^2}(w_1)-h^{\alpha,\ell,m}_{\mathbb{H}^2}(w_1)|\ge C |w_1|^{\min\{K_\nu,K_\ell\}}$$
 Letting now $\delta_1$ become smaller, the regions $\Omega_{\nu,m,\delta_1}$ shrink but they stay away from each other, remaining disjointed under control.
 To see this controlled disjointedness, pick two distinct branches $h^{\alpha,\nu_0,m_0}_{\mathbb{H}^2}(w_1)$ and $h^{\alpha,\nu_1,m_1}_{\mathbb{H}^2}(w_1)$ with contact orders $K_{\nu_0},K_{\nu_1}\ge2.$ Assume on the contrary that they overlap. Then, by triangle inequality
 $$|h^{\alpha,\nu_0,m_0}_{\mathbb{H}^2}(w_1)-h^{\alpha,\nu_1,m_1}_{\mathbb{H}^2}(w_1)|\leq|w_2-h^{\alpha,\nu_0,m_0}_{\mathbb{H}^2}(w_1)|+|w_2-h^{\alpha,\nu_1,m_1}_{\mathbb{H}^2}(w_1)|<\varepsilon(|w_1|^{K_{\nu_0}}+|w_1|^{K_{\nu_1}}).$$
 In a small neighborhood around $(0,0),$ specifically for $\varepsilon<C/2.$ We have that $|w_1|<1,$ hence $|w_1|^{K_{\nu_0}}+|w_1|^{K_{\nu_1}}\leq 2|w_1|^{\min\{K_{\nu_0},K_{\nu_1}\}}.$
 This would imply that 
 $$|h^{\alpha,\nu_0,m_0}_{\mathbb{H}^2}(w_1)-h^{\alpha,\nu_1,m_1}_{\mathbb{H}^2}(w_1)|<2\varepsilon|w_1|^{\min\{K_{\nu_0},K_{\nu_1}\}},$$ which is a contradiction, after our choice of $\varepsilon.$ Hence, after shrinking this regions correctly, we keep them disjoint from each other.
 \end{proof}